\newtheorem{theorem}{Theorem}[section]
\newtheorem{corollary}[theorem]{Corollary}
\newtheorem{lemma}[theorem]{Lemma}
\newtheorem{proposition}[theorem]{Proposition}
\theoremstyle{definition}
\newtheorem{definition}[theorem]{Definition}
\newtheorem{remark}[theorem]{Remark}
\numberwithin{equation}{section}
\DeclareMathOperator{\tr}{tr}
\begin{document}


\baselineskip=17pt


\title
{Interpolation of quasi noncommutative $L_p$-spaces}

\author {Juan Gu}
\address{Department of Mathematics, Harbin Institute of Technology, Harbin 150006, China, and Department of Mathematics, Heilongjiang University of Science and Technology, Harbin, 150022, China}
\email{gujuan-1977@163.com}
\author {Zhi Yin}
\address{Institute of Advanced Study in Mathematics, Harbin Institute of Technology, Harbin 150006, China}
\email{hustyinzhi@163.com}
\author {Haonan Zhang}
\address{Laboratoire de Math\'ematiques, Universit\'e de Franche-Comt\'e, 25030 Besan\c con, France and Institute of Mathematics, Polish Academy of Sciences, ul. \'Sniadeckich 8, 00-656 Warszawa, Poland}
\email{haonan.zhang@edu.univ-fcomte.fr}

\date{\today}

\begin{abstract}
	Let $\mathcal{M}$ be a ($\sigma$-finite) von Neumann algebra associated with a normal faithful state $\phi.$ We prove a complex interpolation result for a couple of two (quasi) Haagerup noncommutative $L_p$-spaces $L_{p_0} (\mathcal{M}, \phi)$ and $L_{p_1} (\mathcal{M}, \phi), 0< p_0 < p_1\leq \infty,$ which has further applications to the sandwiched $p$-R\'{e}nyi divergence.
\end{abstract}

\subjclass[2010]{Primary 46L51; Secondary 46M35, 81P45.}

\keywords{Noncommutative $L_p$-spaces, complex interpolation, quantum relative entropies, data processing inequality.}

\maketitle

\section{Introduction}

Let $\mathcal{M}$ be a semifinite von Neumann algebra equipped with a normal semifinite faithful (n.s.f.) trace $\tau$, and $\Lambda_p(\mathcal{M}, \tau)$ be the tracial noncommutative $L_p$-space associated with $(\mathcal{M}, \tau)$. It is well-known \cite{PX2003} that the complex interpolation space of a couple of two Banach spaces $( \Lambda_{p_0} (\mathcal{M}, \tau), \Lambda_{p_1} (\mathcal{M}, \tau))$ is isometric to $\Lambda_{p_\theta}(\mathcal{M}, \tau),$ i.e.,
\begin{equation}\label{eq:complexinterpolation1}
[\Lambda_{p_0} (\mathcal{M}, \tau), \Lambda_{p_1}(\mathcal{M}, \tau)]_\theta = \Lambda_{p_\theta}(\mathcal{M}, \tau),
\end{equation}
where $1/p_\theta = (1-\theta)/ p_0 + \theta/p_1, 1\leq p_0 < p_1 \leq \infty.$ Through this line of research, the complex interpolation for weighted noncommutative $L_p$-spaces \cite{RX2011}, noncommutative Hardy spaces \cite{P1992, BO2017} and noncommutative Orlicz spaces \cite{BC2012, BCO2016} have been studied. Because noncommutative $L_p$-spaces naturally appear in the study of noncommutative analysis, the interpolation of noncommutative $L_p$-spaces, together with the Riesz-Thorin theorem and Marcinkiewicz theorem, become fundamental and powerful tools in proving noncommutative martingale inequalities \cite{JX2003, PX1997}, maximal ergodic inequalities \cite{JX2007}, and the duality properties of noncommutative function spaces \cite{M2007}.

In the early 1990s, Xu \cite{X1990} showed that the above interpolation result  \eqref{eq:complexinterpolation1} holds for the couple of quasi-Banach spaces $( \Lambda_{p_0}(\mathcal{M}, \tau), \Lambda_{p_1}(\mathcal{M}, \tau)),0< p_0 < p_1 \leq \infty.$ However, it didn't draw too much attention to the community, contrary to the Banach space case $(1\le p_0 < p_1\le \infty)$. In this paper, we will extend Xu's results to a more general setting. Namely, let $\mathcal{M}$ be a $\sigma$-finite von Neumann algebra equipped with a normal faithful (n.f.) state $\phi,$ we would like to pursue a complex interpolation result for a couple of (quasi) noncommutative $L_p$-spaces associated with $(\mathcal{M}, \phi).$

There are two different but equivalent approaches of constructing noncommutative $L_p$-spaces associated with $(\mathcal{M}, \phi).$ One was introduced by Haagerup \cite{H1979}. We denote by $L_p(\mathcal{M}, \phi)$ the Haagerup noncommutative $L_p$-space (see Section \ref{sec:preliminary} for definition). However, it is well-known that for two different indices $p_0$ and $p_1,$ the natural intersection $L_{p_0}(\mathcal{M}, \phi) \cap L_{p_1}(\mathcal{M}, \phi)$ is trivial \cite{T1981}, thus the couple $(L_{p_0}(\mathcal{M}, \phi), L_{p_1}(\mathcal{M}, \phi))$ is not compatible for interpolation. To overcome this difficulty, Kosaki \cite{K1984} introduced another kind of noncommutative $L_p$-space, denoted by $C_p(\mathcal{M}, \phi),1\leq p \leq \infty$. It was defined as the complex interpolation of the couple $(h_\phi^{1/2} \mathcal{M} h_\phi^{1/2}, L_1(\mathcal{M}, \phi))$ (see Section \ref{sec:preliminary} for the definition of $h_\phi$). More precisely, for $1\leq p \le\infty$ we have
$$C_{p}(\mathcal{M}, \phi): = \left[h_\phi^{1/2} \mathcal{M} h_\phi^{1/2}, L_1(\mathcal{M}, \phi) \right]_{\frac{1}{p}},$$
where $C_\infty (\mathcal{M}, \phi) = h_\phi^{1/2} \mathcal{M} h_\phi^{1/2}.$
By the reiteration theorem \cite[Theorem 4.6.1]{BL1976}, one has \cite{K1984}
$$[C_{p_0} (\mathcal{M}, \phi), C_{p_1}(\mathcal{M}, \phi)]_\theta = C_{p_\theta}(\mathcal{M}, \phi),$$
where $\frac{1}{p_{\theta}}=\frac{1-\theta}{p_0}+\frac{\theta}{p_1}$ and $1\leq p_0 < p_1 \leq \infty.$ The crucial point of Kosaki's interpolation is that $C_\infty (\mathcal{M}, \phi)$ naturally embeds into $L_1(\mathcal{M}, \phi),$ which makes $C_\infty (\mathcal{M}, \phi)$ and $L_1(\mathcal{M}, \phi)$ a compatible couple of interpolation. Hence the following Kosaki's embedding $i_p (x) = h_\phi^{\frac{1}{2p}} x h_\phi^{\frac{1}{2p}}, x\in \mathcal{M}$ embeds $\mathcal{M}$ into $L_p(\mathcal{M}, \phi)$ (see Proposition \ref{prop:embed1}). In this way any pair of Haagerup's noncommutative $L_p$-spaces is compatible. 

In this paper, We show that for $0< p_0 < p_1 \leq \infty$,
\begin{equation}
[L_{p_0} (\mathcal{M}, \phi), L_{p_1} (\mathcal{M}, \phi)]_\theta = L_{p_\theta} (\mathcal{M}, \phi),
\end{equation}
where $1/p_\theta = (1-\theta)/p_0 + \theta/p_1$ (see Theorem \ref{thm:interpolation2}). The key idea of our proof is to use Xu's reversed H\"{o}lder's inequality (see Lemma \ref{lem:decomposition} and Corollary \ref{cor:cor of xu's result}) for tracial noncommutative $L_p$-spaces \cite{X1990} and a Hadamard three lines theorem for (quasi) Haagerup noncommutative $L_p$-spaces (see Proposition \ref{prop:three-line-Haagerup}).

Recently, people in the quantum information community introduced a noncommutative $L_p$-space, denoted by $L_{p, \sigma} (H)$ \cite{Beigi2013, OZ1999, KT2013}. Recall that for a fixed positive operator $\sigma \in B(H),$ the space $L_{p, \sigma} (H)$ is defined as the completion of $(B(H), \|\cdot\|_{p, \sigma}),$ where the (quasi) norm $\|\cdot\|_{p, \sigma}$ is defined as
$$\|x\|_{p, \sigma}:=  \left\| \sigma^{\frac{1}{2p}} x \sigma^{\frac{1}{2p}} \right\|_p, ~x \in B(H)$$
for $0 < p < \infty.$ Here $\|\cdot\|_p$ is the Schatten $p$-norm on $B(H).$ In fact, due to the Tomita-Takesaki theory \cite{Takesaki}, $L_{p, \sigma}(H)$ is nothing but a special case of Haagerup noncommutative $L_p$-space. And it was used by Beigi \cite{Beigi2013} to study the data processing inequality (DPI) of the sandwiched $p$-R\'{e}nyi divergence $\tilde{D}_p$ introduced by M\"{u}ller Lennert et al. \cite{MDSFT2013}. This motivates us to define the following generalization of $p$-R\'{e}nyi divergence for $p \in (0, 1) \cup (1, \infty)$:
\begin{equation}
\tilde{D}_p (\psi \| \phi) :  =  \frac{1}{p-1} \log \left\| h_\phi^{\frac{1-p}{2p}} h_\psi h_\phi^{\frac{1-p}{2p}} \right\|^p_{L_p(\mathcal{M}, \phi)},
\end{equation}
where $\phi, \psi$ are two normal faithful states on $\mathcal{M}.$ This definition fits well with the one by Jen\v{c}ov\'a \cite{Jencova2018} (for $1< p < \infty$) and the one by Berta et al. \cite{BST2016} (for $1/2 \leq p< 1$), where they firstly generalized the sandwiched $p$-R\'{e}nyi divergence $\tilde{D}_p$ to the general von Neumann algebra setting. As an application of our interpolation result, we can easily cover Beigi's results by adapting his argument to our setting. Moreover, we also derive a simple sufficient condition for the equality in DPI for all $p \in (0, 1) \cup (1, \infty)$ (see Theorem \ref{thm:sufficient}). We end this introduction by the following remark: We have realized that the complex interpolation for noncommutative $L_p$-spaces has been used in many works about the sandwiched $p$-R\'{e}nyi divergence, see for instance \cite{Beigi2013, BST2016, JRSWW2018}. However, they all concern with the Banach noncommutative $L_p$-spaces $(1\leq p < \infty).$ Our interpolation result for the quasi ones will serve as a great complement to this topic.

Our paper is organized as follows. In Section 2 we summarize necessary preliminaries on tracial noncommutative $L_p$-spaces, Haagerup noncommutative $L_p$-spaces, Haagerup's reduction theorem and complex interpolation of tracial noncommutative $L_p$-spaces. Section 3 is devoted to the Hadamard three lines theorem for Haggerup noncommutative $L_p$-spaces, which is the key ingredient for proving our main interpolation result. We also present a direct application of our Hadamard three lines theorem to matrix inequalities. In Section 4 we show the main result about the complex interpolation. Section 5 presents some applications to the sandwiched $p$-R\'{e}nyi divergence.


\section{Preliminaries}
\label{sec:preliminary}

\subsection{Tracial noncommutative $L_p$-spaces}

In this subsection we concentrate ourselves to noncommutative $L_p$-spaces associated with semifinite von Neumann algebras, which were firstly laid out in the early 1950's by Segal \cite{Segal1953} and Dixmier \cite{Dixmier1953}. Our reference is \cite{PX2003}.

Let $\mathcal{M}$ be a semifinite von Neumann algebra equipped with a n.s.f. trace $\tau$. Denote by $\mathcal{M}_+$ the positive cone of $\mathcal{M}$ and by $\mathcal{S}_+$ the set of all $x \in \mathcal{M}_+$ such that $\tau [\mathrm{supp} (x)] < \infty$, where $\mathrm{supp}(x)$ denotes the support projection of $x$. Let $\mathcal{S}$ be the linear span of $\mathcal{S}_+$. Then $\mathcal{S}$ is a weak*-dense *-subalgebra of $\mathcal{M}$. Given $0<p<\infty$, we define
$$\Vert x\Vert_p:=[\tau(|x|^p)]^{\frac{1}{p}},~~x\in\mathcal{S},$$
where $|x|=(x^*x)^{\frac{1}{2}}$ is the modulus of $x$. One can show that $\|\cdot \|_p$ is a norm on $\mathcal{S}$ if $1 \leq p < \infty,$ and a quasi-norm if $0< p < 1.$ In particular, we have for $p\ge 1$
\begin{equation}\label{eq:minkowski inequality p>1}
\|x+y\|_p\le \|x\|_p+\|y\|_p,~~x,y\in\mathcal{S};
\end{equation}
and for $0<p<1$:
\begin{equation}\label{eq:minkowski inequality p<1}
\|x+y\|^p_p\le \|x\|^p_p+\|y\|^p_p,~~x,y\in\mathcal{S}.
\end{equation}
The completion of $( \mathcal{S}, \|\cdot \|_p )$ is denoted by $\Lambda_p (\mathcal{M}, \tau)$, or simply $\Lambda_p (\mathcal{M})$. It is called \emph{tracial noncommutative $L_p$-space associated with $(\mathcal{M},\tau)$}. As usual, we set $\Lambda_\infty(\mathcal{M},\tau)=\mathcal{M}$ equipped with the operator norm.

The elements of $\Lambda_p(\mathcal{M})$ can be viewed as closed densely defined operators on $H$ ($H$ being the Hilbert space on which $\mathcal{M}$ acts). A linear closed operator $x$ is said to be \textit{affiliated with} $\mathcal{M}$ if it commutes with all unitary elements in $\mathcal{M}'$, i.e., $xu=ux$ for any unitary $u\in\mathcal{M}'$, where $\mathcal{M}'$ denotes the commutant of $\mathcal{M}$. Note that $x$ can be unbounded on $H$. An operator $x$ affiliated with $\mathcal{M}$ is said to be \textit{measurable with respect to $(\mathcal{M},\tau)$}, or simply \textit{measurable} if for any $\varepsilon>0$, there exists a projection $e\in\mathcal{M}$ such that
$$e(H)\subset\mathcal{D}(x) \text{ and }  \tau(e^\perp)\leq\varepsilon,$$
where $e^\perp=1-e$ and $\mathcal{D}(x)$ denotes the domain of $x$. We denote by $L_0(\mathcal{M},\tau)$, or simply $L_0(\mathcal{M})$ the family of measurable operators. For such an operator $x$, we define the \textit{distribution function} of $x$ as
$$\lambda_s(x):=\tau[\chi_{(s,\infty)}(|x|)],~~s\geq0,$$
where $\chi_{(s,\infty)}(|x|)$ is the spectral projection of $|x|$ corresponding to the interval $(s,\infty)$, and define the \textit{generalized singular numbers} of $x$ as
$$\mu_t(x):=\inf\{s>0:\lambda_s(x)< t\},~~t\geq 0.$$
It is easy to check that 
$$\|x\|=\lim\limits_{t\to 0^+}\mu_t(x),$$
and for $0<p<\infty$
\begin{equation}\label{eq:p norm in term of singular numbers}
\|x\|^p_p=\int_{0}^{\infty}\mu_t(x)^p dt.
\end{equation}

\subsection{Haagerup noncommutative $L_p$-spaces}
For Haagerup noncommutative $L_p$-spaces we refer to \cite{T1981}. The Haagerup noncommutative $L_p$-space is defined on a general ($\sigma$-finite) von Neumann algebra $\mathcal{M}$ associated with a n.f. state $\phi$. In fact, the Haagerup noncommutative $L_p$-space can be defined for any n.s.f. weight, but we will consider only the state case.

Let $\mathcal{R}$ denote the crossed product $\mathcal{M} \rtimes_{\sigma^\phi} \mathbb{R},$ where $\{ \sigma_t^\phi\}_{t\in \mathbb{R}}$ is the \emph{modular automorphism group} associated with $\phi$ on $\mathcal{M}$ (for general modular theory see \cite{Takesaki}). 

Recall that if $\mathcal{M}$ acts on a Hilbert space $H$, $\mathcal{R}$ is the von Neumann algebra acting on $L_2 (\mathbb{R}, H )$ generated by the operators $\pi (x), x \in \mathcal{M},$ and $\lambda (s), s \in \mathbb{R},$ such that: for every $\xi \in L_2 (\mathbb{R}, H )$ and $t \in \mathbb{R},$
$$  \pi (x) (\xi) (t) = \sigma_{-t} (x)  \xi (t), ~~  \lambda(s) (\xi) (t) = \xi (t-s).$$
$\pi$ is a normal faithful representation of $\mathcal{M}$ on $L_{2}(\mathbb{R};H)$, so we can identify $\pi(\mathcal{M})$ with $\mathcal{M}$. There is a dual action $\{\hat{\sigma}_t^\phi \}_{t \in \mathbb{R}}$ of $\mathcal{R}$ uniquely given by
$$ \hat{\sigma}_t^\phi (x)=x,~~ \hat{\sigma}_t^\phi (\lambda(s)) = e^{-ist} \lambda(s),$$
for $x \in \mathcal{M}$ and $s, t \in \mathbb{R}.$

It is known that $\mathcal{R}$ is semifinite and there is a canonical n.s.f. trace $\tau$ on $\mathcal{R}$ satisfying
$$\tau \circ \hat{\sigma}_t^\phi = e^{-t} \tau, \; t \in \mathbb{R},$$

\noindent Any normal positive functional $\psi \in \mathcal{M}_*^+$ induces a dual normal semifinite weight $\hat{\psi}$ on $\mathcal{R}$ defined by
\begin{equation}
\hat{\psi} (x) = \psi \left[ \int_{-\infty}^\infty \hat{\sigma}_t^\phi (x) dt \right], \; x \in \mathcal{R}^+.
\end{equation}

\noindent The dual weight $\hat{\psi}$ has a \emph{Radon-Nikodym derivative} with respect to $\tau$, denoted by $h_\psi$:
$$\tau(h_\psi x) = \hat{\psi} (x), \; x \in \mathcal{R}^+.$$
Here $\tau(h_\psi \cdot)$ is understood as $\tau(h_\psi^{\frac{1}{2}}\cdot h_\psi^{\frac{1}{2}})$. In particular, the dual weight $\hat{\phi}$ of our distinguished n.s.f. state $\phi$ has a Radon-Nikodym derivative $h_\phi$ with respect to $\tau$. We will call $h_\phi$ the \emph{density operator} of $\phi$.

Let $L_0(\mathcal{R}, \tau)$ denote the topological $*$-algebra of all operators on $L_2(\mathbb{R}, H)$ which are measurable with respect to $(\mathcal{R}, \tau)$. Then for $0< p \leq \infty$, the Haagerup $L_p$-space is defined as
\begin{equation}\label{eq:defn of Haagerup Lp space}
L_p(\mathcal{M}, \phi) = \{ x \in L_0(\mathcal{R}, \tau): \; \hat{\sigma}_t^\phi (x) = e^{-t/p} x, \; \forall t \in \mathbb{R}\}.
\end{equation}

Recall that for any $\psi \in \mathcal{M}_*^+$, we have $h_\psi \in L_0(\mathcal{R}, \tau)$ and
$$\hat{\sigma}_t^\phi (h_\psi) = e^{-t} h_\psi, \; \forall t \in \mathbb{R}.$$
Thus $h_\psi \in L_1(\mathcal{M}, \phi)_+.$ This correspondence between $\mathcal{M}_*^+$ and $L_1(\mathcal{M}, \phi)_+$ extends to a bijection between $\mathcal{M}_*$ and $L_1(\mathcal{M}, \phi).$ More precisely, for any $\psi \in \mathcal{M}_*,$ if $\psi = u |\psi|$ is its polar decomposition, then we have
$$h_\psi = u h_{|\psi|}.$$
Thus we may define a norm on $L_1 (\mathcal{M}, \phi)$ by
$$\|h_\psi\|_1 := \|\psi\|_{\mathcal{M}_*} = |\psi| (1), \; \psi \in \mathcal{M}_*.$$
Hence $L_1 (\mathcal{M}, \phi) = \mathcal{M}_*$ isometrically. Due to this correspondence, there is a linear functional on $L_1(\mathcal{M}, \phi)$, denoted by $\tr$, given through
$$\tr (x) =  \psi_x (1), \; x \in L_1(\mathcal{M}, \phi),$$
where $\psi_x$ is the unique normal functional associated with $x$ by the identification between $\mathcal{M}_*$ and $L_1(\mathcal{M}, \phi)$ we discussed earlier.

Now for $x \in L_p (\mathcal{M}, \phi), 0< p< \infty,$ we can define
\begin{equation}\label{eq:defn of haagerup lp norm}
\|x\|_p := \||x|^p\|_1^{\frac{1}{p}} = [\tr (|x|^p)]^{\frac{1}{p}}.
\end{equation}
Then $\|\cdot\|_p$ is a norm (quasi-norm if $p<1$) on $L_p(\mathcal{M}, \phi).$
For Haagerup $L_p$-spaces we have the following H\"{o}lder's inequality which will be used frequently: for every $p, q, r > 0$ with $\frac{1}{r} = \frac{1}{p} + \frac{1}{q},$ we have
\begin{equation}\label{eq:holder}
\| x y \|_r \leq \|x\|_p \| y\|_q, \; x \in L_p(\mathcal{M}, \phi), y \in L_q(\mathcal{M}, \phi).
\end{equation}
Moreover, from \eqref{eq:defn of haagerup lp norm} it follows directly that 
\begin{equation}\label{eq:p norm and 2p norm}
\|x\|^{2}_{2p}=\|x^*x\|_{p}=\|xx^*\|_{p},~~x\in L_{2p}(\mathcal{M},\phi).
\end{equation}
Note that from \eqref{eq:defn of Haagerup Lp space}, for different $p_0$ and $p_1$, the intersection of $L_{p_0}(\mathcal{M}, \phi)$ and $L_{p_1}(\mathcal{M}, \phi)$ is trivial.

Let $\mathcal{M}_a$ be the family of \emph{analytic elements} in $\mathcal{M}$. Recall that for any $x \in \mathcal{M}$, it belongs to $\mathcal{M}_a$ iff $t \mapsto \sigma_t^\phi (x)$ extends to an analytic function from $\mathbb{C}$ to $\mathcal{M}$. We will use the following technical lemma proved by Junge and Xu \cite{JX2003}.
\begin{lemma}\label{lem:dense}\cite[Lemma 1.1]{JX2003}.
	Let $0< p < \infty, 0 \leq \eta \leq 1.$ Then:
	\begin{enumerate}[(i)] 
		\item $h_\phi^{\frac{1-\eta}{p}} \mathcal{M}_a h_\phi^{\frac{\eta}{p}} = \mathcal{M}_a h_\phi^{\frac{1}{p}};$
		\item $\mathcal{M}_a h_\phi^{\frac{1}{p}}$ is dense in $L_p (\mathcal{M}, \phi).$
	\end{enumerate}
\end{lemma}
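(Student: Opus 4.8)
The plan is to prove (i) by analytically continuing the modular dynamics implemented by $h_\phi$, and then to deduce (ii) from (i) by a Hahn--Banach argument when $p\ge 1$ and by a factorisation trick when $0<p<1$.

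For (i) recall that $h_\phi$ is a non-singular positive element of $L_0(\mathcal{R},\tau)$ (non-singular because $\phi$ is faithful), that each $h_\phi^{it}$ is unitary, and that conjugation by $h_\phi^{it}$ implements the modular group: $h_\phi^{it}x h_\phi^{-it}=\sigma_t^\phi(x)$ for $x\in\mathcal{M}$, $t\in\mathbb{R}$. If $a\in\mathcal{M}_a$ then $t\mapsto\sigma_t^\phi(a)$ extends to an entire $\mathcal{M}_a$-valued map $z\mapsto\sigma_z^\phi(a)$. Localising the identity above with the spectral projections $\chi_{[1/k,k]}(h_\phi)$ --- which have finite $\tau$-trace, as follows from $\tau\circ\hat\sigma_t^\phi=e^{-t}\tau$ and $\hat\sigma_t^\phi(h_\phi)=e^{-t}h_\phi$ --- and continuing analytically, one obtains inside $L_0(\mathcal{R},\tau)$ the commutation relation $h_\phi^{s}a=\sigma_{-is}^\phi(a)h_\phi^{s}$ for all $s\in\mathbb{R}$ and $a\in\mathcal{M}_a$, together with the additivity $h_\phi^{s_1}h_\phi^{s_2}=h_\phi^{s_1+s_2}$ of powers of the single positive operator $h_\phi$. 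Granting these, for $a\in\mathcal{M}_a$,
\[
h_\phi^{\frac{1-\eta}{p}}a\,h_\phi^{\frac{\eta}{p}}
=\Bigl(h_\phi^{\frac{1-\eta}{p}}a\,h_\phi^{-\frac{1-\eta}{p}}\Bigr)h_\phi^{\frac1p}
=\sigma_{-i(1-\eta)/p}^\phi(a)\,h_\phi^{\frac1p}\in\mathcal{M}_a h_\phi^{\frac1p},
\]
because $\sigma_z^\phi$ preserves $\mathcal{M}_a$; conversely, for $b\in\mathcal{M}_a$ one has $h_\phi^{-\frac{1-\eta}{p}}b\,h_\phi^{\frac{1-\eta}{p}}=\sigma_{i(1-\eta)/p}^\phi(b)\in\mathcal{M}_a$, whence $b\,h_\phi^{\frac1p}=h_\phi^{\frac{1-\eta}{p}}\sigma_{i(1-\eta)/p}^\phi(b)\,h_\phi^{\frac{\eta}{p}}\in h_\phi^{\frac{1-\eta}{p}}\mathcal{M}_a h_\phi^{\frac{\eta}{p}}$. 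These two inclusions give (i).

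For (ii) with $1\le p<\infty$: by (i) with $\eta=\tfrac12$ we have $\mathcal{M}_a h_\phi^{1/p}=h_\phi^{1/2p}\mathcal{M}_a h_\phi^{1/2p}$, so it suffices to prove the latter is dense. Suppose an element $\eta$ of the dual space --- namely $L_{p'}(\mathcal{M},\phi)$ for $p>1$ and $\mathcal{M}$ for $p=1$, both paired with $L_p(\mathcal{M},\phi)$ through $\tr(\cdot\,\cdot)$ --- annihilates $h_\phi^{1/2p}\mathcal{M}_a h_\phi^{1/2p}$. Then $\tr\bigl((h_\phi^{1/2p}\eta h_\phi^{1/2p})a\bigr)=0$ for every $a\in\mathcal{M}_a$, where $h_\phi^{1/2p}\eta h_\phi^{1/2p}\in L_1(\mathcal{M},\phi)=\mathcal{M}_*$ by H\"older's inequality \eqref{eq:holder} and traciality of $\tr$. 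Since $\mathcal{M}_a$ is $\sigma$-strong${}^{*}$-dense, hence weak${}^{*}$-dense, in $\mathcal{M}$, it separates the points of $\mathcal{M}_*$, so $h_\phi^{1/2p}\eta h_\phi^{1/2p}=0$; the non-singularity of $h_\phi$ then forces $\eta=0$. By the Hahn--Banach theorem $\mathcal{M}_a h_\phi^{1/p}$ is dense in $L_p(\mathcal{M},\phi)$.

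For (ii) with $0<p<1$ the dual space is too small for this argument, so we reduce to the Banach case by factorising. Fix an integer $n\ge 1/p$ (so $n\ge 2$) and put $q:=np\ge 1$. Given $\xi\in L_p(\mathcal{M},\phi)$ with polar decomposition $\xi=u|\xi|$, $u\in\mathcal{M}$ a partial isometry, write $\xi=\zeta_1\zeta_2\cdots\zeta_n$ with $\zeta_1=u|\xi|^{1/n}$ and $\zeta_2=\cdots=\zeta_n=|\xi|^{1/n}$; each $\zeta_i$ lies in $L_q(\mathcal{M},\phi)$ since $\bigl\||\xi|^{1/n}\bigr\|_q^q=\|\xi\|_p^p<\infty$. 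By the Banach case just proved, pick $b_i^{(j)}\in\mathcal{M}_a$ with $b_i^{(j)}h_\phi^{1/q}\to\zeta_i$ in $L_q(\mathcal{M},\phi)$ as $j\to\infty$ ($1\le i\le n$). By H\"older's inequality the $n$-fold multiplication $L_q(\mathcal{M},\phi)^{\times n}\to L_p(\mathcal{M},\phi)$ is jointly continuous on bounded sets, so, moving every copy of $h_\phi^{1/q}$ to the right by the commutation relation from (i),
\[
\xi=\prod_{i=1}^{n}\zeta_i
=\lim_{j\to\infty}\prod_{i=1}^{n}\bigl(b_i^{(j)}h_\phi^{1/q}\bigr)
=\lim_{j\to\infty}\Bigl(b_1^{(j)}\prod_{k=1}^{n-1}\sigma_{-ik/q}^{\phi}\bigl(b_{k+1}^{(j)}\bigr)\Bigr)h_\phi^{1/p},
\]
and the element in the outer parentheses, being a finite product of analytic elements, lies in $\mathcal{M}_a$. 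Hence $\xi\in\overline{\mathcal{M}_a h_\phi^{1/p}}$, which proves (ii).

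The functional-analytic bookkeeping in (ii) is routine; the \emph{main obstacle} is (i), where one must make rigorous and then manipulate complex powers of the unbounded non-singular operator $h_\phi$ inside $L_0(\mathcal{R},\tau)$ --- the analytic continuation of $t\mapsto h_\phi^{it}a h_\phi^{-it}$ and the limiting argument controlled by the finite-trace spectral projections of $h_\phi$ --- which is exactly where the Tomita--Takesaki machinery and Haagerup's construction are needed. A secondary point is the failure, for $0<p<1$, of the Hahn--Banach separation (there being no useful dual space), which is what forces the factorisation of elements of $L_p(\mathcal{M},\phi)$ into products of elements of $L_q(\mathcal{M},\phi)$ with $q\ge 1$.
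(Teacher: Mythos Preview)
The paper does not actually prove this lemma: it is quoted from \cite{JX2003}, and the only argument the paper supplies is the single identity
\[
x\,h_\phi^{1/p}=h_\phi^{(1-\eta)/p}\,\sigma^\phi_{i(1-\eta)/p}(x)\,h_\phi^{\eta/p},\qquad x\in\mathcal{M}_a,
\]
as a hint for (i). Your proof of (i) is precisely this identity (with the inverse substitution $a=\sigma^\phi_{i(1-\eta)/p}(x)$), obtained by analytically continuing the implementing relation $h_\phi^{it}x h_\phi^{-it}=\sigma_t^\phi(x)$; so on (i) you and the paper agree exactly.

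For (ii) the paper gives nothing beyond the citation, while you supply a complete argument: a Hahn--Banach/duality proof for $p\ge 1$ using that $\mathcal{M}_a$ is weak$^*$-dense in $\mathcal{M}$ and hence separates $L_1(\mathcal{M},\phi)=\mathcal{M}_*$, followed by a factorisation $\xi=\zeta_1\cdots\zeta_n$ into $L_{np}$-pieces to reduce $0<p<1$ to the Banach case. Both steps are correct and are in fact the standard route (and essentially the argument in \cite{JX2003}). Two minor remarks: you reuse the symbol $\eta$ for the annihilating dual element, clashing with the parameter $\eta$ in the statement, which you may want to rename; and in the last display the index in $\sigma^\phi_{-ik/q}$ should perhaps be written so that it is clear the commutation relation is applied successively ($k$ copies of $h_\phi^{1/q}$ have been moved past $b_{k+1}$), but the computation is right.
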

We note that here $h_\phi^{\frac{1-\eta}{p}} \mathcal{M}_a h_\phi^{\frac{\eta}{p}} = \mathcal{M}_a h_\phi^{\frac{1}{p}}$ follows from
\begin{equation}\label{eq:proof of lem of Junge-Xu}
xh_\phi^{\frac{1}{p}} = h_\phi^{\frac{1-\eta}{p}} \sigma^\phi_{ \frac{i(1-\eta)}{p}} (x) h_\phi^{\frac{\eta}{p}}, ~~x\in \mathcal{M}_a.
\end{equation}

Now we mention two basic facts about Haagerup $L_p$-spaces. The first is, when $\mathcal{M}$ is a semifinite von Neumann algebra associated with a n.f. tracial state $\tau$, Haagerup $L_p$-space $L_p(\mathcal{M},\tau)$ and tracial $L_p$-space $\Lambda_p(\mathcal{M},\tau)$ can be isometrically identified \cite[Chapter II]{T1981}. In fact, in this case, $\mathcal{M} \rtimes_{\sigma^\tau} \mathbb{R} = \mathcal{M} \overline{\otimes} L_\infty(\mathbb{R})$ and the density operator of $\tau$ is: $h_\tau = id \otimes \exp (\cdot).$ For each $x\in L_p (\mathcal{M}, \tau)$ there exists unique $x'\in \Lambda_p(\mathcal{M}, \tau)$ such that $x=x'\otimes \exp \left( \cdot/p \right)$. Moreover, $\|x\|_{L_p(\mathcal{M},\tau)}=\|x'\|_{\Lambda_p(\mathcal{M},\tau)}$. Thus the map $x\mapsto x'$ yields an isometry between $L_p(\mathcal{M},\tau)$ and $\Lambda_p(\mathcal{M},\tau)$.

The second well-known fact is that Haagerup $L_p$-spaces $L_p(\mathcal{M}, \phi)$ are independent of the choice of state $\phi$ up to isometry \cite[Chapter II]{T1981}. Namely, suppose that $\varphi_0,\varphi_1$ are two n.f. states on a von Neumann algebra $\mathcal{M}$. Recall that $\mathcal{R}_{i}:=\mathcal{M}\rtimes_{\sigma^{\varphi_{i}}}\mathbb{R}$ is generated by $\pi_{i}(x),~x\in\mathcal{M}$ and $\lambda(s),~s\in\mathbb{R}$, where
\[
\pi_{i}(x)\xi(t)=\sigma^{\varphi_{i}}_{-t}(x)\xi(t),~\lambda(s)\xi(t)=\xi(t-s),~~\xi\in L_{2}(\mathbb{R},H),~s,t\in\mathbb{R},
\]
for $i=0,1$. Then there is an isometry $\kappa:L_p(\mathcal{M}, \varphi_0)\to L_p(\mathcal{M}, \varphi_1)$ such that \cite[Theorem 37]{T1981}
$$\kappa[\pi_0(x)]=\pi_1(x),~~\kappa[\lambda(t)]=\pi_0(u_t^*)\lambda(t),~~t\in\mathbb{R},x\in\mathcal{M},$$ 
where $u_t=(h_{\varphi_{1}}:h_{\varphi_{0}})_t$ is the \emph{Radon-Nikodym cocycle} of $\varphi_{1}$ relative to $\varphi_{0}$:
\[
u_{s+t}=u_{s}\sigma^{\varphi_{0}}_{t}(u_{t}),~~\sigma^{\varphi_{1}}_{t}(x)=u_{t}\sigma^{\varphi_{0}}_{t}(x)u^{*}_{t},
\]
where $s,t\in\mathbb{R}$, $x\in\mathcal{M}$, and $h_{\varphi_i}$ is the density operator of $\varphi_i$, $i=0,1$. In particular, if $\varphi_1=\varphi_0(h\cdot)$, then $u_t=h^{it}$. Thus we have $\kappa[\pi_0(h) h_{\varphi_0}]=h_{\varphi_1}$.

Then combining the above two facts, we obtain 
\begin{lemma}\label{lem:isometry}
	Let $\mathcal{M}$ be a von Neumann algebra associated with a n.f. tracial state $\tau$. Let $\phi$ and $\psi$ be two n.f. states on $\mathcal{M}$ defined by:
	\begin{equation*}
	\psi (x): = \tau (\rho x),~\phi(x):=\tau(\sigma x),~ x \in \mathcal{M},
	\end{equation*}
	where $\rho,\sigma \geq 0$ and $\tau (\rho)=\tau(\sigma)=1.$ Then there exist an isometry $\theta:L_p(\mathcal{M}, \tau) \to L_p(\mathcal{M}, \phi)$ and an isometry $\theta':\Lambda_p(\mathcal{M}, \tau) \to L_p(\mathcal{M}, \phi)$ such that
	\begin{equation}\label{eq:eq 1 in lem on isometry}
	h_{\psi}
	=\theta[h_{\tau}\pi_{\sigma^{\tau}}(\rho)]
	=\theta'(\rho),
	\end{equation}
	where $h_\psi$ is the Radon-Nikodym derivative of $\psi$ with respect to the canonical trace $\tau^\phi$ on $\mathcal{M}\rtimes_{\sigma^\phi}\mathbb{R}$ and $h_\tau$ is the density operator of $\tau$. In particular, we have
	\begin{equation}\label{eq:eq 2 in lem on isometry}
	h_{\phi}
	=\theta[h_{\tau}\pi_{\sigma^{\tau}}(\sigma)]
	=\theta'(\sigma).
	\end{equation}
\end{lemma}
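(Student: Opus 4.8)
The plan is to obtain $\theta$ and $\theta'$ simply by composing the two isometries recalled immediately before the statement, and then to read off the two displayed identities.

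First I would invoke the tracial identification recalled above: the assignment $x'\mapsto x'\otimes\exp(\cdot/p)$ is an isometry $j_p\colon\Lambda_p(\mathcal{M},\tau)\to L_p(\mathcal{M},\tau)$. In the tracial crossed product $\mathcal{M}\rtimes_{\sigma^\tau}\mathbb{R}=\mathcal{M}\,\overline{\otimes}\,L_\infty(\mathbb{R})$ one has $h_\tau=\mathrm{id}\otimes\exp(\cdot)$ and $\pi_{\sigma^\tau}(a)=a\otimes 1$, and these two operators commute; hence $j_1(\rho)=\rho\otimes\exp(\cdot)=h_\tau\pi_{\sigma^\tau}(\rho)=\pi_{\sigma^\tau}(\rho)h_\tau$, and likewise $j_1(\sigma)=h_\tau\pi_{\sigma^\tau}(\sigma)$. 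Moreover $h_\tau\pi_{\sigma^\tau}(\rho)$ is nothing but the Haagerup density of the functional $\psi=\tau(\rho\,\cdot\,)$ computed relative to $\tau$, and $h_\tau\pi_{\sigma^\tau}(\sigma)$ is the Haagerup density of $\phi=\tau(\sigma\,\cdot\,)$ relative to $\tau$.

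Next I would apply the second recalled fact with $\varphi_0=\tau$ and $\varphi_1=\phi$. Since $\phi=\tau(\sigma\,\cdot\,)$ has the form $\varphi_0(h\,\cdot\,)$ with $h=\sigma$, the Radon--Nikodym cocycle is $u_t=\sigma^{it}$, and one obtains an isometry $\kappa\colon L_p(\mathcal{M},\tau)\to L_p(\mathcal{M},\phi)$ with $\kappa[\pi_\tau(x)]=\pi_\phi(x)$ for $x\in\mathcal{M}$ and, as the quoted special case, $\kappa[h_\tau\pi_{\sigma^\tau}(\sigma)]=\kappa[\pi_\tau(\sigma)h_\tau]=h_\phi$. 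I then set $\theta:=\kappa$ and $\theta':=\kappa\circ j_p$; both are isometries of the required type, and \eqref{eq:eq 2 in lem on isometry} follows at once from the previous sentence together with $j_1(\sigma)=h_\tau\pi_{\sigma^\tau}(\sigma)$.

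It remains to establish \eqref{eq:eq 1 in lem on isometry}, i.e. $\kappa[h_\tau\pi_{\sigma^\tau}(\rho)]=h_\psi$, and this is where I expect the only real obstacle. The point is that the isometry $\kappa$ of \cite[Theorem 37]{T1981} does more than move the distinguished vector: it extends to a $*$-isomorphism $\mathcal{M}\rtimes_{\sigma^\tau}\mathbb{R}\to\mathcal{M}\rtimes_{\sigma^\phi}\mathbb{R}$ intertwining the two canonical traces — one verifies this on the generators using the cocycle relation for $u_t$ — and hence to a trace-preserving $*$-isomorphism between the corresponding algebras of measurable operators. Being multiplicative and trace-preserving, it transports, for every normal functional $\omega\in\mathcal{M}_*$, the Haagerup density of $\omega$ relative to $\tau$ to the Haagerup density of $\omega$ relative to $\tau^\phi$; equivalently, $\kappa$ respects the canonical identifications $L_1(\mathcal{M},\tau)\cong\mathcal{M}_*\cong L_1(\mathcal{M},\phi)$. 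Applying this with $\omega=\psi$ and using the first paragraph yields $\kappa[h_\tau\pi_{\sigma^\tau}(\rho)]=h_\psi$, so that $\theta[h_\tau\pi_{\sigma^\tau}(\rho)]=\theta'(\rho)=h_\psi$, while $\omega=\phi$ recovers \eqref{eq:eq 2 in lem on isometry}. Thus the delicate point is precisely checking that the change-of-state isometry transports the Haagerup densities of \emph{all} normal functionals, not just that of the distinguished state $\phi$ handled by the quoted special case — this is exactly the mechanism that renders Haagerup's $L_p$-spaces independent of the state, and once it is in hand everything else is routine bookkeeping with the two recalled facts.
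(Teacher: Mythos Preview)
Your argument is correct, and for \eqref{eq:eq 2 in lem on isometry} it coincides with the paper's: both simply compose the tracial identification $j_p$ with the change-of-state isometry $\kappa$ and invoke the quoted special case $\kappa[\pi_\tau(\sigma)h_\tau]=h_\phi$.

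For \eqref{eq:eq 1 in lem on isometry} the two proofs diverge. You argue structurally: $\kappa$ is induced by a trace-preserving $*$-isomorphism of the crossed products, hence it carries the Haagerup density of \emph{every} $\omega\in\mathcal{M}_*$ computed relative to $\tau$ to the one computed relative to $\phi$; specializing to $\omega=\psi$ gives the result. The paper instead reduces \eqref{eq:eq 1 in lem on isometry} to the already-proved \eqref{eq:eq 2 in lem on isometry} by a direct algebraic manipulation: it writes $\psi=\phi(x^*\,\cdot\,x)$ with $x=\rho^{1/2}\sigma^{-1/2}$, invokes the identity $h_{\phi(x^*\cdot x)}=xh_\phi x^*$ from \cite[Chapter~II, Proposition~4]{T1981}, and then uses the multiplicativity of $\kappa$ together with the commutativity in the tracial crossed product to simplify $\pi_\tau(x)\,h_\tau\pi_\tau(\sigma)\,\pi_\tau(x^*)$ to $h_\tau\pi_\tau(\rho)$. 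Your route is more conceptual and immediately explains why the result holds for \emph{any} $\psi$, not just those of the form $\phi(x^*\cdot x)$; the paper's route avoids having to articulate the general density-transport property and instead leverages one concrete formula, at the price of the (harmless here, since $\phi$ is faithful) appearance of $\sigma^{-1/2}$. Both ultimately rest on the same multiplicativity of $\kappa$, so neither is deeper than the other.
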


\begin{proof}
	The equation \eqref{eq:eq 2 in lem on isometry} is a direct consequence of two facts as above. Note that $\psi=\phi(x^*\cdot x)$, with $x=\rho^{\frac{1}{2}}\sigma^{-\frac{1}{2}}$. Then \eqref{eq:eq 1 in lem on isometry} follows immediately from \eqref{eq:eq 2 in lem on isometry} and the fact \cite[Chapter II, Proposition 4]{T1981} that $h_{\phi(x^*\cdot x)}=x h_\phi x^*$.
\end{proof}

Using the above lemma, we can show that the noncommtuative $L_p$-space $L_{p, \sigma}(H)$ is a Haagerup $L_p$-space. 
\begin{definition}
	Let $\mathcal{M}$ be a $\sigma$-finite von Neumann algebra equipped with a n.f. state $\phi.$ For $0< p < \infty,$ define a (quasi) norm $\|\cdot\|_{p, \phi}$ on $\mathcal{M}$ by
	$$\|x\|_{p, \phi} : = \left\| h_\phi^{\frac{1}{2p}} x h_\phi^{\frac{1}{2p}} \right\|_{L_p(\mathcal{M}, \phi)},  \; x\in \mathcal{M}.$$
\end{definition}

Recall \cite{Beigi2013} that for a fixed positive operator $\sigma \in B(H),$ the space $L_{p, \sigma} (H)$ is defined as the completion of $(B(H), \|\cdot\|_{p, \sigma}),$ where the (quasi) norm $\|\cdot\|_{p, \sigma}$ is defined by
$$\|x\|_{p, \sigma} =  \left\| \sigma^{\frac{1}{2p}} x \sigma^{\frac{1}{2p}} \right\|_p, x \in B(H)$$
for $0 < p < \infty.$ Here $\|\cdot\|_p$ is the Schatten $p$-norm on $B(H).$

\begin{proposition}\label{prop:fit}
	Let $\mathcal{M}$ be a finite von Neumann algebra with a n.f. tracial state $\tau$, and $\phi$ be a n.f. state on $\mathcal{M}$ defined by $\phi=\tau(\sigma\cdot)$, where $\sigma\ge 0$ and $\tau(\sigma)=1$. Then
	\begin{equation}
	\|x\|_{p, \phi}=\left[\tau \left(\left|\sigma^{\frac{1}{2p}} x \sigma^{\frac{1}{2p}}\right|^p \right)\right]^{\frac{1}{p}},~ x \in \mathcal{M}
	\end{equation}
	for $0 < p < \infty.$
\end{proposition}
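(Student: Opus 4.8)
The plan is to reduce the claimed identity to the isometry $\theta'\colon\Lambda_p(\mathcal M,\tau)\to L_p(\mathcal M,\phi)$ from Lemma~\ref{lem:isometry}, applied to the element $\sigma^{1/2p}x\sigma^{1/2p}\in\mathcal M$. Concretely, fix $x\in\mathcal M$ and $0<p<\infty$. By definition $\|x\|_{p,\phi}=\bigl\|h_\phi^{1/2p}\,x\,h_\phi^{1/2p}\bigr\|_{L_p(\mathcal M,\phi)}$, so the first task is to express $h_\phi^{1/2p}\,x\,h_\phi^{1/2p}$ in terms of $\theta'$. Since $\phi=\tau(\sigma\cdot)$ with $\tau(\sigma)=1$, equation~\eqref{eq:eq 2 in lem on isometry} gives $h_\phi=\theta'(\sigma)$; because $\theta'$ comes (via the two structural facts recalled before Lemma~\ref{lem:isometry}) from an algebra $*$-homomorphism intertwining the tracial $L_p$-calculus with the Haagerup one, it respects the relevant products and powers, so $h_\phi^{1/2p}=\theta'(\sigma^{1/2p})$ and hence
\[
h_\phi^{1/2p}\,x\,h_\phi^{1/2p}=\theta'\!\left(\sigma^{1/2p}\,x\,\sigma^{1/2p}\right).
\]
Here one must be slightly careful: $\theta'$ is only defined on $\Lambda_p(\mathcal M,\tau)$, not as an algebra map, so the multiplicativity step has to be phrased correctly — I would argue it by first noting that on $\mathcal M=\Lambda_\infty$ the map is the composition of the identification $\mathcal M=\Lambda_\infty(\mathcal M,\tau)\hookrightarrow L_\infty(\mathcal M,\tau)\xrightarrow{\ \theta\ }L_\infty(\mathcal M,\phi)$ with $\theta=\kappa\circ(\text{tracial}=\text{Haagerup identification})$, each of which is a normal $*$-homomorphism on the von Neumann algebra level, so the product $\sigma^{1/2p}x\sigma^{1/2p}$ (an element of $\mathcal M$) is sent to the product of the images.

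Granting this, the result follows by applying the isometry property of $\theta'$:
\[
\|x\|_{p,\phi}=\bigl\|\theta'(\sigma^{1/2p}x\sigma^{1/2p})\bigr\|_{L_p(\mathcal M,\phi)}=\bigl\|\sigma^{1/2p}x\sigma^{1/2p}\bigr\|_{\Lambda_p(\mathcal M,\tau)}=\Bigl[\tau\bigl(\bigl|\sigma^{1/2p}x\sigma^{1/2p}\bigr|^p\bigr)\Bigr]^{1/p},
\]
where the last equality is just the definition of the tracial $L_p$-quasinorm. This is the desired formula. An equivalent route, which I would mention as a remark or use as a cross-check, is via Lemma~\ref{lem:isometry} in its other form: write $\phi=\tau(\sigma\cdot)$, use $h_\phi=\theta[h_\tau\pi_{\sigma^\tau}(\sigma)]$ together with the fact $h_{\varphi(y^*\cdot y)}=y h_\varphi y^*$ to track how conjugation by $x$ sits inside the Haagerup picture; but the $\theta'$-route is cleaner because it lands directly in the tracial space where the norm is computed by an honest trace.

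The main obstacle is the justification of $h_\phi^{1/2p}=\theta'(\sigma^{1/2p})$ and of the multiplicativity $h_\phi^{1/2p}x h_\phi^{1/2p}=\theta'(\sigma^{1/2p}x\sigma^{1/2p})$: the map $\theta'$ is stated in Lemma~\ref{lem:isometry} only as an isometry of quasi-Banach spaces satisfying $h_\phi=\theta'(\sigma)$, so one has to unwind its construction to see that it behaves multiplicatively on the dense subalgebra generated by $\mathcal M$ and fractional powers of $\sigma$. Concretely this amounts to checking that under the chain of identifications (tracial $L_p$ with Haagerup $L_p$ for the tracial state $\tau$, then Haagerup's state-independence isometry $\kappa$ for passing from $\tau$ to $\phi$) the element $\sigma^{1/2p}x\sigma^{1/2p}$, viewed as a bounded operator times appropriate powers of the density $h_\tau=\mathrm{id}\otimes\exp(\cdot)$, maps to $h_\phi^{1/2p}x h_\phi^{1/2p}$; this is a bookkeeping exercise with the explicit formulas for $h_\tau$, $\kappa$, and the cocycle $u_t=h^{it}$ recalled before Lemma~\ref{lem:isometry}, and I expect it to go through without surprises, the only genuine care needed being that $\sigma$ (hence $\sigma^{1/2p}$) lies in $\mathcal M$ so all the fractional powers are honest elements of the algebra and no unbounded-operator subtleties arise.
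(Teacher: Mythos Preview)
Your proposal is correct and follows exactly the same route as the paper: invoke Lemma~\ref{lem:isometry} to write $h_\phi^{1/2p} x h_\phi^{1/2p}=\theta'(\sigma^{1/2p}x\sigma^{1/2p})$, then use that $\theta'$ is an isometry onto $L_p(\mathcal M,\phi)$. In fact you are more careful than the paper, which simply asserts the displayed identity without spelling out the multiplicativity/functional-calculus justification that you flag and outline.
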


\begin{proof}
	Using Lemma \ref{lem:isometry}, we have
	\begin{equation*}
	\begin{split}
	\left\| h_\phi^{\frac{1}{2p}} x h_\phi^{\frac{1}{2p}} \right\|^p_{L_p(\mathcal{M}, \phi)} 
	& = \left\|\theta' \left( \sigma^{\frac{1}{2p}}  x \sigma^{\frac{1}{2p}} \right) \right\|^p_{L_p(\mathcal{M}, \phi)} \\
	& = \left\| \sigma^{\frac{1}{2p}}  x \sigma^{\frac{1}{2p}} \right\|^p_{\Lambda_p(\mathcal{M}, \tau)}\\
	& = \tau \left( \left| \sigma^{\frac{1}{2p}} x \sigma^{\frac{1}{2p}} \right|^p \right),
	\end{split}
	\end{equation*}
	where $\theta'$ is the isometry from $\Lambda_p(\mathcal{M}, \tau)$ to $L_p(\mathcal{M}, \phi)$.
\end{proof}

\subsection{Haagerup's reduction theorem}
The idea of the Haagerup reduction theorem is to approximate a general von Neumann algebra by finite ones. It was firstly proposed by Haagerup and published in \cite{HJX2010} after amending more details and applications to noncommutative martingale inequalities and maximal inequalities. In this paper, we will use the Haagerup reduction theorem for noncommutative $L_p$-spaces, which briefly states that a Haagerup $L_p$-space associated with a general von Neumann algebra can be approximated by a sequence of tracial $L_p$-spaces.

Let $G$ denotes the subgroup $\bigcup_{n \geq 1} 2^{-n} \mathbb{Z}$ of $\mathbb{R}.$ Let $\mathcal{M}$ be a $\sigma$-finite von Neumann algebra associated with a normal faithful state $\phi.$ Denote $\mathcal{R}: = \mathcal{M} \rtimes_{\sigma^\phi} G.$ Then we have the following reduction theorem.

\begin{theorem}\label{thm:reduction}\cite[Theorem 3.1]{HJX2010}. For $0< p < \infty,$ let $L_p(\mathcal{M}, \phi)$ be the Haagerup noncommutative $L_p$-space. Then there exist a sequence $\{\mathcal{R}_n\}_{n \geq 1}$ of finite von Neumann algebras, each equipped with a normal faithful finite trace $\tau_n,$ and for each $n \geq 1$ an isometric embedding $\theta_n: \Lambda_p(\mathcal{R}_n, \tau_n) \to L_p (\mathcal{R}, \hat{\phi})$ such that
	\begin{enumerate}[(i)] 
		\item the sequence $\{ \theta_n [\Lambda_p (\mathcal{R}_n, \tau_n) ] \}_{n\geq 1}$ is increasing;
		\item $\bigcup_{n\geq1}  \theta_n [\Lambda_p (\mathcal{R}_n, \tau_n) ]$ is dense in  $L_p (\mathcal{R}, \hat{\phi})$;
		\item $L_p(\mathcal{M}, \phi)$ is isometric to a subspace $Y_p$ of $L_p (\mathcal{R}, \hat{\phi});$
		\item $Y_p$ and all $\theta_n [\Lambda_p (\mathcal{R}_n, \tau_n) ]$ are 1-complemented in $L_p (\mathcal{R}, \hat{\phi})$ for $1\leq p < \infty.$
	\end{enumerate}
	Here $\Lambda_p (\mathcal{R}_n, \tau_n)$ is the tracial noncommutative $L_p$-space associated with $(\mathcal{R}_n, \tau_n).$
\end{theorem}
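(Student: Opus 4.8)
The plan is to carry out the reduction first at the level of von Neumann algebras — producing the finite subalgebras $\mathcal{R}_n$ inside $\mathcal{R}=\mathcal{M}\rtimes_{\sigma^\phi}G$ together with suitable conditional expectations — and only then transport everything to the $L_p$-spaces via the functoriality of Haagerup's construction. For the first part, note that since $G=\bigcup_{n\ge1}2^{-n}\mathbb{Z}$ is countable and discrete, the dual weight $\hat\phi$ on $\mathcal{R}$ is in fact a normal faithful \emph{state} restricting to $\phi$ on $\mathcal{M}$, and its modular group acts by $\sigma^{\hat\phi}_t|_{\mathcal{M}}=\sigma^\phi_t$ and $\sigma^{\hat\phi}_t(\lambda(g))=\lambda(g)$; in particular $\sigma^{\hat\phi}_t(\mathcal{M})=\mathcal{M}$, so Takesaki's theorem provides the canonical normal $\hat\phi$-preserving conditional expectation $\mathcal{E}\colon\mathcal{R}\to\mathcal{M}$ coming from the discrete crossed product.

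\emph{The finite subalgebras.} The abelian algebra $L(G)$ generated by $\{\lambda(g)\}_{g\in G}$ lies in the centralizer of $\hat\phi$. For each $n$ choose a bounded self-adjoint $b_n\in L(G)$ with $e^{i2^{-n}b_n}=\lambda(2^{-n})^{*}$ (a bounded branch of $-\mathrm{arg}$ applied to the unitary $\lambda(2^{-n})$) and let $\phi_n$ be the state obtained from $\hat\phi$ by Connes' perturbation by $b_n$, characterized by $(D\phi_n:D\hat\phi)_t=e^{itb_n}$, so that $\sigma^{\phi_n}_t=\mathrm{Ad}(e^{itb_n})\circ\sigma^{\hat\phi}_t$. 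Using $\lambda(2^{-n})x\lambda(2^{-n})^{*}=\sigma^{\phi}_{2^{-n}}(x)$ for $x\in\mathcal{M}$ together with $e^{i2^{-n}b_n}\lambda(2^{-n})=1$, one computes that $\sigma^{\phi_n}_{2^{-n}}=\mathrm{id}$ on $\mathcal{M}$ and on every $\lambda(g)$, hence $\sigma^{\phi_n}$ is $2^{-n}$-periodic on all of $\mathcal{R}$. Set $\mathcal{R}_n:=\mathcal{R}_{\phi_n}$, the centralizer of $\phi_n$; then $\tau_n:=\phi_n|_{\mathcal{R}_n}$ is a normal faithful tracial state, so $\mathcal{R}_n$ is \emph{finite}. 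Since $b_n\in L(G)\subseteq\mathcal{R}_n$, one also has $\sigma^{\hat\phi}_t(\mathcal{R}_n)=\mathcal{R}_n$, so there is a unique normal $\hat\phi$-preserving conditional expectation $\mathcal{E}_n\colon\mathcal{R}\to\mathcal{R}_n$, and for $m\le n$ these satisfy the tower property $\mathcal{E}_m\mathcal{E}_n=\mathcal{E}_m$. With the $b_n$ chosen \emph{coherently} (the delicate point, see below) one arranges $\mathcal{R}_n\subseteq\mathcal{R}_{n+1}$, and because $\bigcup_n2^{-n}\mathbb{Z}$ is dense in $\mathbb{R}$ the obstruction to periodicity of $\sigma^{\phi}$ disappears in the limit, yielding $\overline{\bigcup_n\mathcal{R}_n}^{\,w^{*}}=\mathcal{R}$.

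\emph{Transport to $L_p$.} Three standard facts about Haagerup spaces then finish the proof for every $0<p\le\infty$: (a) $L_p(\mathcal{N},\psi)$ is, up to a canonical isometry, independent of the normal faithful state $\psi$; (b) a normal $\psi$-preserving conditional expectation onto a subalgebra induces an isometric embedding of the corresponding Haagerup $L_p$-spaces, which is moreover $1$-complemented when $1\le p<\infty$; (c) for a finite algebra with a normal faithful tracial state the Haagerup and the tracial $L_p$-spaces are isometric (Section~\ref{sec:preliminary}). Composing the identification (c) for $(\mathcal{R}_n,\tau_n)$, the isometry (a) carrying $\tau_n=\phi_n|_{\mathcal{R}_n}$ to $\hat\phi|_{\mathcal{R}_n}$, and the embedding (b) attached to $\mathcal{E}_n$, we obtain isometric embeddings $\theta_n\colon\Lambda_p(\mathcal{R}_n,\tau_n)\hookrightarrow L_p(\mathcal{R},\hat\phi)$ whose range is the $L_p$-range of $\mathcal{E}_n$; the tower property of the $\mathcal{E}_n$ gives the increasing property (i), while (b) applied to $\mathcal{E}\colon\mathcal{R}\to\mathcal{M}$ realizes $L_p(\mathcal{M},\phi)$ isometrically as a subspace $Y_p$, which is (iii). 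The $1$-complementation clause in (b) gives (iv) for $1\le p<\infty$. Finally, for (ii): when $1\le p<\infty$ the $L_p$-extensions $E_n$ of $\mathcal{E}_n$ form a martingale, hence converge strongly to the identity since $\overline{\bigcup_n\mathcal{R}_n}^{\,w^{*}}=\mathcal{R}$, so $\bigcup_n\theta_n[\Lambda_p(\mathcal{R}_n,\tau_n)]$ is dense; when $0<p<1$, where $E_n$ need not be bounded, density is obtained directly by approximating a general element of $L_p(\mathcal{R},\hat\phi)$ by elements of $\mathcal{R}_a\,h_{\hat\phi}^{1/p}$ (cf. Lemma~\ref{lem:dense}) and then pushing the analytic factor into $\bigcup_n\mathcal{R}_n$ using its weak$^{*}$ density.

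\emph{Main obstacle.} The only genuinely delicate step is the coherent inductive choice of the perturbing elements $b_n$: they must be selected so that not merely each $\sigma^{\phi_n}$ is periodic but the centralizers nest, $\mathcal{R}_n\subseteq\mathcal{R}_{n+1}$, and $\bigcup_n\mathcal{R}_n$ is weak$^{*}$ dense in $\mathcal{R}$ — this is precisely the content of Haagerup's reduction construction, and it is here that the tower structure $2^{-n}\mathbb{Z}\subseteq 2^{-(n+1)}\mathbb{Z}$, with union dense in $\mathbb{R}$, is used essentially. A secondary, purely technical nuisance is the range $0<p<1$: conditional expectations are no longer contractive on $L_p$ there, which both restricts the $1$-complementation in (iv) to $p\ge1$ and forces the density in (ii) to be proved by the hands-on approximation above rather than by noncommutative martingale convergence.
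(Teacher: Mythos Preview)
The paper does not prove this theorem at all: it is stated as a quotation of \cite[Theorem~3.1]{HJX2010} and used as a black box, so there is no ``paper's own proof'' to compare against. What you have written is a faithful outline of the Haagerup--Junge--Xu construction itself: the dual state $\hat\phi$ on $\mathcal{R}=\mathcal{M}\rtimes_{\sigma^\phi}G$, the perturbed states $\phi_n$ with periodic modular groups obtained from elements $b_n\in L(G)$ in the centralizer, the finite centralizers $\mathcal{R}_n=\mathcal{R}_{\phi_n}$ with their traces $\tau_n$, and the passage to $L_p$ via the independence-of-state isometry together with the $L_p$-extension of the conditional expectations. You have also correctly flagged the two genuine subtleties --- the coherent choice of the $b_n$ ensuring $\mathcal{R}_n\subset\mathcal{R}_{n+1}$ and $\overline{\bigcup_n\mathcal{R}_n}^{\,w^*}=\mathcal{R}$, and the lack of contractive conditional expectations on $L_p$ for $0<p<1$ --- and these are precisely where the nontrivial work in \cite{HJX2010} lies.

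One small caveat on your density argument for $0<p<1$: pushing an analytic element of $\mathcal{R}$ into $\bigcup_n\mathcal{R}_n$ by weak$^*$ density alone is not quite enough, since weak$^*$ approximation in $\mathcal{R}$ does not directly control the $L_p$-distance of $xh_{\hat\phi}^{1/p}$. In \cite{HJX2010} this is handled more carefully (one uses that $\mathcal{E}_n\to\mathrm{id}$ pointwise $\sigma$-strongly with uniform norm bounds, combined with the density of $h_{\hat\phi}^{1/2p}\mathcal{R}_a h_{\hat\phi}^{1/2p}$), but the idea you sketch is on the right track.
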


\subsection{Complex interpolation} 
We refer to \cite{BL1976} for more information in complex interpolation theory. Let $S$ denote the strip $\{ z\in \mathbb{C}: 0 < Re z < 1 \}$, and $\bar{S}$ its closure. Let $A(S)$ be the set of all bounded functions which are analytic in $S$ and continuous in $\bar{S}.$ Let $(X_0, X_1)$ be an interpolation couple of (quasi) Banach spaces. Set
$$\mathcal{F} (X_0, X_1) := \left\{f: f(z) = \sum_{k=1}^m f_k(z) x_k, x_k \in X_0 \cap X_1, f_k \in A(S),m\in\mathbb{N} \right\}.$$
Equipped with the norm
$$\|f\|_{\mathcal{F} (X_0, X_1)} = \sup_{t \in \mathbb{R}} \left\{ \|f(it)\|_{X_0}, \|f(1+it)\|_{X_1} \right\},$$
$\mathcal{F} (X_0, X_1)$ becomes a (quasi) Banach space. For $0 < \theta <1$ we define the complex interpolation (quasi) norm $\|\cdot\|_\theta$ on $X_0 \cap X_1$ as follows
\begin{equation}
\|x\|_{\theta} = \inf \left\{ \|f\|_{\mathcal{F} (X_0, X_1)}: f(\theta) =x, f\in \mathcal{F} (X_0, X_1) \right\}, \; x \in X_0 \cap X_1.
\end{equation}
Then the complex interpolation space of $X_0$ and $X_1$ is defined as the completion of $(X_0 \cap X_1, \|\cdot\|_\theta)$, denoted by $[X_0, X_1]_\theta$.

For the tracial noncommutative $L_p$-space $\Lambda_p(\mathcal{M}, \tau)$, it is well-known that
\begin{equation}
[\Lambda_{p_0} (\mathcal{M}, \tau), \Lambda_{p_1}(\mathcal{M}, \tau)]_\theta = \Lambda_{p_\theta}(\mathcal{M}, \tau),
\end{equation}
where $1\leq p_0 < p_1 \le \infty,$ and $\frac{1}{p_\theta} = \frac{1-\theta}{p_0} + \frac{\theta}{p_1}$ (see \cite{PX2003}). By showing a type of reversed H\"{o}lder's inequality (see Lemma \ref{lem:decomposition}), Xu \cite{X1990} proved the above complex interpolation result for the quasi-Banach space $\Lambda_p(\mathcal{M}, \tau)$ :
\begin{equation}\label{eq:Xu-result}
[\Lambda_{p_0} (\mathcal{M}, \tau), \Lambda_{p_1}(\mathcal{M}, \tau)]_\theta = \Lambda_{p_\theta}(\mathcal{M}, \tau),
\end{equation}
where $0< p_0 < p_1 \leq \infty$ and $\frac{1}{p_\theta} = \frac{1-\theta}{p_0} + \frac{\theta}{p_1}$.

\subsection{Compatibility of Haagerup $L_p$-spaces}
As we have mentioned in the introduction, the compatibility of two (quasi) Haagerup noncommutative $L_p$-spaces is due to Kosaki's construction. More precisely, we have the following proposition: 

\begin{proposition}\label{prop:embed1}
	Let $\mathcal{M}$ be a $\sigma$-finite von Neumann algebra with a n.f. state $\phi.$ For $0< p_0 < p_1\leq \infty,$ and $x\in \mathcal{M},$ we have 
	$$\| x \|_{p_0, \phi}
	\le \|x\|_{p_1, \phi}.$$
\end{proposition}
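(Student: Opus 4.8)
The plan is to reduce the claimed inequality $\|x\|_{p_0,\phi}\le\|x\|_{p_1,\phi}$ for $x\in\mathcal{M}$ to a statement about the interpolation family $C_p(\mathcal{M},\phi)$ together with Kosaki's embeddings $i_p(x)=h_\phi^{1/(2p)}xh_\phi^{1/(2p)}$, and to treat the endpoint $p_1=\infty$ separately. First I would observe that by definition $\|x\|_{p,\phi}=\|i_p(x)\|_{L_p(\mathcal{M},\phi)}$, so the statement is exactly monotonicity of $p\mapsto\|i_p(x)\|_{L_p(\mathcal{M},\phi)}$ on $(0,\infty]$. Since $C_\infty(\mathcal{M},\phi)=h_\phi^{1/2}\mathcal{M}h_\phi^{1/2}$ embeds contractively into $L_1(\mathcal{M},\phi)$ (this is the compatibility fact recalled in the introduction), and $C_p=[C_\infty,L_1]_{1/p}$, the norm on $C_p$ of the element $i_p(x)=h_\phi^{1/(2p)}xh_\phi^{1/(2p)}$ is controlled by interpolation between the $C_\infty$-norm $\|x\|_{\mathcal{M}}$ and the $L_1$-norm. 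I would then invoke the identification of Kosaki's $C_p$ with Haagerup's $L_p$ under $i_p$, which gives $\|x\|_{p,\phi}=\|i_p(x)\|_{C_p(\mathcal{M},\phi)}$, reducing everything to an inequality among the $C_p$-norms.

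Next I would handle the core inequality directly via Hölder on Haagerup $L_p$-spaces rather than through the interpolation machinery, which is cleaner for the quasi-Banach range $0<p_0<1$. Write $r$ so that $\frac{1}{p_0}=\frac{1}{p_1}+\frac{2}{r}$ when $p_1<\infty$ (and $r=2p_0$ when $p_1=\infty$). The key algebraic identity is
\[
h_\phi^{\frac{1}{2p_0}}\,x\,h_\phi^{\frac{1}{2p_0}}
= h_\phi^{\frac{1}{2p_0}-\frac{1}{2p_1}}\cdot\bigl(h_\phi^{\frac{1}{2p_1}}\,x\,h_\phi^{\frac{1}{2p_1}}\bigr)\cdot h_\phi^{\frac{1}{2p_0}-\frac{1}{2p_1}}.
\]
Applying the Hölder inequality \eqref{eq:holder} twice, with $h_\phi^{\frac{1}{2p_0}-\frac{1}{2p_1}}\in L_r(\mathcal{M},\phi)$ (using $h_\phi\in L_1(\mathcal{M},\phi)_+$ and the power rule $\|h_\phi^s\|_{1/s}=\|h_\phi\|_1^s=1$), one gets
\[
\bigl\|h_\phi^{\frac{1}{2p_0}}xh_\phi^{\frac{1}{2p_0}}\bigr\|_{p_0}
\le \bigl\|h_\phi^{\frac{1}{2p_0}-\frac{1}{2p_1}}\bigr\|_r^2\,\bigl\|h_\phi^{\frac{1}{2p_1}}xh_\phi^{\frac{1}{2p_1}}\bigr\|_{p_1}
= \bigl\|h_\phi^{\frac{1}{2p_0}}xh_\phi^{\frac{1}{2p_0}}\bigr\|_{p_1},
\]
since $\frac{1}{r}=\frac{1}{2p_0}-\frac{1}{2p_1}$ forces $\|h_\phi^{\frac{1}{2p_0}-\frac{1}{2p_1}}\|_r=\|h_\phi\|_1^{\frac{1}{2p_0}-\frac{1}{2p_1}}=1$. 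This is exactly $\|x\|_{p_0,\phi}\le\|x\|_{p_1,\phi}$. For $p_1=\infty$ one replaces the second Hölder factor by $\|i_\infty(x)\|_\infty$ via $\|h_\phi^{1/(2p_0)}xh_\phi^{1/(2p_0)}\|_{p_0}\le\|h_\phi^{1/(2p_0)}\|_{2p_0}\|x\|_{\mathcal{M}}\|h_\phi^{1/(2p_0)}\|_{2p_0}=\|x\|_{\mathcal{M}}$.

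The step I expect to need the most care is justifying that $h_\phi^{s}$ genuinely lies in $L_{1/s}(\mathcal{M},\phi)$ with norm $1$ for $0<s\le 1$, i.e.\ the power law $\|h_\phi^{s}\|_{1/s}=\||h_\phi|^{1/s}\cdot\text{(stuff)}\|\ldots$; concretely one needs $|h_\phi^s|^{1/s}=h_\phi$ together with $\tr(h_\phi)=\phi(1)=1$, plus the fact that $h_\phi^s\in L_{1/s}(\mathcal{M},\phi)$ because $\hat\sigma_t^\phi(h_\phi^s)=e^{-st}h_\phi^s$, which follows from $\hat\sigma_t^\phi(h_\phi)=e^{-t}h_\phi$ and functional calculus. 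One also has to check the elementary products $h_\phi^{1/(2p_0)-1/(2p_1)}yh_\phi^{1/(2p_0)-1/(2p_1)}$ make sense as genuine elements of the appropriate Haagerup space, which is routine from the density of $\mathcal{M}_a h_\phi^{1/p}$ (Lemma \ref{lem:dense}) and closability, but should be mentioned. Once these measure-theoretic points are in place the inequality is immediate from two applications of \eqref{eq:holder}.
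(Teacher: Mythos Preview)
Your core argument (the second paragraph) is correct and is exactly the paper's proof: factor $h_\phi^{1/(2p_0)}xh_\phi^{1/(2p_0)}=h_\phi^{1/q}\bigl(h_\phi^{1/(2p_1)}xh_\phi^{1/(2p_1)}\bigr)h_\phi^{1/q}$ with $\tfrac{1}{q}=\tfrac{1}{2p_0}-\tfrac{1}{2p_1}$, apply H\"older \eqref{eq:holder}, and use $\|h_\phi^{1/q}\|_q=\|h_\phi\|_1^{1/q}=1$. Two minor points: your first paragraph about Kosaki's $C_p$-spaces is unnecessary scaffolding (and in fact does not cover the quasi-Banach range $p<1$), and your displayed inequality has a typo on the right-hand side, where $h_\phi^{1/(2p_0)}xh_\phi^{1/(2p_0)}$ should read $h_\phi^{1/(2p_1)}xh_\phi^{1/(2p_1)}$.
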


\begin{proof}
	For any $x \in \mathcal{M},$ $h_\phi^{\frac{1}{2p_0}} x h_\phi^{\frac{1}{2p_0}}$ can be decomposed as
	$$h_\phi^{\frac{1}{2p_0}} x h_\phi^{\frac{1}{2p_0}} 
	=h_\phi^{\frac{1}{q}}   h_\phi^{\frac{1}{2p_1}} x h_\phi^{\frac{1}{2p_1}} h_\phi^{\frac{1}{q}} 
	=h_\phi^{\frac{1}{q}}  y h_\phi^{ \frac{1}{q}},$$
	where $\frac{1}{q} +  \frac{1}{2p_1} = \frac{1}{2p_0}$ and $y = h_\phi^{\frac{1}{2p_1}} x h_\phi^{\frac{1}{2p_1}}$. By H\"{o}lder's inequality \eqref{eq:holder}, we have
	\begin{equation*}
	\begin{split}
	\left\|h_\phi^{\frac{1}{2p_0}} x h_\phi^{\frac{1}{2p_0}} \right\|_{L_{p_0}(\mathcal{M}, \phi)} 
	&\leq \|y\|_{L_{p_1}(\mathcal{M}, \phi)} \left\| h_\phi^{\frac{1}{q}} \right\|_{L_q(\mathcal{M}, \phi)}^2\\
	& =  \|y\|_{L_{p_1}(\mathcal{M}, \phi)} \left\| h_\phi \right\|_{L_1(\mathcal{M}, \phi)}^{\frac{2}{q}} 
	= \|y\|_{L_{p_1}(\mathcal{M}, \phi)}.\qedhere
	\end{split}
	\end{equation*}
\end{proof}

Due to the above proposition, for $0< p_0 < p_1\leq \infty$, $L_{p_1} (\mathcal{M}, \phi)$ can be embedded into $L_{p_0} (\mathcal{M}, \phi)$ via the map $h_\phi^{\frac{1}{2p_1}}xh_\phi^{\frac{1}{2p_1}} \mapsto  h_\phi^{\frac{1}{2p_0}}xh_\phi^{\frac{1}{2p_0}}.$ With this embedding, $(L_{p_0} (\mathcal{M}, \phi), L_{p_1}(\mathcal{M}, \phi))$ forms an interpolation couple.


\section{Hadamard three lines theorem for (quasi) noncommutative $L_p$-spaces}

\subsection{The tracial case.}

In this subsection, we denote by $\mathcal{M}$ a semifinite von Neumann algebra associated with a n.s.f. trace $\tau.$ For $\theta \in [0, 1],$ and $0< p_0 \leq p_1 \leq \infty,$ define $p_\theta$ as follows:
\begin{equation}\label{eq:conjugate-index}
\frac{1}{p_\theta} = \frac{1-\theta}{p_0} + \frac{\theta} {p_1}.
\end{equation}

Recall that $\bar{S}= \{ z\in \mathbb{C}: 0 \leq  Re z \leq1 \}$. Denote by $AF(X)$ the set of all bounded functions from $\bar{S}$ to $X$ which are analytic in $S$ and continuous on $\bar{S}$. The following reversed H\"older's inequality is due to Xu.

\begin{lemma}\cite[Lemma 2.2]{X1990}\label{lem:decomposition}.
	Let $\mathcal{M}$ be a von Neumann algebra associated with a n.s.f. trace $\tau$ such that $\tau(1) < \infty$. Fix $0<\lambda<1$. Then for any $f \in AF (\mathcal{M})$ and any $\epsilon >0,$ there exist $g, h \in AF(\mathcal{M})$ such that $f= gh$ and for all $z\in\partial S,t\ge 0$ we have
	\begin{equation*}
	\left\{
	\begin{aligned}
	\mu_t(g(z)) & \leq \mu_t(f(z))^{1-\lambda} + \epsilon,\\
	\mu_t(h(z)) & \leq \mu_t(f(z))^{\lambda} + \epsilon.
	\end{aligned}
	\right.
	\end{equation*}
\end{lemma}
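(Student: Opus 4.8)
The plan is to reduce the problem to a scalar factorization statement on the boundary of the strip and then construct the analytic factors $g$ and $h$ by a functional-calculus argument applied to $f$. Since $\tau(1) < \infty$, every element of $\mathcal{M}$ is in all $\Lambda_q(\mathcal{M},\tau)$, so the singular-number functions $\mu_t(f(z))$ are well-defined and bounded (by $\|f\|_\infty$) uniformly in $z \in \bar S$. The target is $f = gh$ with $g(z)$ roughly the "$(1-\lambda)$-th power" of $f(z)$ and $h(z)$ roughly the "$\lambda$-th power," so that $\mu_t$ of the factors is controlled by $\mu_t(f(z))^{1-\lambda}$ and $\mu_t(f(z))^\lambda$ up to the error $\epsilon$. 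The natural guess is: take the polar-type decomposition and set $g(z) = f(z)\,\bigl(\varphi(z) + |f(z)|\bigr)^{-\lambda}$-type expression together with $h(z) = \bigl(\psi(z)+|f(z)|\bigr)^{\lambda}$-type expression, where $\varphi,\psi$ are suitable scalar analytic functions (roughly $e^{\alpha z^2}$ with $\alpha$ chosen via $\epsilon$) inserted to make the fractional powers analytic in $z$ and to provide the additive $\epsilon$.

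First I would handle the obstruction that $z \mapsto |f(z)|^s$ need not be analytic even when $z \mapsto f(z)$ is: the modulus destroys analyticity. The standard fix, which I expect Xu uses, is to write an analytic branch of a power using a contour/integral representation. Concretely, for fixed $s \in (0,1)$ and a parameter $\delta>0$ one has the integral formula $(a+\delta)^{s} = c_s \int_0^\infty \lambda^{s-1}\,\frac{a+\delta}{a+\delta+\lambda}\,d\lambda$ for $a\ge 0$; applying this with $a = f(z)^* f(z)$ (or a symmetrized bounded version thereof) and noting that $z \mapsto (f(z)^*f(z) + \delta + \lambda)^{-1}$ is analytic and uniformly bounded, one obtains an analytic $\bar S \to \mathcal{M}$ function that on each vertical line agrees with $(|f(z)|^2+\delta)^{s}$. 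Taking $s = \lambda/2$ gives a candidate for $h(z)$ after absorbing the polar part, and $s = (1-\lambda)/2$ built into $g(z)$ with $f(z)$ in front. The inserted $\delta$ (made $z$-independent, or a scalar analytic bump like $\delta e^{z^2-z}$ to keep everything in $AF$) produces exactly the additive $\epsilon$: since $\mu_t$ is submultiplicative/monotone under functional calculus, $\mu_t\bigl((|f(z)|^2+\delta)^{\lambda/2}\bigr) = (\mu_t(f(z))^2+\delta)^{\lambda/2} \le \mu_t(f(z))^{\lambda} + \delta^{\lambda/2}$, and one chooses $\delta$ so that $\delta^{\lambda/2} < \epsilon$ on the boundary; the elementary inequality $(x+\delta)^{\lambda/2}\le x^{\lambda/2}+\delta^{\lambda/2}$ for $x\ge 0$ is what makes this clean.

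Then I would verify the identity $f = gh$ on all of $\bar S$. If $g(z)$ and $h(z)$ are both built from functional calculus of $f(z)^*f(z)$ (for $h$) together with the polar factor and $f(z)$ itself (for $g$), the product telescopes on each fixed $z$ by the usual identity $f = u|f|$ and $|f| = (|f|^2+\delta)^{(1-\lambda)/2}(|f|^2+\delta)^{\lambda/2}\cdot(|f|^2+\delta)^{-1/2}\cdot\ldots$; one must be slightly careful that the "extra" negative powers and the $\delta$-shifts cancel, which forces a specific pairing of the two fractional exponents so that their sum is exactly $1$ after accounting for the $f(z)$ factor already present in $g$. Since the factorization is an operator identity that holds pointwise and both sides are analytic, it holds throughout $S$ by the identity theorem; boundedness and continuity on $\bar S$ follow from the uniform bound $\|f\|_\infty$ and dominated convergence in the contour integral, so $g, h \in AF(\mathcal{M})$.

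The main obstacle I anticipate is precisely the analyticity of the fractional powers: one cannot naively write $|f(z)|^{\lambda}$ and hope it is analytic in $z$, and getting an honest element of $AF(\mathcal{M})$ — bounded on $\bar S$, analytic in $S$, continuous up to the boundary, with the right boundary values — requires the integral-representation trick together with uniform bounds to justify differentiating under the integral sign and to control the $\mu_t$ estimates simultaneously for all $t \ge 0$. A secondary subtlety is that $\mu_t$ is only unitarily invariant and monotone, not multiplicative, so the bound $\mu_t(g(z)) \le \mu_t(f(z))^{1-\lambda}+\epsilon$ must come from $g(z)$ being (a unitary times) a genuine function of $|f(z)|$ with the right scalar majorant, rather than from any product inequality; the construction has to be arranged so that $g(z)$ and $h(z)$ are individually controlled by functional calculus, which is why the factorization cannot be an arbitrary analytic splitting but must respect the spectral structure of $f(z)$ on each line.
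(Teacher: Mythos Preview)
The paper does not prove this lemma; it is quoted from \cite{X1990} and only a remark is given explaining why the strip formulation is equivalent to Xu's original circle formulation via the Riemann mapping theorem. So there is no in-paper proof to compare against, but your proposal has a genuine gap that would make the argument fail.

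Your construction rests on the claim that $z \mapsto (f(z)^*f(z) + \delta + \lambda)^{-1}$ is analytic on $S$. It is not. If $f$ is analytic, then $z \mapsto f(z)^*$ is \emph{anti}-analytic (the adjoint is conjugate-linear), so $z \mapsto f(z)^*f(z) = |f(z)|^2$ is not analytic, and neither is its resolvent nor any functional calculus built from it via your integral formula. This is exactly the obstruction you yourself flagged (``$z \mapsto |f(z)|^s$ need not be analytic''), and the integral representation does not cure it: the integrand $(f(z)^*f(z)+\delta+\lambda)^{-1}$ already fails to be holomorphic before you integrate. Consequently your candidates for $g$ and $h$ do not lie in $AF(\mathcal{M})$, and the argument collapses at the first step.

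The correct mechanism is not pointwise functional calculus but an \emph{outer-function / Riesz--Szeg\H{o} factorization} in a noncommutative Hardy space. One views $AF(\mathcal{M})$ (after conformal transfer to the disc) as an $H^\infty$-type algebra over a finite von Neumann algebra and produces analytic $g,h$ whose boundary moduli are prescribed (approximately $|f|^{1-\lambda}$ and $|f|^{\lambda}$); analyticity comes from the Hardy-space structure, not from resolvents of $|f(z)|^2$. The paper alludes to this in the remark after Corollary~\ref{cor:cor of xu's result}, pointing to the Riesz factorization theorem for subdiagonal algebras in \cite{BL2006,BX2007}. If you want to reconstruct Xu's proof, that is the toolkit to reach for; a purely functional-calculus approach in the variable $z$ cannot work because the modulus is intrinsically non-holomorphic.
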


Remark here that our statement is slightly different from Xu's original form. Indeed, in \cite{X1990} $\mathcal{M}$-valued analytic functions in $AF(\mathcal{M})$ are defined on the interval $[0,2\pi]$. But they are equivalent, since one can identify $[0,2\pi]$ with the unit circle and then apply Riemann mapping theorem. As a direct consequence of \eqref{eq:p norm in term of singular numbers} and Lemma \ref{lem:decomposition} we have

\begin{corollary}\label{cor:cor of xu's result}
	Let $(\mathcal{M},\tau)$ and $\lambda$ be as above. Then for any $f \in AF (\mathcal{M})$ and any $\epsilon,r >0,$ there exist $g, h \in AF(\mathcal{M})$ such that $f= gh$ and for all $z\in\partial S$ we have
	\begin{equation*}
	\left\{
	\begin{aligned}
	\|g(z)\|_{\frac{r}{1-\lambda}} & \leq \|f(z)\|_{r}^{1-\lambda} + \epsilon,\\
	\|h(z)\|_{\frac{r}{\lambda}} & \leq \|f(z)\|_{r}^{\lambda} + \epsilon.
	\end{aligned}
	\right.
	\end{equation*}
\end{corollary}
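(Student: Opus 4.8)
The plan is to derive the corollary directly from Lemma \ref{lem:decomposition} and the integral formula \eqref{eq:p norm in term of singular numbers}. First I would apply Lemma \ref{lem:decomposition} with the given $\lambda$ and with a small parameter $\epsilon'>0$ to be fixed at the end, obtaining $g,h\in AF(\mathcal{M})$ with $f=gh$ and, for every $z\in\partial S$ and every $t\ge 0$,
\[
\mu_t(g(z))\le \mu_t(f(z))^{1-\lambda}+\epsilon',\qquad \mu_t(h(z))\le \mu_t(f(z))^{\lambda}+\epsilon'.
\]
Since $\tau(1)<\infty$ and $g(z),h(z)\in\mathcal{M}$ are bounded, their generalized singular number functions vanish on $[\tau(1),\infty)$, so these inequalities really say $\mu_{\cdot}(g(z))\le \mu_{\cdot}(f(z))^{1-\lambda}+\epsilon'\chi_{[0,\tau(1))}$ as functions on $\mathbb{R}_+$, and similarly for $h$.

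Next I would pass to the commutative side. By \eqref{eq:p norm in term of singular numbers}, for any $0<s<\infty$ one has $\|x\|_s=\|\mu_{\cdot}(x)\|_{L_s(\mathbb{R}_+,dt)}$. Setting $q:=r/(1-\lambda)$, this gives $\|\mu_{\cdot}(f(z))^{1-\lambda}\|_{L_q}=\|f(z)\|_r^{1-\lambda}$ and $\|g(z)\|_q=\|\mu_{\cdot}(g(z))\|_{L_q}\le \|\mu_{\cdot}(f(z))^{1-\lambda}+\epsilon'\chi_{[0,\tau(1))}\|_{L_q}$. If $q\ge 1$ (i.e.\ $r\ge 1-\lambda$), Minkowski's inequality \eqref{eq:minkowski inequality p>1} in $L_q(\mathbb{R}_+)$ bounds the right-hand side by $\|f(z)\|_r^{1-\lambda}+\epsilon'\tau(1)^{1/q}$. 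If $q<1$, I would instead use $(a+b)^q\le a^q+b^q$ for $0<q\le 1$ and integrate over $[0,\tau(1))$ to get $\|g(z)\|_q^q\le \|f(z)\|_r^{r}+\epsilon'^{\,q}\tau(1)$, and then estimate $(\,\cdot\,)^{1/q}$ by the mean value theorem, using $\|f(z)\|_r^{r}\le \tau(1)\,(\sup_{w\in\partial S}\|f(w)\|)^{r}<\infty$, which holds because $f$ is bounded; this yields $\|g(z)\|_q\le \|f(z)\|_r^{1-\lambda}+C\epsilon'^{\,q}$ with a constant $C=C(q,r,\lambda,\tau(1),f)$ independent of $z$. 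The same computation with $q':=r/\lambda$ in place of $q$ gives the corresponding bound for $h$.

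Finally, choosing $\epsilon'$ small enough (depending only on $\epsilon$, $r$, $\lambda$, $\tau(1)$, and, in the quasi-Banach range, on $\sup_{w\in\partial S}\|f(w)\|$) makes both error terms strictly less than $\epsilon$ uniformly in $z\in\partial S$, which proves the corollary; the properties $g,h\in AF(\mathcal{M})$ and $f=gh$ are provided by Lemma \ref{lem:decomposition}. The only genuinely non-routine point is the quasi-norm case $q<1$ or $q'<1$, that is $r<\max\{\lambda,1-\lambda\}$: there the triangle inequality is unavailable, and one must exploit the boundedness of $f$ (and finiteness of $\tau$) to absorb the additive perturbation uniformly in $z$; everything else is a straightforward substitution into \eqref{eq:p norm in term of singular numbers}.
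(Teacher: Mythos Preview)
Your proposal is correct and follows exactly the route the paper indicates: apply Lemma~\ref{lem:decomposition} with a small auxiliary parameter and pass to $L_q$-norms via \eqref{eq:p norm in term of singular numbers}, using the finiteness of $\tau(1)$ and the boundedness of $f$ to absorb the additive $\epsilon'$ uniformly in $z$. The paper states this as an immediate consequence without writing out the case split $q\ge 1$ versus $q<1$; your extra care in the quasi-norm range is the only (and minor) elaboration.
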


\begin{remark}
	The above corollary is a special case of the Riesz factorization theorem, which was proved in the general case of subdiagonal algebras by Blecher-Labuschagne \cite{BL2006} and Bekjan-Xu \cite{BX2007}.
\end{remark}

\begin{proposition}\label{prop:three-line}
	Let $\mathcal{M}$ be a semifinite von Neumann algebra associated with a n.s.f. trace $\tau$. Let $G \in AF( \mathcal{M})$. Assume that  $0 < p_0 < p_1 \leq \infty,$ and for $\theta \in [0, 1]$ define $p_\theta$ as in the equation \eqref{eq:conjugate-index}.
	For $k =0,1,$ define
	$$M_k := \sup_{t \in \mathbb{R}} \|G(k+it)\|_{p_k}.$$
	Then we have
	\begin{equation*}
	\|G(\theta) \|_{p_\theta} \leq  M_0^{1-\theta} \; M_1^{\theta}.
	\end{equation*}
\end{proposition}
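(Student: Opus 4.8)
The plan is to reduce the statement to the finite case and then apply Xu's reversed Hölder's inequality (Corollary \ref{cor:cor of xu's result}) to factor $G$ and convert the three-lines estimate into the classical Hadamard three lines theorem for a scalar analytic function. First I would dispose of trivialities: if $M_0 = 0$ or $M_1 = 0$ then $G$ vanishes on one of the boundary lines, hence $G \equiv 0$ by the maximum principle applied coordinatewise, and there is nothing to prove; similarly if $M_0 = \infty$ or $M_1 = \infty$ the bound is vacuous. Also, if $\theta = 0$ or $\theta = 1$ the claim is immediate from the definition of $M_k$, so I assume $0 < \theta < 1$ and $0 < M_0, M_1 < \infty$.

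The key reduction is to the case $\tau(1) < \infty$, which is needed to invoke Lemma \ref{lem:decomposition}. Since $G \in AF(\mathcal{M})$ takes values in a bounded subset of $\mathcal{M}$, and since each $G(k+it) \in \Lambda_{p_k}(\mathcal{M},\tau)$ has finite $p_k$-norm with the sup over $t$ finite, one expects the relevant ``mass'' of $G$ to be carried by a $\sigma$-finite corner of $\mathcal{M}$; after cutting down by a suitable projection $e$ with $\tau(e) < \infty$ one may replace $\mathcal{M}$ by $e\mathcal{M}e$. Making this rigorous — producing a single projection $e$ that simultaneously captures $G(z)$ for all $z$ up to arbitrarily small error in all the relevant norms — is the step I expect to be the main obstacle, since $G$ ranges over the whole strip; one workaround is to avoid it entirely and instead apply Corollary \ref{cor:cor of xu's result} only after an approximation that restricts attention to a finite subalgebra, or to note that Xu's factorization in fact extends to the semifinite case via the Riesz factorization theorem of Blecher--Labuschagne and Bekjan--Xu cited in the remark above.

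Granting the factorization, here is the core argument. Fix $\epsilon > 0$. Put $\lambda = \theta$ and $r = p_\theta$, so that $\tfrac{r}{1-\lambda} = \tfrac{p_\theta}{1-\theta}$ and $\tfrac{r}{\lambda} = \tfrac{p_\theta}{\theta}$. Apply Corollary \ref{cor:cor of xu's result} to $G$: there exist $g, h \in AF(\mathcal{M})$ with $G = gh$ and, for all $z \in \partial S$,
\begin{equation*}
\|g(z)\|_{p_\theta/(1-\theta)} \le \|G(z)\|_{p_\theta}^{1-\theta} + \epsilon,
\qquad
\|h(z)\|_{p_\theta/\theta} \le \|G(z)\|_{p_\theta}^{\theta} + \epsilon.
\end{equation*}
Now consider the scalar function $F(z) := \tr\big(|g(z)|^{p_\theta/(1-\theta)}\big)$ — more precisely I would instead work with the analytic function $z \mapsto \tr(a g(z) b)$ for suitable $a,b$, or directly estimate $\|G(\theta)\|_{p_\theta}$ by Hölder: $\|G(\theta)\|_{p_\theta} = \|g(\theta)h(\theta)\|_{p_\theta} \le \|g(\theta)\|_{p_\theta/(1-\theta)}\,\|h(\theta)\|_{p_\theta/\theta}$ using $\tfrac{1-\theta}{p_\theta} + \tfrac{\theta}{p_\theta} = \tfrac{1}{p_\theta}$ and inequality \eqref{eq:holder}. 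It then remains to bound $\|g(\theta)\|_{p_\theta/(1-\theta)}$ and $\|h(\theta)\|_{p_\theta/\theta}$ at the interior point $\theta$ by their boundary sups. For this I apply the scalar Hadamard three lines theorem to $z \mapsto \|g(z)\|_{p_\theta/(1-\theta)}$ — or rather to an honest analytic scalar function dominating it, obtained by duality of $\Lambda_q$-spaces or by testing against a fixed element — to get $\|g(\theta)\|_{p_\theta/(1-\theta)} \le \big(\sup_t \|g(it)\|\big)^{1-\theta}\big(\sup_t \|g(1+it)\|\big)^{\theta} \le (M_0^{1-\theta}+\epsilon)^{1-\theta}(M_1^{1-\theta}+\epsilon)^{\theta}$, and similarly $\|h(\theta)\|_{p_\theta/\theta} \le (M_0^{\theta}+\epsilon)^{1-\theta}(M_1^{\theta}+\epsilon)^{\theta}$. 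Multiplying the two and combining exponents gives $\|G(\theta)\|_{p_\theta} \le (M_0^{1-\theta}+\epsilon)^{1-\theta}(M_1^{1-\theta}+\epsilon)^{\theta}(M_0^{\theta}+\epsilon)^{1-\theta}(M_1^{\theta}+\epsilon)^{\theta}$; letting $\epsilon \to 0$ the right side tends to $M_0^{(1-\theta)^2 + \theta(1-\theta)} M_1^{\theta(1-\theta) + \theta^2} = M_0^{1-\theta} M_1^{\theta}$, which is the claim. The subtlety worth care is the passage from the mixed-norm estimate on $\partial S$ to an interior bound: the three lines theorem applies to scalar analytic functions, not to the $\Lambda_q$-valued norm directly, so one should either invoke the vector-valued Hadamard theorem for (quasi-)Banach-valued $AF$ functions or reduce to scalars via a norming functional, being mindful that for $q < 1$ duality arguments are unavailable and one should instead apply the three lines theorem to $z \mapsto \tr(u |g(z)|^{q-1} w g(z))$-type expressions or simply quote that $\|g(\cdot)\|_{q}$ is logarithmically subharmonic.
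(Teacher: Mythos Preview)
Your argument has a genuine gap at the step where you pass from the factorization to the bound $\|g(\theta)\|_{p_\theta/(1-\theta)} \le (M_0^{1-\theta}+\epsilon)^{1-\theta}(M_1^{1-\theta}+\epsilon)^\theta$. Applying Corollary~\ref{cor:cor of xu's result} with the single exponent $r = p_\theta$ gives, on all of $\partial S$, only
\[
\|g(z)\|_{p_\theta/(1-\theta)} \le \|G(z)\|_{p_\theta}^{\,1-\theta} + \epsilon,
\]
i.e.\ a bound in terms of $\|G(it)\|_{p_\theta}$ and $\|G(1+it)\|_{p_\theta}$. These are \emph{not} controlled by $M_0 = \sup_t \|G(it)\|_{p_0}$ and $M_1 = \sup_t \|G(1+it)\|_{p_1}$: since $p_0 < p_\theta$, in a normalized finite trace one has $\|G(it)\|_{p_\theta} \ge \|G(it)\|_{p_0}$, so $\|G(it)\|_{p_\theta} \le M_0$ need not hold (take $G$ constant equal to a projection of small trace). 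Thus your inequality $\sup_t \|g(it)\|_{p_\theta/(1-\theta)} \le M_0^{1-\theta} + \epsilon$ is unjustified. If instead you go back to Lemma~\ref{lem:decomposition} at the level of singular values, you do obtain the correct boundary norms, namely $\|g(it)\|_{p_0/(1-\theta)} \lesssim M_0^{1-\theta}$ and $\|g(1+it)\|_{p_1/(1-\theta)} \lesssim M_1^{1-\theta}$; but then the three-lines estimate you need for $g$ is between $\Lambda_{p_0/(1-\theta)}$ and $\Lambda_{p_1/(1-\theta)}$, which is exactly the original problem with rescaled exponents and is circular unless $p_0/(1-\theta) \ge 1$. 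Your fallback via log-subharmonicity of $z\mapsto\|g(z)\|_q$ for a \emph{fixed} $q$ would at best give a three-lines bound in a single norm on both boundary lines, which does not match the two different boundary norms you actually have.

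The paper's proof sidesteps all of this by factoring into $n$ equal pieces rather than two: choose $n\in\mathbb{N}$ with $np_0 \ge 1$ and use Lemma~\ref{lem:decomposition} (iterated) to write $G = G_1 \cdots G_n$ with $\mu_t(G_j(z)) \le \mu_t(G(z))^{1/n} + \epsilon$ on $\partial S$. The singular-value bound then yields $\|G_j(k+it)\|_{np_k} \le M_k^{1/n} + O(\epsilon)$ for $k=0,1$, and since now $np_0, np_1 \ge 1$ the \emph{Banach} three-lines theorem (the standard case, taken as known) applies directly to each $G_j$, giving $\|G_j(\theta)\|_{np_\theta} \le M_0^{(1-\theta)/n} M_1^{\theta/n} + O(\epsilon)$. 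H\"older then recombines: $\|G(\theta)\|_{p_\theta} \le \prod_j \|G_j(\theta)\|_{np_\theta} \le M_0^{1-\theta} M_1^\theta + O(\epsilon)$. The key idea you are missing is that an $n$-fold equal splitting lifts \emph{all} the exponents $p_0, p_1, p_\theta$ into the Banach range simultaneously, so one can simply quote the classical case and avoid any appeal to quasi-norm subharmonicity.
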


\begin{proof}
	The proof of Banach spaces case $1\le p_0<p_1\le\infty$ is a standard argument, using Hadamard three lines theorem for analytic functions. We refer to \cite[Theorem 2]{Beigi2013} and \cite[Theorem 2.10]{Jencova2018} for the proof of (not necessarily tracial) state case for $1\le p_0<p_1\le\infty$. For the proof for quasi-Banach spaces case $0< p_0<p_1\le\infty$, choose $n\in\mathbb{N}$ such that $n p_0\ge 1$. Then for any $\epsilon>0$, by Corollary \ref{cor:cor of xu's result} there exist $G_1,\cdots, G_n\in AF(\mathcal{M})$ such that $G=\prod_{j=1}^{n}G_j$ and for $1\le j\le n$
	\begin{equation}\label{ineq:factorization for n}
	\sup_{t \in \mathbb{R}} \|G_j(k+it)\|_{n p_k}\le \sup_{t \in \mathbb{R}} \|G(k+it)\|_{ p_k}^{\frac{1}{n}}+\epsilon,~k=0,1.
	\end{equation}
	
	Hence by H\"older's inequality and the desired result for Banach spaces case, we have
	\begin{equation*}
	\|G(\theta) \|_{p_\theta} 
    \leq \prod_{j=1}^{n}\|G_j(\theta)\|_{np_\theta}
	\leq  \prod_{j=1}^{n}\left(\sup_{t \in \mathbb{R}} \|G_j(it)\|_{n p_0}\right)^{1-\theta}\left(\sup_{t \in \mathbb{R}} \|G_j(1+it)\|_{n p_1}\right)^{\theta}.
	\end{equation*}
	Applying \eqref{ineq:factorization for n}, we obtain
	\begin{equation*}
	\begin{split}
	\|G(\theta) \|_{p_\theta} 
	&\leq\prod_{j=1}^{n}\left(\sup_{t \in \mathbb{R}} \|G(it)\|_{p_0}^{\frac{1}{n}}+\epsilon\right)^{1-\theta}\left(\sup_{t \in \mathbb{R}} \|G(1+it)\|_{p_1}^{\frac{1}{n}}+\epsilon\right)^{\theta}\\
	&\leq \left(\sup_{t \in \mathbb{R}} \|G(it)\|_{p_0}\right)^{1-\theta} \left(\sup_{t \in \mathbb{R}} \|G(1+it)\|_{ p_1}\right)^{\theta}+O(\epsilon).
	\end{split}
	\end{equation*}
	Letting $\epsilon\to 0^+$, we deduce our desired result.
\end{proof}

\subsection{The Haagerup noncommutative $L_p$-spaces case.}

\begin{lemma}\label{lem:three-line-Haagerup}
	Let $\mathcal{M}$ be a semifinite von Neumann algebra with a n.f. trace $\tau.$ Let $G\in AF(\mathcal{M})$. Assume that  $0 < p_0 < p_1 \leq  \infty, $ and for $\theta \in [0, 1]$ define $p_\theta$ as in the equation \eqref{eq:conjugate-index}.
	For $k =0,1$ define
	$$M_k:= \sup_{t \in \mathbb{R}} \|G(k+it)\|_{p_k ,\tau}.$$
	Then we have
	\begin{equation*}
	\|G(\theta) \|_{p_\theta, \tau} \leq  M_0^{1-\theta} \; M_1^{\theta}.
	\end{equation*}
\end{lemma}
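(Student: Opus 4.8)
The plan is to reduce this lemma about the twisted Haagerup-type norm $\|\cdot\|_{p,\tau}$ on a \emph{semifinite} von Neumann algebra to Proposition~\ref{prop:three-line}, which is already established for the genuine tracial norms $\|\cdot\|_{p}$ of $\Lambda_p(\mathcal{M},\tau)$. First I would unwind the definition: for $x\in\mathcal{M}$ we have $\|x\|_{p,\tau}=\|h_\tau^{1/2p}\,x\,h_\tau^{1/2p}\|_{L_p(\mathcal{M},\tau)}$, and since $\mathcal{M}$ is semifinite with trace $\tau$, the Haagerup space $L_p(\mathcal{M},\tau)$ is isometrically identified with the tracial space $\Lambda_p(\mathcal{M},\tau)$ (via the identification recalled after Lemma~\ref{lem:dense}, $x=x'\otimes\exp(\cdot/p)$). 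So the real content is a three-lines estimate for the function $z\mapsto h_\tau^{1/2p_{\mathrm{Re}\,z}}\,G(z)\,h_\tau^{1/2p_{\mathrm{Re}\,z}}$, where the exponent of $h_\tau$ itself moves with $z$; this is not an element of $AF(\mathcal{M})$ in the naive sense because $h_\tau$ is unbounded and the exponent is $z$-dependent.

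The key device to handle the moving exponent is to absorb the $h_\tau$-weights into a genuine $AF$-function valued in a larger semifinite algebra. Concretely, set $\alpha(z)=\tfrac{1}{2}\big(\tfrac{1-z}{p_0}+\tfrac{z}{p_1}\big)$, so that $\alpha(k)=1/(2p_k)$ and $\alpha(\theta)=1/(2p_\theta)$, and consider $H(z):=h_\tau^{\alpha(z)}\,G(z)\,h_\tau^{\alpha(z)}$. On the boundary line $\mathrm{Re}\,z=k$ one has $|h_\tau^{\alpha(z)}|=h_\tau^{\mathrm{Re}\,\alpha(z)}=h_\tau^{1/(2p_k)}$ up to a unitary (imaginary) power $h_\tau^{i\,\mathrm{Im}\,\alpha(z)}$, which is a unitary in the algebra and leaves $\|\cdot\|_{p_k}$ invariant; hence $\|H(k+it)\|_{p_k}=\|G(k+it)\|_{p_k,\tau}=M_k$ up to the supremum. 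The point is then to verify that $H$, viewed inside $L_0(\mathcal{M}\,\overline{\otimes}\,L_\infty(\mathbb{R}),\tau\otimes\exp)$ (or equivalently as an $L_{p_0}+L_{p_1}$-valued analytic function on $\bar S$ after the Kosaki-type embedding of Proposition~\ref{prop:embed1}), is admissible for the three-lines argument. For the \emph{Banach} range $1\le p_0<p_1$ this is exactly the Stein-interpolation / Hadamard argument with an analytic family of operators, and one can invoke the cited statements \cite[Theorem 2]{Beigi2013}, \cite[Theorem 2.10]{Jencova2018}; for the \emph{quasi-Banach} range one repeats the factorization trick of the proof of Proposition~\ref{prop:three-line}: pick $n$ with $np_0\ge 1$, factor the auxiliary function via Corollary~\ref{cor:cor of xu's result} into $n$ pieces, apply H\"older's inequality \eqref{eq:holder} and the Banach case to each piece, and let the approximation parameter $\epsilon\to 0^+$.

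An alternative, and perhaps cleaner, route that I would actually write up: reduce directly to Proposition~\ref{prop:three-line} by the substitution $\widetilde G(z):=h_\tau^{i\,\mathrm{Im}\,\alpha(z)/(\cdots)}$-corrected function is awkward, so instead use the dense subalgebra. By Lemma~\ref{lem:dense} (applied in the tracial/semifinite setting, or simply because $\mathcal{S}h_\tau^{1/2p}\cdots$ is dense), it suffices to prove the inequality for $G$ ranging over a dense class for which $h_\tau^{\alpha(z)}G(z)h_\tau^{\alpha(z)}$ genuinely lies in $AF$ of the tracial $L_{p_\theta}$-scale; then $z\mapsto \|h_\tau^{\alpha(z)}G(z)h_\tau^{\alpha(z)}\|$ is handled by Proposition~\ref{prop:three-line} with $G$ there replaced by this new function, and the boundary values are $M_0,M_1$ by the unitarity of imaginary powers of $h_\tau$. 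Finally pass to general $G\in AF(\mathcal{M})$ by approximation, using the continuity of all quantities involved and the fact that $\|G(\theta)\|_{p_\theta,\tau}$ depends continuously on $G$ in, say, the $\mathcal{M}$-valued uniform norm on compact subsets of $\bar S$.

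The main obstacle I anticipate is purely technical but real: making rigorous sense of the operator-valued analytic function $z\mapsto h_\tau^{\alpha(z)}G(z)h_\tau^{\alpha(z)}$ when $h_\tau$ is unbounded and the $z$-dependent exponent has nonconstant real part, and in particular checking that it satisfies the boundedness/continuity hypotheses needed either for Stein interpolation or for the factorization argument. The clean way around it is to first establish the estimate on the dense subalgebra $\mathcal{S}$ (or $\mathcal{M}_a h_\tau^{\cdot}$-type elements, cf. Lemma~\ref{lem:dense} and \eqref{eq:proof of lem of Junge-Xu}), where everything is a bona fide bounded analytic family, and then use density together with Proposition~\ref{prop:embed1} (which gives the needed compatibility and the continuous inclusions $L_{p_1}\hookrightarrow L_{p_0}$) to pass to the limit. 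Once that reduction is in place, the quasi-Banach case adds nothing new beyond copying the $n$-fold Riesz factorization already used in Proposition~\ref{prop:three-line}.
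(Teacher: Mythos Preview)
You have the right ingredients but miss the one-line computation that makes the lemma immediate. Under the isometry $L_p(\mathcal{M},\tau)\to\Lambda_p(\mathcal{M},\tau)$ recalled after Lemma~\ref{lem:dense} (namely $x'\otimes\exp(\cdot/p)\mapsto x'$, with $h_\tau=\mathrm{id}\otimes\exp(\cdot)$), the element $h_\tau^{1/2p}\,x\,h_\tau^{1/2p}=(x\otimes 1)\cdot(\mathrm{id}\otimes\exp(\cdot/p))=x\otimes\exp(\cdot/p)$ is sent to $x$ itself. Hence
\[
\|x\|_{p,\tau}=\bigl\|h_\tau^{1/2p}xh_\tau^{1/2p}\bigr\|_{L_p(\mathcal{M},\tau)}=\|x\|_{\Lambda_p(\mathcal{M},\tau)}
\]
for every $x\in\mathcal{M}$ and every $p$. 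There is no ``moving exponent'' left: the weights $h_\tau^{1/2p_{\mathrm{Re}\,z}}$ are completely absorbed by the identification, and the function you need to feed into Proposition~\ref{prop:three-line} is $G$ itself, unchanged. This is exactly the paper's proof, which is one sentence.

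Your proposed detours (the auxiliary $H(z)=h_\tau^{\alpha(z)}G(z)h_\tau^{\alpha(z)}$, the worry about unbounded $h_\tau$ with $z$-dependent powers, the density argument via $\mathcal{M}_a$, and the re-run of the $n$-fold factorization) are all unnecessary here; they arise only because you did not carry the identification through to its conclusion. The place where one genuinely has to work with a nontrivial density/approximation argument is the next step, Proposition~\ref{prop:three-line-Haagerup}, where $\phi$ is a general state and Haagerup's reduction is invoked---but for the tracial lemma at hand there is nothing to do beyond recognizing $\|\cdot\|_{p,\tau}=\|\cdot\|_{\Lambda_p}$.
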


\begin{proof}
	It is obvious that  $$\|G(z)\|_{p_z, \tau} = \left\|h_\tau^{\frac{1}{2p_z}} G(z) h_\tau^{\frac{1}{2p_z}} \right\|_{L_{p_z}(\mathcal{M}, \tau)} = \|G(z)\|_{\Lambda_{p_z}(\mathcal{M}, \tau)},$$
	where $\frac{1}{p_z} = \frac{1-Re(z)}{p_0} + \frac{Re(z)}{p_1}.$ Then our result can be deduced from Proposition \ref{prop:three-line}.
\end{proof}

The following proposition extends Beigi's result \cite[Theorem 2]{Beigi2013} (see also Jen\v{c}ov\'a's result \cite[Theorem 2.10]{Jencova2018}) to the quasi-Banach space case.

\begin{proposition}\label{prop:three-line-Haagerup}
	Let $\mathcal{M}$ be a ($\sigma$-finite) von Neumann algebra with a n.s.f. state $\phi.$ Let $G\in AF(\mathcal{M})$. Assume that  $0 < p_0 < p_1 \leq  \infty, $ and for $\theta \in [0, 1]$ define $p_\theta$ as in the equation \eqref{eq:conjugate-index}.
	For $k =0,1$ define
	$$M_k := \sup_{t \in \mathbb{R}} \|G(k+it)\|_{p_k ,\phi}.$$
	Then we have
	\begin{equation*}
	\|G(\theta) \|_{p_\theta, \phi} \leq  M_0^{1-\theta} \; M_1^{\theta}.
	\end{equation*}
\end{proposition}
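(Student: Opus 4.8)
The plan is to reduce the general $\sigma$-finite state case to the semifinite tracial case already handled in Lemma~\ref{lem:three-line-Haagerup}, using Haagerup's reduction theorem (Theorem~\ref{thm:reduction}). First I would observe that by the density statement in Lemma~\ref{lem:dense}(ii), combined with the continuity of the norms $\|\cdot\|_{p_k,\phi}$, it suffices to prove the inequality under the additional assumption that $G$ takes values in $\mathcal{M}_a$, the analytic elements; more precisely, since $G$ is continuous on $\bar S$ and analytic on $S$, one can approximate $G$ uniformly by functions of the form $z\mapsto \sum_k f_k(z)x_k$ with $x_k\in\mathcal{M}_a$ and $f_k\in A(S)$ (this is why the space $\mathcal{F}(X_0,X_1)$ in Section~2.4 is built from such finite sums), and the error can be absorbed. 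The point of passing to $\mathcal{M}_a$ is that elements $h_\phi^{1/2p}xh_\phi^{1/2p}$ with $x\in\mathcal{M}_a$ live in the reduced algebra and behave well under the reduction maps.

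Next I would apply Theorem~\ref{thm:reduction}: there is a crossed product $\mathcal{R}=\mathcal{M}\rtimes_{\sigma^\phi}G$ with dual state $\hat\phi$, finite von Neumann algebras $(\mathcal{R}_n,\tau_n)$ with isometric embeddings $\theta_n:\Lambda_{p}(\mathcal{R}_n,\tau_n)\hookrightarrow L_p(\mathcal{R},\hat\phi)$, and $L_p(\mathcal{M},\phi)$ sits isometrically inside $L_p(\mathcal{R},\hat\phi)$, with the union of the ranges of the $\theta_n$ dense. Because the inequality we want only involves the norms $\|G(\theta)\|_{p_\theta,\phi}$ and $\|G(k+it)\|_{p_k,\phi}$, and all these spaces embed isometrically into the corresponding Haagerup $L_p$-spaces of $(\mathcal{R},\hat\phi)$ via Kosaki's embeddings (Proposition~\ref{prop:embed1}), it is enough to prove the statement for the pair $(\mathcal{R},\hat\phi)$ in place of $(\mathcal{M},\phi)$. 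Then, using the density of $\bigcup_n\theta_n[\Lambda_{p}(\mathcal{R}_n,\tau_n)]$ and the conditional-expectation compatibility built into the reduction theorem (the embeddings are all trace-preserving and intertwine the modular structure), I would approximate the function $z\mapsto h_{\hat\phi}^{1/2p_z}G(z)h_{\hat\phi}^{1/2p_z}$ by functions living in a single finite block $\mathcal{R}_n$. On $\mathcal{R}_n$ the state $\hat\phi$ restricts to something of the form $\tau_n(\sigma_n\,\cdot\,)$ with $\sigma_n\geq0$ invertible and $\tau_n(\sigma_n)=1$, so Proposition~\ref{prop:fit} identifies $\|\cdot\|_{p,\hat\phi|_{\mathcal{R}_n}}$ with a tracial expression, and Lemma~\ref{lem:three-line-Haagerup} applies verbatim inside $(\mathcal{R}_n,\tau_n)$, giving $\|G_n(\theta)\|_{p_\theta,\hat\phi}\leq M_0^{1-\theta}M_1^\theta$ up to an $\epsilon$-error from the approximations. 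Letting the approximation parameters go to zero finishes the argument.

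The main obstacle I anticipate is making the approximation step rigorous: I need to produce, from an arbitrary $G\in AF(\mathcal{M})$, a genuinely $\mathcal{R}_n$-valued analytic function $G_n$ on $\bar S$ whose boundary and interior norms approximate those of $G$ in the right $L_{p_k,\phi}$ senses \emph{simultaneously and uniformly in $t$}. For $1\le p_0<p_1\le\infty$ the natural device is to apply the trace-preserving conditional expectation $\mathcal{E}_n:\mathcal{R}\to\mathcal{R}_n$ pointwise (it is a contraction on each $L_{p}$, $p\geq1$, by item (iv) of Theorem~\ref{thm:reduction}) and to regularize $G$ by convolution with a heat kernel in the modular flow so that the values become analytic elements; then $\mathcal{E}_n\circ G$ converges to $G$ in norm uniformly on $\bar S$ because $G(\bar S)$ is a compact subset of $\mathcal{M}$. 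For $0<p_0<1$ the conditional expectations need not be bounded on the quasi-Banach $L_{p_0}$, so there I would instead factor $G=\prod_{j=1}^m G_j$ with $m p_0\ge1$ using Corollary~\ref{cor:cor of xu's result} exactly as in the proof of Proposition~\ref{prop:three-line} — but this time applied inside the crossed product to the functions $z\mapsto h_{\hat\phi}^{1/2p_z}G_j(z)h_{\hat\phi}^{1/2p_z}$, reducing to the Banach case $m p_k\geq1$ — and only then apply the reduction/approximation. Handling the endpoint $p_1=\infty$ (where $\|G(1+it)\|_{\infty,\phi}=\|G(1+it)\|_{\mathcal{M}}$) requires the usual care that the density operator factors $h_\phi^{1/2p_z}$ are uniformly bounded as $\mathrm{Re}(z)\to1$, which follows since $\|h_\phi^{1/2p_z}\|_{L_{p_z/(\text{something})}}$ stays controlled along the strip; this is routine once the $\mathcal{M}_a$-valued reduction is in place.
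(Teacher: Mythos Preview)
Your overall plan---reduce to the crossed product $(\mathcal{R},\hat\phi)$ via Haagerup's reduction theorem and then approximate by a single finite block $(\mathcal{R}_n,\tau_n)$ where the tracial three-lines lemma applies---is exactly the route taken in the paper. Where you diverge is in the mechanism of the approximation step, and there your treatment of the quasi-Banach range $0<p_0<1$ has a genuine gap.

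You propose to handle $p_0<1$ by first factoring $G=\prod_{j=1}^m G_j$ with $mp_0\ge 1$ via Corollary~\ref{cor:cor of xu's result} ``inside the crossed product,'' and only afterwards approximate by a finite block. But Lemma~\ref{lem:decomposition} and its corollary require a finite trace $\tau(1)<\infty$; the canonical trace on $\mathcal{R}=\mathcal{M}\rtimes_{\sigma^\phi}\mathbb{R}$ is only semifinite, so the factorization is not available at that level. Nor can you factor inside $\mathcal{M}$ itself, which need not be semifinite. The conditional-expectation route you sketch for $p\ge 1$ is fine, but it does not save the $p_0<1$ case.

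The paper avoids this problem entirely by a much simpler device that works uniformly for all $p_0>0$. After writing $G(z)=\sum_m G_m(z)x_m$ with $x_m\in\mathcal{R}$, it approximates each $x_m$ by some $y_m\in\mathcal{R}_{N'}$ \emph{in the $\|\cdot\|_{p_1,\tau_{N'}}$ norm}, and then controls the error $h(z)=\sum_m G_m(z)(x_m-y_m)$ on the line $\mathrm{Re}\,z=0$ using the monotonicity $\|\cdot\|_{p_0,\phi}\le\|\cdot\|_{p_1,\phi}$ from Proposition~\ref{prop:embed1} (which holds for all $0<p_0<p_1$). The value at $\theta$ is then matched by the elementary trick $\tilde G(z):=g(z)+h(\theta)$, which is $\mathcal{R}_{N'}$-valued, satisfies $\tilde G(\theta)=G(\theta)$, and has boundary norms within $O(\epsilon)$ of those of $G$; Lemma~\ref{lem:three-line-Haagerup} then applies directly to $\tilde G$. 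No factorization and no $L_p$-boundedness of conditional expectations for $p<1$ are needed.
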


\begin{proof}
	The key idea is to use the Haagerup reduction theorem. Recall that $\mathcal{R} =  \mathcal{M} \rtimes_{\sigma^\phi}\cup_{n\ge 1} 2^{-n} \mathbb{Z}$, and $L_p(\mathcal{M}, \phi)$ can be isometrically embedded into $L_p(\mathcal{R}, \hat{\phi})$ for any $0< p< \infty.$ Hence it suffices to show that for $G \in AF (\mathcal{R})$
	\begin{equation*}
	\|G(\theta) \|_{p_\theta, \hat{\phi}} \leq  M_0^{1-\theta} \; M_1^{\theta},
	\end{equation*}
	where
	$M_k:= \sup_{t \in \mathbb{R}} \|G(k+it)\|_{p_k, \hat{\phi}},~k=0,1$.
	
	According to Theorem \ref{thm:reduction}, $\bigcup_n L_p(\mathcal{R}_n, \tau_n)$ is dense in $L_p (\mathcal{R}, \hat{\phi})$. Hence without loss of generality, we can assume that there exists an integer $N,$ such that $x=G(\theta) \in \mathcal{R},$ and
	$M_k \leq 1$ for $k=0,1.$ Suppose that
	$$G(z) = \sum_{m} G_m(z) x_m,$$
	where $G_m \in A(S)$ and $x_m \in \mathcal{R}_N.$ Then for $\epsilon >0,$ there exists a sequence $\{y_m\} \subset \mathcal{R}_{N'}$ such that for each $m$
	\begin{equation}\label{eq:approximation}
	\|x_m - y_m\|_{p_1, \tau_{N'}} < \frac{\epsilon}{2^m K_m},
	\end{equation}
	where $N' > N$ and $K_m:=\sup\{|G_m(z)|:z\in\overline{S}\}<\infty$.
	Hence
	$$G = \sum_m G_m y_m + \sum_m G_m (x_m - y_m)=:g+ h,$$
	where $g(z) = \sum_m G_m(z) y_m$ and $h(z) = \sum_m G_m(z) (x_m - y_m).$ Then if $p_0\ge 1$, using the inequality \eqref{eq:minkowski inequality p>1} we have
	\begin{equation*}
	\begin{split}
	\|h(it)\|_{p_0, \tau_{N'}} 
	& \leq \sum_m |G_m(it)| \|x_m -y_m\|_{p_0, \tau_{N'}} \\
	& \leq \sum_m |G_m(it)| \|x_m -y_m\|_{p_1, \tau_{N'}} \\
	& \leq \sum_{m}\frac{\epsilon}{2^m} = O(\epsilon).
	\end{split}
	\end{equation*}
	If $0<p_0<1$, then we apply the inequality \eqref{eq:minkowski inequality p<1} instead of \eqref{eq:minkowski inequality p>1} and obtain also 
	$$\|h(it)\|^{p_0}_{p_0, \tau_{N'}}
	\leq \sum_{m}\frac{\epsilon^{p_0}}{2^{m p_0}}
	= O(\epsilon^{p_0}).$$
	So in any case, we always have $\|h(it)\|_{p_0,  \tau_{N'}}\leq O(\epsilon)$.
	Similarly we have 
	$$\|h(1+it)\|_{p_1, \tau_{N'}}\leq O(\epsilon).$$
	Thus using Lemma \ref{lem:three-line-Haagerup}, we obtain
	$$\|h(\theta)\|_{p_\theta, \tau_{N'}} \leq O(\epsilon),$$
	and moreover
	\begin{equation*}
	\left\{
	\begin{aligned}
	& \|g(it)\|_{p_0, \tau_{N'}} \le 1+ O(\epsilon) , \\
	& \|g(1+it)\|_{p_1, \tau_{N'}} \le 1+ O(\epsilon) .
	\end{aligned}
	\right.
	\end{equation*}
	Set $\tilde{G}(z) := g(z) + h(\theta)$. Then it is easy to see that $\tilde{G}\in AF(\mathcal{R}_{N'})$ and $\tilde{G}(\theta) = G(\theta)$. Moreover,
	\begin{equation*}
	\left\{
	\begin{aligned}
	& \|\tilde{G}(it)\|_{p_0, \tau_{N'}} \le 1+ O(\epsilon) , \\
	& \|\tilde{G}(1+it)\|_{p_1, \tau_{N'}} \le 1+ O(\epsilon) .
	\end{aligned}
	\right.
	\end{equation*}
	Hence from Lemma \ref{lem:three-line-Haagerup} we have $\|\tilde{G}(\theta)\|_{p_\theta, \tau_{N'}} \leq 1+ O(\epsilon)$. Thus
	$$\|G(\theta)\|_{p_\theta, \hat{\phi}} 
	=\|G(\theta)\|_{p_\theta, \tau_{N'}}  =\|\tilde{G}(\theta)\|_{p_\theta, \tau_{N'}} \leq  1+ O(\epsilon).$$
	By letting $\epsilon \to 0^+$ we deduce our result.
\end{proof}

\subsection{Applications to some matrix inequalities}

In this subsection, let $\mathcal{M}$ denote the set of all $d \times d$ complex matrices. The following proposition extends Sutter et al.'s result \cite[Theorem 3.1]{SBT2017} (see also \cite[Lemma 3.3]{JRSWW2018}) to the quasi-Banach spaces case, which generalizes Hirschman's strengthening of the Hadamard three lines theorem \cite{Hir1952}.
\begin{proposition}\label{prop:gen-three-line}
	Let $G\in AF(\mathcal{M})$.  Assume that  $0 < p_0< p_1 \leq  \infty, $ and for $\theta \in [0, 1]$ define $p_\theta$ as in the equation \eqref{eq:conjugate-index}.
	Then we have
	\begin{equation}\label{eq:hirschman's inequality}
	\log \|G(\theta) \|_{p_\theta}  \leq \int_{-\infty}^{\infty} dt \left(  \beta_{1-\theta} (t) \log \|G(it)\|_{p_0}^{1-\theta} + \beta_\theta (t) \log \|G(1+it)\|_{p_1}^\theta \right),
	\end{equation}
	where $\beta_\theta (t) = \frac{\sin (\pi \theta)} {2 \theta (\cosh (\pi t) + \cos (\pi \theta))}.$
\end{proposition}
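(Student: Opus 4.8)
The plan is to reduce the statement to the Banach-space case $1\le p_0<p_1\le\infty$, which is exactly \cite[Theorem 3.1]{SBT2017} (see also \cite[Lemma 3.3]{JRSWW2018}), using the Riesz-factorization device already employed in the proof of Proposition \ref{prop:three-line}. Fix $n\in\mathbb{N}$ with $np_0\ge 1$; if already $p_0\ge 1$ one takes $n=1$ and the Banach case applies directly, so assume $p_0<1$. The scalar Hadamard--Hirschman inequality with the kernels $\beta_\theta$ is built into the Banach-case statement, so I only need to invoke it; the extra input is Corollary \ref{cor:cor of xu's result}, which factors $G$ into $n$ analytic pieces whose boundary $L_p$-norms are essentially the $n$-th roots of those of $G$. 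What matters is that I can take these factorization bounds \emph{pointwise} on $\partial S$, not merely for the supremum over $t$ as in Proposition \ref{prop:three-line}: this is available because Corollary \ref{cor:cor of xu's result}, being a consequence of Lemma \ref{lem:decomposition} (which controls the whole sequence of singular numbers), already asserts its conclusion for every $z\in\partial S$, and moreover one and the same factorization serves both exponents $r=p_0$ and $r=p_1$.

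Concretely, given $\epsilon>0$ I would, arguing as in the proof of Proposition \ref{prop:three-line} (iterating Corollary \ref{cor:cor of xu's result} with $\lambda$ chosen so the relevant exponents become $np_k$), produce $G_1,\dots,G_n\in AF(\mathcal{M})$ with $G=\prod_{j=1}^n G_j$ and, for all $t\in\mathbb{R}$, $k=0,1$ and $1\le j\le n$,
\begin{equation*}
\|G_j(k+it)\|_{np_k}\le \|G(k+it)\|_{p_k}^{1/n}+\epsilon.
\end{equation*}
Since $\tfrac{1}{p_\theta}=\sum_{j=1}^{n}\tfrac{1}{np_\theta}$, the iterated H\"older inequality gives $\|G(\theta)\|_{p_\theta}\le\prod_j\|G_j(\theta)\|_{np_\theta}$, hence $\log\|G(\theta)\|_{p_\theta}\le\sum_j\log\|G_j(\theta)\|_{np_\theta}$. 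Applying the Banach-case inequality (legitimate since $1\le np_0<np_1\le\infty$ and $\tfrac{1}{np_\theta}=\tfrac{1-\theta}{np_0}+\tfrac{\theta}{np_1}$) to each $G_j$, summing over $j$, and inserting the displayed bounds together with $\log a^{1-\theta}=(1-\theta)\log a$, one obtains
\begin{equation*}
\log\|G(\theta)\|_{p_\theta}\le\int_{-\infty}^{\infty}\Big(n(1-\theta)\beta_{1-\theta}(t)\log\big(\|G(it)\|_{p_0}^{1/n}+\epsilon\big)+n\theta\,\beta_\theta(t)\log\big(\|G(1+it)\|_{p_1}^{1/n}+\epsilon\big)\Big)\,dt.
\end{equation*}
Since $\mathcal{M}$ is finite-dimensional and $G$ is bounded on $\bar S$, the two integrands are bounded above by a constant independent of $\epsilon\in(0,1]$ and decrease as $\epsilon\downarrow 0$, so monotone convergence lets me pass to the limit under the integral; as $n\log(\|G(it)\|_{p_0}^{1/n}+\epsilon)\to\log\|G(it)\|_{p_0}$, letting $\epsilon\to 0^+$ and rewriting $(1-\theta)\beta_{1-\theta}(t)\log\|G(it)\|_{p_0}=\beta_{1-\theta}(t)\log\|G(it)\|_{p_0}^{1-\theta}$ (and likewise for the other term) yields \eqref{eq:hirschman's inequality}.

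The only genuinely delicate point is the passage to the limit under the integral sign: it rests on the uniform bound $\|G(it)\|_{p_0}\le c_{d,p_0}\sup_{z\in\bar S}\|G(z)\|<\infty$ (and similarly for $p_1$), which supplies the required integrable majorant and holds precisely because $\mathcal{M}=M_d(\mathbb{C})$ is finite-dimensional. The convention $\log 0=-\infty$ causes no difficulty — the inequality is trivial when $G(\theta)=0$, and otherwise $G\not\equiv 0$, so its boundary values vanish only on a Lebesgue-null set; if one prefers to avoid this bookkeeping one may run the argument with $G$ replaced by $G+\delta\,\Id$ and let $\delta\to 0^+$ at the end. Everything else — the iterated factorization, the iterated H\"older inequality, and the invocation of the Banach-case Hirschman bound \cite[Theorem 3.1]{SBT2017} — is routine.
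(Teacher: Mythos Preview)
Your proof is correct and follows essentially the same route as the paper's: factor $G$ via Corollary~\ref{cor:cor of xu's result} into $n$ pieces so that $np_0\ge 1$, apply the Banach-case Hirschman inequality \cite[Theorem~3.1]{SBT2017} to each factor, sum, and let $\epsilon\to 0^+$. You are more explicit than the paper about why the \emph{pointwise} (in $t$) factorization bounds are needed here (as opposed to the supremum bounds that sufficed for Proposition~\ref{prop:three-line}) and about the justification for passing to the limit under the integral, but the argument is the same.
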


\begin{proof}
	The proof is similar to that  of Proposition \ref{prop:three-line}. Choose an integer $n \in \mathbb{N}$ such that $n p_0\ge 1, np_1\ge 1$, then we have
	\begin{equation*}
	\begin{split}
	& \log \|G(\theta)\|_{p_\theta} \leq \sum_{k=1}^n  \log \|G_k (\theta)\|_{n p_\theta} \\
	& \leq \sum_{k=1}^n \int_{-\infty}^{\infty} dt \left(  \beta_{1-\theta} (t) \log \|G_k(it)\|_{n p_0}^{1-\theta} + \beta_\theta (t) \log \|G_k (1+it)\|_{n p_1}^\theta \right) \\
	& = \int_{-\infty}^{\infty} dt \left(  \beta_{1-\theta} (t) \log \prod_{k=1}^n \|G_k(it)\|_{n p_0}^{1-\theta} + \beta_\theta (t) \log \prod_{k=1}^n \|G_k(1+it)\|_{n p_1}^\theta \right)\\
	& \leq   \int_{-\infty}^{\infty} dt   \beta_{1-\theta} (t) \log \left( \|G(it)\|_{p_0}^{1/n}  +  \epsilon \right)^{n(1-\theta)}\\
	&+ \int_{-\infty}^{\infty} dt  \beta_\theta (t) \log \left( \|G(1+it)\|_{p_1}^{1/n} + \epsilon \right)^{n\theta},
	\end{split}
	\end{equation*}
	where $G_k\in AF(\mathcal{M}), k=1,\cdots, n$, are functions that are deduced by Corollary \ref{cor:cor of xu's result}. Note that for the first inequality we have used H\"{o}lder's inequality, and for the second one, we have used Theorem 3.1 in \cite{SBT2017}, which states that the inequality \eqref{eq:hirschman's inequality} is true for the Banach spaces case, i.e., $1\leq p_0,p_1 \leq \infty.$ We complete our proof by letting $\epsilon \to 0^+.$
\end{proof}

\begin{corollary}\label{cor:log}
	Let $0 < p \leq \infty, r \in ( 0, 1], n \in \mathbb{N},$ then for a family of positive semidefinite matrices $\{A_k\}_{k=1}^n \subset \mathcal{M}$ we have
	\begin{equation}\label{eq:log}
	\log \left\| \left| \prod_{k=1}^n A_k^r \right|^{\frac{1}{r}} \right\|_p \leq \int_{-\infty}^{\infty} dt \beta_r(t) \log \left\| \prod_{k=1}^n A_k^{1+it} \right\|_p.
	\end{equation}
\end{corollary}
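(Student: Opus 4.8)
The plan is to derive Corollary \ref{cor:log} from Proposition \ref{prop:gen-three-line} by choosing a suitable analytic function $G \in AF(\mathcal{M})$ so that the three endpoints $G(\theta)$, $G(it)$, $G(1+it)$ produce exactly the three quantities appearing in \eqref{eq:log}. The natural candidate, mimicking the classical Stein interpolation trick for products of powers, is
\[
G(z) := \prod_{k=1}^n A_k^{\,z}
       = A_1^{\,z} A_2^{\,z} \cdots A_n^{\,z},
\]
where $A_k^{\,z} = \exp(z \log A_k)$ is defined via the (finite-dimensional) holomorphic functional calculus on the positive semidefinite matrix $A_k$; one restricts to the subspace where $A_k$ is invertible (i.e. works on the range projection) so that $z \mapsto A_k^{\,z}$ is entire, and handles the kernels separately or by a limiting argument $A_k \to A_k + \delta I$, $\delta \to 0^+$. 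With this choice $G$ is entire, bounded on $\bar S$ (each factor has norm $\le \max(1,\|A_k\|)$ there), hence $G \in AF(\mathcal{M})$.

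Next I would set the interpolation parameters. We want the left endpoint ($\mathrm{Re}\,z = 0$) to carry the $L_{p_0}$-norm and the right endpoint ($\mathrm{Re}\,z = 1$) the $L_{p_1}$-norm, and the interior point to reproduce $\big\||\prod_k A_k^r|^{1/r}\big\|_p$. Take $\theta = r \in (0,1]$, and choose $p_0$ and $p_1$ so that $p_\theta = p$; the cleanest choice is $p_0 = p_1 = p$, which forces $p_\theta = p$ for every $\theta$, so that the hypothesis $0 < p_0 < p_1 \le \infty$ must be relaxed to $0 < p_0 \le p_1 \le \infty$ — a harmless extension of Proposition \ref{prop:gen-three-line}, since the factorization argument there goes through verbatim, or alternatively one perturbs $p_0 = p(1-\eta)$, $p_1 = p/(1+\text{something})$ and lets $\eta \to 0$. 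On the line $\mathrm{Re}\,z = 1$ we have $G(1+it) = \prod_k A_k^{1+it}$, giving exactly $\|\prod_k A_k^{1+it}\|_p$. On the line $\mathrm{Re}\,z = 0$ we have $G(it) = \prod_k A_k^{it}$, which is a product of unitaries (on the range of each $A_k$), so $\|G(it)\|_p = \|\mathrm{(identity\ on\ the\ relevant\ support)}\|_p$; this constant term contributes $0$ after taking $\log$ provided the support projection is the identity — hence the reduction to invertible $A_k$, or a normalization, is needed here. At the interior point $G(\theta) = G(r) = \prod_k A_k^{\,r}$, and since $\|x\|_p = \big\||x|\big\|_p = \big\||x|^{1/r}\big\|_p^{\,r}$ is false in general but $\|x\|_p^{1/r}$... — more precisely, one uses $\|\,|y|^{1/r}\,\|_p = \|y\|_{p/r}^{1/r}$, so I instead take $p_\theta = p/r$, i.e. choose the indices so that $1/p_\theta = r/p$, and then $\log\|G(r)\|_{p_\theta} = \log \big\|\prod_k A_k^r\big\|_{p/r} = r \log \big\|\,|\prod_k A_k^r|^{1/r}\,\big\|_p$. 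Plugging these three evaluations into \eqref{eq:hirschman's inequality} and dividing by... one recovers \eqref{eq:log} with the weight $\beta_r(t)$, after checking that $\beta_{1-\theta}(t)\log\|G(it)\|_{p_0}^{1-\theta} = 0$ (unitary endpoint) and $\beta_\theta(t)\log\|G(1+it)\|_{p_1}^{\theta}$ matches $\beta_r(t)\log\|\prod_k A_k^{1+it}\|_p$ up to the bookkeeping of the exponent $\theta = r$ versus the power $p$ versus $p_1 = p/r$. The constants $r$ on both sides then cancel.

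The main obstacle is the bookkeeping of indices together with the degenerate (non-invertible) case: one must align $\theta$, the three $L_p$-indices, and the exponents $r$, $1/r$ consistently so that the interior evaluation genuinely produces $\big\||\prod_k A_k^r|^{1/r}\big\|_p$ and the $\mathrm{Re}\,z=0$ boundary term genuinely vanishes. I expect to handle the kernels by first proving \eqref{eq:log} for strictly positive $A_k$ (where $z \mapsto A_k^z$ is entire and $A_k^{it}$ is unitary, so $\|G(it)\|_p$ is a fixed finite constant whose logarithm one subtracts using $\int \beta_r(t)\,dt = 1$), and then passing to general positive semidefinite $A_k$ by replacing $A_k$ with $A_k + \delta I$ and letting $\delta \to 0^+$, using continuity of all the norms involved in finite dimensions and monotone/dominated convergence for the integral on the right-hand side. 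One must also verify that the normalization integral $\int_{-\infty}^{\infty} \beta_\theta(t)\,dt = 1$ holds (this is classical, from the Poisson-kernel origin of $\beta_\theta$), so that an additive constant on the $\mathrm{Re}\,z = 0$ line can be absorbed; this is the only genuinely "analytic" input beyond Proposition \ref{prop:gen-three-line} itself.
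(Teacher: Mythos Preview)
Your overall strategy---define $G(z)=\prod_{k=1}^n A_k^{\,z}$ and feed it into Proposition~\ref{prop:gen-three-line}---is exactly what the paper does.  Where you go astray is the choice of indices.  The paper takes $\theta=r$, $p_0=\infty$, $p_1=p$.  Then $1/p_\theta=(1-r)/\infty+r/p=r/p$, so $p_\theta=p/r$ and
\[
\log\|G(r)\|_{p_\theta}=\log\Bigl\|\prod_k A_k^{\,r}\Bigr\|_{p/r}=r\,\log\Bigl\|\,\bigl|\textstyle\prod_k A_k^{\,r}\bigr|^{1/r}\Bigr\|_p .
\]
On the line $\mathrm{Re}\,z=0$ one has $\|G(it)\|_\infty=\|\prod_k A_k^{\,it}\|_\infty\le 1$ (each $A_k^{\,it}$ is a partial isometry, a unitary in the invertible case), so $\log\|G(it)\|_\infty\le 0$ and that boundary term drops out; on $\mathrm{Re}\,z=1$ one gets $r\log\|\prod_k A_k^{\,1+it}\|_p$.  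Dividing through by $r$ gives \eqref{eq:log}.  No $\delta$-perturbation, no normalization of $\beta_\theta$, and no extra analytic input is needed.

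Your two alternative index choices do not work.  With $p_0=p_1=p$ you get $p_\theta=p$, not $p/r$, so the left-hand side becomes $\log\|\prod_k A_k^{\,r}\|_p$ rather than the desired $r\log\||\prod_k A_k^{\,r}|^{1/r}\|_p$.  With any finite $p_0$ the boundary value $\|G(it)\|_{p_0}$ equals $d^{1/p_0}>1$ in dimension $d>1$ (invertible case), and its logarithm contributes the strictly positive constant $(1-r)\tfrac{\log d}{p_0}$ to the right-hand side.  The identity $\int\beta_{1-\theta}(t)\,dt=1$ tells you what this constant \emph{is}, but it does not let you ``subtract'' or ``absorb'' it: you simply obtain a weaker inequality than \eqref{eq:log}.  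The choice $p_0=\infty$ is the one trick that makes everything line up.
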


\begin{proof}
	We use the same idea of \cite[Theorem 3.2]{SBT2017}.  Define the function
	$$G(z) : = \prod_{k=1}^n A_k^z.$$
	It satisfies the conditions of Proposition \ref{prop:gen-three-line}. Furthermore we pick $\theta =r, p_0=\infty$ and $p_1=p$. Then $p_\theta = p/r$ and we have
	\begin{equation*}
	\left\{
	\begin{aligned}
	& \log \|G(it)\|_{p_0}^{ 1-\theta} = (1-r)\log \left\| \prod_{k=1}^n A_k^{it} \right\|_\infty =0,\\
	& \log \|G(1+it)\|_{p_1}^\theta = r \log \left\| \prod_{k=1}^n A_k^{1+it} \right\|_p,\\
	& \text{and} \; \log \|G(\theta)\|_{p_\theta} = r \log \left\|  \left| \prod_{k=1}^n A_k^r \right|^{\frac{1}{r}} \right\|_p.
	\end{aligned}
	\right.
	\end{equation*}
	Plugging the above equations into Proposition \ref{prop:gen-three-line} deduces the corollary.
\end{proof}

\begin{remark}
	In \cite{SBT2017}, Hiai et al. proved the equation \eqref{eq:log} for any unitarily invariant norm $\| \cdot \|$ on $\mathcal{M}$ in an alternative way, which includes the conclusion of Corollary \ref{cor:log}.
\end{remark}


\section{Interpolation of noncommutative $L_p$-spaces.}

Let $\mathcal{M}$ be a ($\sigma$-finite) von Neumann algebra with a n.s.f. state $\phi.$ In this section, we will use the notation $L_p(\mathcal{M})$ for $L_p(\mathcal{M}, \phi)$ if there is no ambiguity. By embedding $\mathcal{M}_a$ into $L_p(\mathcal{M})$ via the map $x \mapsto h_\phi^{1/2p} x h_\phi^{1/2p},$ we can define our interpolation space as follows.
For $x \in \mathcal{M}_a$ Set
$$\mathcal{F} (L_{p_0}(\mathcal{M}), L_{p_1}(\mathcal{M})) := \left\{f: f(z) = \sum_{k=1}^m f_k(z) x_k, x_k \in \mathcal{M}_a, f_k \in A(S),m\in\mathbb{N} \right\},$$
equipped with the norm
$$\|f\|_{\mathcal{F} (L_{p_0}(\mathcal{M}), L_{p_1}(\mathcal{M}))} = \sup_{t \in \mathbb{R}} \left\{ \|f(it)\|_{p_0, \phi}, \|f(1+it)\|_{p_1, \phi} \right\},$$
For $0 < \theta <1$ we define the complex interpolation (quasi) norm $\|\cdot\|_\theta$ on $\mathcal{M}_a$ as follows: for $x \in \mathcal{M}_a$,
\begin{equation*}
\|x\|_{\theta}:= \inf \left\{ \|f\|_{\mathcal{F} (L_{p_0}(\mathcal{M}), L_{p_1}(\mathcal{M}))}: f(\theta) =x, f\in \mathcal{F} (L_{p_0}(\mathcal{M}), L_{p_1}(\mathcal{M})) \right\}.
\end{equation*}
Then the complex interpolation space $[L_{p_0}(\mathcal{M}), L_{p_1}(\mathcal{M})]_\theta$ is defined as the completion of $(\mathcal{M}_a, \|\cdot\|_\theta)$. The following theorem extends Xu' s result \cite {X1990} to (non-tracial) state case. 

\begin{theorem}\label{thm:interpolation2}
	Let $\mathcal{M}$ be a ($\sigma$-finite) von Neumann algebra with a n.f. state $\phi.$ For $0< p_0 <p_1 \leq \infty$, we have
	\begin{equation}
	[ L_{p_0} (\mathcal{M}) , L_{p_1}(\mathcal{M})]_{\theta} = L_{p_\theta} (\mathcal{M}),
	\end{equation}
	where $\frac{1}{p_\theta} = \frac{1-\theta}{p_0}+ \frac{\theta}{p_1}.$
\end{theorem}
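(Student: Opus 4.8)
The strategy is the standard two-inclusion argument for identifying a complex interpolation space, now carried out for the quasi-Banach Haagerup $L_p$-scale by combining the Hadamard three lines theorem of Proposition~\ref{prop:three-line-Haagerup} with Xu's reversed H\"older inequality (Corollary~\ref{cor:cor of xu's result}). One inclusion, namely $\|x\|_{p_\theta,\phi}\le \|x\|_\theta$ for $x\in\mathcal{M}_a$, is immediate: given any $f\in\mathcal{F}(L_{p_0}(\mathcal{M}),L_{p_1}(\mathcal{M}))$ with $f(\theta)=x$, write $f(z)=\sum_k f_k(z)x_k=:G(z)$ with $G\in AF(\mathcal{M})$ (after passing to $h_\phi^{1/(2p_z)}\cdot h_\phi^{1/(2p_z)}$ one is exactly in the situation of that proposition), and apply Proposition~\ref{prop:three-line-Haagerup} to get $\|x\|_{p_\theta,\phi}=\|G(\theta)\|_{p_\theta,\phi}\le M_0^{1-\theta}M_1^\theta\le\|f\|_{\mathcal{F}}$. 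Taking the infimum over $f$ gives $\|x\|_{p_\theta,\phi}\le\|x\|_\theta$, hence the formal identity $L_{p_\theta}(\mathcal{M})$ embeds contractively into $[L_{p_0}(\mathcal{M}),L_{p_1}(\mathcal{M})]_\theta$ once one checks $\mathcal{M}_a h_\phi^{1/2p_\theta}$ is dense in $L_{p_\theta}(\mathcal{M})$, which is Lemma~\ref{lem:dense}(ii).

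\textbf{The reverse inclusion.} The harder direction is $\|x\|_\theta\le\|x\|_{p_\theta,\phi}$ for $x\in\mathcal{M}_a$, i.e.\ constructing, for $y=h_\phi^{1/(2p_\theta)}x h_\phi^{1/(2p_\theta)}\in L_{p_\theta}(\mathcal{M})$ of norm one, an admissible function $f$ with $f(\theta)=x$ and $\|f\|_{\mathcal{F}}\le 1+\epsilon$. Here I would again invoke the Haagerup reduction theorem (Theorem~\ref{thm:reduction}) to transport the problem to the crossed product $\mathcal{R}=\mathcal{M}\rtimes_{\sigma^\phi}G$ and, using density of $\bigcup_n\Lambda_{p}(\mathcal{R}_n,\tau_n)$, reduce to the \emph{finite tracial} setting $(\mathcal{R}_N,\tau_N)$. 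In that setting $L_{p_\theta}(\mathcal{R}_N,\tau_N)$ is just the tracial space $\Lambda_{p_\theta}(\mathcal{R}_N,\tau_N)$, and Xu's interpolation result \eqref{eq:Xu-result} provides an analytic $AF(\mathcal{R}_N)$-valued function realizing the interpolation norm; concretely, for an element of $\Lambda_{p_\theta}$ with polar-type decomposition one builds $F(z)$ by inserting the powers $|{\cdot}|^{p_\theta/p_z}$ (analytic in $z$ since $\mathcal{R}_N$ is finite-dimensional-like in the relevant spectral sense), with $\|F(it)\|_{p_0}\le1$, $\|F(1+it)\|_{p_1}\le1$, $F(\theta)=y$. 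Pulling this back through Kosaki's embedding $h_\phi^{1/(2p_z)}\cdot h_\phi^{1/(2p_z)}$ — using \eqref{eq:proof of lem of Junge-Xu} and Lemma~\ref{lem:dense}(i) to see that such products stay of the form $\mathcal{M}_a h_\phi^{1/p_z}$ — yields the required $f\in\mathcal{F}(L_{p_0}(\mathcal{M}),L_{p_1}(\mathcal{M}))$ up to an $\epsilon$-error absorbed by the approximation steps. Letting $N\to\infty$ and $\epsilon\to0^+$ completes the inclusion, and the two contractive inclusions give the isometric identification after completion.

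\textbf{Main obstacle.} The delicate point is bookkeeping the passage between the three descriptions of the spaces — Haagerup $L_p$ on $\mathcal{M}$, on $\mathcal{R}$, and tracial $L_p$ on the approximating $\mathcal{R}_n$ — while keeping the analytic functions genuinely in the correct $AF$-class and compatible with the Kosaki-type embeddings $z\mapsto h_\phi^{1/(2p_z)}\cdot h_\phi^{1/(2p_z)}$ (whose defining index $p_z$ varies along the strip). In particular one must ensure the interpolating function produced on the tracial side, when conjugated by the varying density powers, remains a finite sum $\sum_k f_k(z)x_k$ with $x_k\in\mathcal{M}_a$ and $f_k\in A(S)$, rather than merely a limit of such; this is exactly where Lemma~\ref{lem:dense} and the boundedness/analyticity of $t\mapsto\sigma_t^\phi$ on $\mathcal{M}_a$ are used, together with the density statements in Theorem~\ref{thm:reduction}. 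The quasi-Banach feature ($p_0<1$) intervenes only through the weak triangle inequalities \eqref{eq:minkowski inequality p<1}, handled exactly as in the proof of Proposition~\ref{prop:three-line-Haagerup}, so it poses no extra conceptual difficulty beyond replacing sums by $p$-sums in the error estimates.
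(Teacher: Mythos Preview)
Your easy inclusion $[L_{p_0},L_{p_1}]_\theta\subseteq L_{p_\theta}$ via Proposition~\ref{prop:three-line-Haagerup} matches the paper exactly (note though that the inequality $\|x\|_{p_\theta,\phi}\le\|x\|_\theta$ gives $[L_{p_0},L_{p_1}]_\theta\hookrightarrow L_{p_\theta}$, not the direction you wrote). The hard inclusion, however, has a genuine gap.

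Your plan is to pass to $\mathcal{R}=\mathcal{M}\rtimes_{\sigma^\phi}G$, approximate in some $\mathcal{R}_N$, build the interpolating function there via Xu's tracial result, and then ``pull back''. The problem is the pulling back. The function you construct takes values in $\mathcal{R}_N\subset\mathcal{R}$, whereas membership in $\mathcal{F}(L_{p_0}(\mathcal{M}),L_{p_1}(\mathcal{M}))$ requires the coefficients $x_k$ to lie in $\mathcal{M}_a$. The only candidate for a retraction is the $\hat\phi$-preserving conditional expectation onto $\mathcal{M}$, and while Theorem~\ref{thm:reduction}(iv) gives $1$-complementation of $Y_p$ for $1\le p<\infty$, conditional expectations are in general \emph{unbounded} on $L_p$ for $p<1$ (already in the commutative case: on $L_p[0,1]$ take $f_n=n\chi_{[0,1/n]}$). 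Since your boundary values on $\operatorname{Re}z=0$ live in $L_{p_0}$ with $p_0<1$, applying the expectation destroys the norm control, and you cannot conclude that the $\mathcal{M}$-interpolation norm of $x$ is small from the $\mathcal{R}$-interpolation norm being small. Lemma~\ref{lem:dense} and the density statements in Theorem~\ref{thm:reduction} do not address this: they produce analytic elements inside whatever algebra you are in, not a way to descend from $\mathcal{R}$ to $\mathcal{M}$.

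The paper avoids this entirely by working directly in $\mathcal{M}$ with an iterative bootstrapping. For $\tfrac12\le p_0<1$ and $x\in\mathcal{M}_a$, write $x=u|x|=(u|x|^{1/2})(|x|^{1/2})=:x_1x_2$, approximate each factor in operator norm by $y_k\in\mathcal{M}_a$, and set $z_k=\sigma^\phi_{\mp i/(4p_\theta)}(y_k)$ so that $\|z_k\|_{2p_\theta,\phi}\le\|x\|_{p_\theta,\phi}^{1/2}+\epsilon$. Since $2p_0\ge 1$, the Banach case (Kosaki) gives $[L_{2p_0}(\mathcal{M}),L_{2p_1}(\mathcal{M})]_\theta=L_{2p_\theta}(\mathcal{M})$, so there exist $f_k\in\mathcal{F}(L_{2p_0}(\mathcal{M}),L_{2p_1}(\mathcal{M}))$ with $f_k(\theta)=z_k$ and controlled $\mathcal{F}$-norm. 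Then
\[
f(z)=\sigma^\phi_{i/(4p_z)}(f_1(z))\,\sigma^\phi_{-i/(4p_z)}(f_2(z))-y_1y_2+x
\]
lies in $\mathcal{F}(L_{p_0}(\mathcal{M}),L_{p_1}(\mathcal{M}))$, satisfies $f(\theta)=x$, and by H\"older together with \eqref{eq:minkowski inequality p<1} has $\mathcal{F}$-norm at most $\|x\|_{p_\theta,\phi}+O(\epsilon)$. Iterating this halving step reaches every $p_0>0$. Haagerup reduction is used only inside Proposition~\ref{prop:three-line-Haagerup}, where no ``return to $\mathcal{M}$'' is needed; for the inclusion $L_{p_\theta}\subseteq[L_{p_0},L_{p_1}]_\theta$ it plays no role.
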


\begin{proof}
	The case of $1\leq p_0 < p_1 \leq \infty$ is basically Kosaki's interpolation theorem  \cite{K1984}. 
	
	The proof for the case $0< p_0 <1$ is an adaption of Xu' result \cite{X1990}. On one hand, we prove that
	\begin{equation}\label{eq:inclusion1}
	L_{p_\theta} (\mathcal{M}) \subseteq [ L_{p_0} (\mathcal{M}) , L_{p_1}(\mathcal{M})]_{\theta}.
	\end{equation}
	For this it is sufficient to show
	$\|x\|_\theta \leq \|x\|_{p_\theta, \phi}$ for any $x \in \mathcal{M}_a.$
	
	Firstly we consider the case $\frac{1}{2} \leq p_0 <1.$ To this end, using the polar decomposition, $x \in \mathcal{M}_a$ can be written as
	$$x = u |x| = u |x|^{\frac{1}{2}} |x|^{\frac{1}{2}}=x_1 x_2,$$
	where $x_1 = u |x|^{\frac{1}{2}}$ and $x_2 = |x|^{\frac{1}{2}}.$ Since $\mathcal{M}_a$ is $w^*$-dense in $\mathcal{M},$ for any $\epsilon >0$ and $k=0,1$, there exists $y_k \in \mathcal{M}_a$ such that 
	$$\|x_k - y_k\| \leq \epsilon.$$
	Thus 
	\begin{equation*}
	\left\|h_\phi^{\frac{1}{2p_\theta}} (x_1 -y_1) \right\|_{L_{2p_\theta}(\mathcal{M})}
	\leq \|x_1- y_1\| \left\|h_\phi^{\frac{1}{2p_\theta}}\right\|_{L_{2p_\theta}(\mathcal{M})}
	= \|x_1- y_1\| \leq \epsilon,
	\end{equation*}
	and similarly
	$$\left\|(x_2 -y_2) h_\phi^{\frac{1}{2p_\theta}} \right\|_{L_{2p_\theta}(\mathcal{M})} \leq \|x_2- y_2\| \leq \epsilon.$$
	On the other hand, by \eqref{eq:p norm and 2p norm}, we have
	
	\begin{equation*}
	\begin{split}
	\left\| h_\phi^{\frac{1}{2p_\theta}} x_1 \right\|_{L_{2p_{\theta}}(\mathcal{M})} 
	=  \left\|h_\phi^{\frac{1}{2p_\theta}} u |x| u^* h_\phi^{\frac{1}{2p_\theta}} \right\|_{L_{p_{\theta}}(\mathcal{M})}^{\frac{1}{2}} 
	= \|x\|_{p_\theta, \phi}^{\frac{1}{2}},
	\end{split}
	\end{equation*}
	and
	\begin{equation*}
	\begin{split}
	\left\| x_2 h_\phi^{\frac{1}{2p_\theta}} \right\|_{L_{2p_{\theta}}(\mathcal{M})} 
	=  \left\|  h_\phi^{\frac{1}{2p_\theta}} |x| h_\phi^{\frac{1}{2p_\theta}}\right\|_{L_{p_{\theta}}(\mathcal{M})}^{\frac{1}{2}} 
	= \|x\|_{p_\theta, \phi}^{\frac{1}{2}}.
	\end{split}
	\end{equation*}
	Therefore 
	$$\left\| h_\phi^{\frac{1}{2p_\theta}} y_1\right\|_{L_{2p_\theta}(\mathcal{M})} \leq \|x\|_{p_\theta, \phi}^{\frac{1}{2}} + \epsilon,$$
	and
	$$\left\| y_2 h_\phi^{\frac{1}{2p_\theta}}\right\|_{L_{2p_\theta}(\mathcal{M})} \leq \|x\|_{p_\theta, \phi}^{\frac{1}{2}} + \epsilon.$$

	
	
	Let $z_1 = \sigma^\phi_{-\frac{i}{4p_\theta}} (y_1)$ and $z_2 = \sigma^\phi_{\frac{i}{4p_\theta}} (y_2),$ then due to the equality \eqref{eq:proof of lem of Junge-Xu} we have $h_\phi^{\frac{1}{4p_\theta}} z_1 h_\phi^{\frac{1}{4p_\theta}} = h_\phi^{\frac{1}{2p_\theta}} y_1$ and 
	$h_\phi^{\frac{1}{4p_\theta}} z_2 h_\phi^{\frac{1}{4p_\theta}} = y_2 h_\phi^{\frac{1}{2p_\theta}}.$ Moreover, $z_k \in \mathcal{M}_a$ and 
	$$\|z_k\|_{2p_\theta, \phi}  \leq \|x\|_{p_\theta, \phi}^{\frac{1}{2}} + \epsilon, k =1, 2.$$

	By the first part, $[L_{2p_0}(\mathcal{M}) , L_{2p_1}(\mathcal{M})]_\theta = L_{2p_\theta}(\mathcal{M})$. Then for any $\epsilon>0$ and $k=1, 2$, there exists $f_k \in \mathcal{F}(L_{2p_0}(\mathcal{M}) , L_{2p_1}(\mathcal{M})),$ such that $f_k(\theta) = z_k,$ and 
	\begin{equation*}
	\begin{split}
	\|f_k\|_{ \mathcal{F}(L_{2p_0}(\mathcal{M}) , L_{2p_1}(\mathcal{M}))} & \leq \| z_k  \|_{2p_\theta, \phi}+\epsilon \\
	& \leq  \|x\|_{p_\theta, \phi}^{\frac{1}{2}}+\epsilon.
	\end{split}
	\end{equation*}
	Now consider
	$$f(z):=  \sigma^\phi_{\frac{i}{4p_z}} (f_1(z))  \sigma^\phi_{-\frac{i}{4p_z}} ( f_2 (z)) - y_1y_2 +x.$$
	It is easy to see that $f$ is analytic in $S$, continuous on $\bar{S}$, and $f(\theta) =x$. 
	Moreover, from \eqref{eq:minkowski inequality p<1} and H\"{o}lder's inequality \eqref{eq:holder} it follows that
	\begin{equation*}
	\begin{split}
	\|f(it)\|_{p_0, \phi}^{p_0} & \leq \left\|h_\phi^{\frac{1}{2p_0}}  \sigma^\phi_{\frac{i}{4p_0}} (f_1(it))  \sigma^\phi_{-\frac{i}{4p_0}} ( f_2 (it)) h_\phi^{\frac{1}{2p_0}} \right\|_{L_{p_0}(\mathcal{M})}^{p_0} + \|x -y_1y_2 \|_{p_0, \phi}^{p_0}\\
	& \leq \left\|h_\phi^{\frac{1}{4p_0}}  f_1(it) h_\phi^{\frac{1}{2p_0}}  f_2 (it) h_\phi^{\frac{1}{4p_0}} \right\|_{L_{p_0}(\mathcal{M})}^{p_0} + \|x -y_1y_2 \|_{p_0, \phi}^{p_0}\\
	& \leq \|f_1(it)\|_{2p_0, \phi}^{p_0}  \|f_2(it)\|_{2p_0, \phi}^{p_0} + \|x -y_1y_2 \|_{p_0, \phi}^{p_0}\\
	& \leq \|x\|_{p_\theta, \phi}^{p_0} + \|x -y_1y_2 \|_{p_0, \phi}^{p_0} + O(\epsilon)\\
	& \leq \|x\|_{p_\theta, \phi}^{p_0}  + \|x -y_1y_2 \|^{p_0} + O(\epsilon)\\
	& \leq \|x\|_{p_\theta, \phi}^{p_0} + O(\epsilon).
	\end{split}
	\end{equation*} 
	
	Similarly we have (replace \eqref{eq:minkowski inequality p<1} with \eqref{eq:minkowski inequality p>1} if necessary)
	$$ \|f(1+it)\|_{p_1, \phi} \leq \|x\|_{p_\theta, \phi}+O(\epsilon).$$ 
	Therefore
	$$\|x\|_\theta 
	\leq \|f \|_{\mathcal{F}(L_{p_0}(\mathcal{M}) , L_{p_1}(\mathcal{M}))} 
	\leq \|x\|_{p_\theta, \phi}+O(\epsilon).$$
	Thus we can conclude the inclusion \eqref{eq:inclusion1} for $\frac{1}{2} \leq p_0 < 1$ by letting $\epsilon\to 0^+$.
	
	By repeating the above arguments we can obtain the inclusion \eqref{eq:inclusion1} for $\frac{1}{4} \leq p_0< \frac{1}{2}.$ Thus by iterating this procedure, we get the inclusion \eqref{eq:inclusion1} for all $0 < p_0<1.$
	
	On the other hand, we show that
	\begin{equation}\label{eq:inclusion2}
	[ L_{p_0} (\mathcal{M}) , L_{p_1}(\mathcal{M})]_{\theta} \subseteq L_{p_\theta} (\mathcal{M}).
	\end{equation}
	To this end, it suffices to show that $\|x\|_{p_\theta, \phi} \leq \|x\|_\theta$ for any $x \in \mathcal{M}_a.$ Thus for any $f \in \mathcal{F}(L_{p_0}(\mathcal{M}), L_{p_1}(\mathcal{M}))$ such that
	$f(\theta) =x,$ we will show
	$$\|x\|_{p_\theta, \phi} \leq \|f\|_{\mathcal{F}(L_{p_0}(\mathcal{M}), L_{p_1}(\mathcal{M}))}.$$
	Without loss of generality, suppose $\|f\|_{\mathcal{F}( L_{p_0}(\mathcal{M}), L_{p_1}(\mathcal{M}))} \leq 1.$ Therefore using Proposition \ref{prop:three-line-Haagerup}, we have
	\begin{equation*}
	\|x\|_{p_\theta, \phi} 
	= \left\|  f(\theta) \right\|_{p_\theta, \phi}\\
	\leq \|f(it)\|_{p_0, \phi}^{1-\theta} \|f(1+it)\|_{p_1, \phi}^\theta \leq 1,
	\end{equation*}
	which completes our proof.
\end{proof}

\begin{remark}
	\begin{enumerate}[(i)] 
		\item Instead of the symmetric embedding for $\mathcal{M}$ into $L_p(\mathcal{M}, \phi)$, our interpolation also holds for any embedding $x \mapsto  h_\phi^{\frac{1-\eta}{2p}}  x h_\phi^{\frac{\eta}{2p}}$
		as in Lemma  \ref{lem:dense}, $0\leq \eta \leq1.$ 
		\item The n.f. state $\phi$ can be replaced by a weight. 
	\end{enumerate}
\end{remark}

We have the following direct corollaries.

\begin{corollary}\label{cor:fixed}
	Let $0 < p_0 < p_1 \leq \infty, $ and for $\theta \in [0, 1]$ define $p_\theta$ as in the equation \eqref{eq:conjugate-index}. For any $x \in \mathcal{M}$ we have
	\begin{equation*}
	\left\|h^{\frac{1}{2p_\theta}}_\phi xh^{\frac{1}{2p_\theta}}_\phi \right\|_{L_{p_\theta}(\mathcal{M})} \leq \left\| h_\phi^{\frac{1}{2p_0}} x h_\phi^{\frac{1}{2p_0}} \right\|_{L_{p_0}(\mathcal{M})}^{1-\theta} \left\|h_\phi^{\frac{1}{2p_1}} x h_\phi^{\frac{1}{2p_1}}\right\|_{L_{p_1}(\mathcal{M})}^{\theta}.
	\end{equation*}
\end{corollary}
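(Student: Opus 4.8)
The plan is to derive Corollary~\ref{cor:fixed} directly from Theorem~\ref{thm:interpolation2} together with the Hadamard three lines estimate of Proposition~\ref{prop:three-line-Haagerup}. First I would fix $x\in\mathcal{M}$ and note that, since $\mathcal{M}_a$ is $w^*$-dense in $\mathcal{M}$ and the relevant maps $y\mapsto h_\phi^{1/2p}yh_\phi^{1/2p}$ are bounded from $(\mathcal{M},\|\cdot\|)$ into each $L_p(\mathcal{M})$ (as used repeatedly in the proof of Theorem~\ref{thm:interpolation2}), it suffices by an approximation argument to prove the inequality for $x\in\mathcal{M}_a$; the general case then follows by passing to the limit on both sides. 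So from now on I assume $x\in\mathcal{M}_a$.

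Next I would build the competitor function for the interpolation. Using the identity \eqref{eq:proof of lem of Junge-Xu} one can write $h_\phi^{1/2p_z}xh_\phi^{1/2p_z}$ as $h_\phi^{1/2p_z}\,\sigma^\phi_{\text{(something)}}(x)\,h_\phi^{1/2p_z}$ in a way that depends analytically on $z$; more concretely, set $G(z):=\sigma^\phi_{g(z)}(x)$ for an appropriate affine-in-$z$ choice of $g$ so that $G\in AF(\mathcal{M})$ and the embedded element $h_\phi^{1/2p_z}G(z)h_\phi^{1/2p_z}$ equals (a unitary rotation of) the symmetric embedding $i_{p_z}(x)$. In fact the cleanest route avoids cocycles entirely: simply take $G(z)\equiv x$ and apply Proposition~\ref{prop:three-line-Haagerup} to the constant function $G$, with $p_0,p_1$ as given. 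Then $M_k=\|x\|_{p_k,\phi}=\|h_\phi^{1/2p_k}xh_\phi^{1/2p_k}\|_{L_{p_k}(\mathcal{M})}$ for $k=0,1$, and the conclusion $\|G(\theta)\|_{p_\theta,\phi}\le M_0^{1-\theta}M_1^\theta$ is exactly
\[
\left\|h^{\frac{1}{2p_\theta}}_\phi xh^{\frac{1}{2p_\theta}}_\phi \right\|_{L_{p_\theta}(\mathcal{M})} \leq \left\| h_\phi^{\frac{1}{2p_0}} x h_\phi^{\frac{1}{2p_0}} \right\|_{L_{p_0}(\mathcal{M})}^{1-\theta} \left\|h_\phi^{\frac{1}{2p_1}} x h_\phi^{\frac{1}{2p_1}}\right\|_{L_{p_1}(\mathcal{M})}^{\theta}.
\]
This already gives the result for $x\in\mathcal{M}_a$, and hence (by the approximation in the first paragraph, if one wants it literally for all $x\in\mathcal{M}$) in general. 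Alternatively, one deduces it from Theorem~\ref{thm:interpolation2}: the element $x$ viewed in $[L_{p_0}(\mathcal{M}),L_{p_1}(\mathcal{M})]_\theta=L_{p_\theta}(\mathcal{M})$ has interpolation norm controlling the $L_{p_\theta}$-norm, and feeding the constant function $z\mapsto x$ into the definition of $\|\cdot\|_\theta$ bounds that norm by $\sup_t\|i_{p_0}(x)\|_{p_0}$ and $\sup_t\|i_{p_1}(x)\|_{p_1}$ via the three-line lemma — but since Proposition~\ref{prop:three-line-Haagerup} is already stated and proved, invoking it directly is shorter.

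I expect essentially no obstacle here: the statement is a one-line consequence of Proposition~\ref{prop:three-line-Haagerup} applied to a constant function, and the only mild point to address is whether one needs $x\in\mathcal{M}_a$ or may take $x\in\mathcal{M}$ outright. Since Proposition~\ref{prop:three-line-Haagerup} is stated for $G\in AF(\mathcal{M})$ and a constant map into all of $\mathcal{M}$ certainly lies in $AF(\mathcal{M})$, the inequality holds verbatim for every $x\in\mathcal{M}$ with no density argument needed at all; the quasi-Banach range $0<p_0<1$ is already absorbed into Proposition~\ref{prop:three-line-Haagerup}. Thus the proof reduces to: \emph{apply Proposition~\ref{prop:three-line-Haagerup} to the constant function $G\equiv x$ and unwind the definition of $\|\cdot\|_{p_k,\phi}$.}
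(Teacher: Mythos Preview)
Your proposal is correct and matches the paper's proof exactly: the paper simply applies Proposition~\ref{prop:three-line-Haagerup} to the constant function $G(z)=x$. You take some unnecessary detours through density arguments and modular automorphisms before arriving at this, but your final observation---that a constant map into $\mathcal{M}$ lies in $AF(\mathcal{M})$ so no approximation is needed---is spot on and is all the proof requires.
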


\begin{proof}
	By taking the map $G(z) =  x $ in Proposition \ref{prop:three-line-Haagerup} we can deduce our result.
\end{proof}

Let $\mathcal{M}, \mathcal{N}$ be two ($\sigma$-finite) von Neumann algebras, and $\phi$ (resp. $\psi$) be a n.f. state on $\mathcal{M}$ (resp. $\mathcal{N}$). Let $T: \mathcal{M} \to \mathcal{N}$ be a linear map. Then we define an operator norm $\|T\|_{(p, \phi) \to (q, \psi)}$ by
$$\|T\|_{(p, \phi) \to (q, \psi)} = \sup_{X \neq 0} \frac{\|T(X)\|_{q, \psi}}{\|X\|_{p, \phi}}.$$	

\begin{corollary}\label{cor:RT}
	Let $\mathcal{M}, \mathcal{N}$ be two ($\sigma$-finite) von Neumann algebras, and $\phi$ (resp. $\psi$) be a n.s.f. state on $\mathcal{M}$ (resp. $\mathcal{N}$). Let $T: \mathcal{M} \to \mathcal{N}$ be a linear map. 	
	Assume that $0 < p_0 < p_1 \leq \infty, 0< q_0 < q_1 \leq \infty$ and $p_\theta, q_\theta$ satisfy the equation \eqref{eq:conjugate-index}. Then we have
	\begin{equation*}
	\begin{split}
	\|T\|_{(p_\theta, \phi) \to (q_\theta, \psi)}  \leq \|T\|_{(p_0, \phi) \to (q_0, \psi)}^{1-\theta} \cdot \|T\|_{(p_1, \phi) \to (q_1, \psi)} ^\theta.
	\end{split}
	\end{equation*}
\end{corollary}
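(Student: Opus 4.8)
The plan is to obtain Corollary \ref{cor:RT} as a standard consequence of the complex interpolation theorem \ref{thm:interpolation2} together with the compatibility of Haagerup $L_p$-spaces established via Proposition \ref{prop:embed1}. The basic principle is that a bounded linear map between two interpolation couples interpolates; one only needs to check that $T$ indeed maps the relevant couples compatibly and that the interpolation spaces identify with the noncommutative $L_p$-spaces in the right (quasi) norms. Since Theorem \ref{thm:interpolation2} gives $[L_{p_0}(\mathcal{M}),L_{p_1}(\mathcal{M})]_\theta = L_{p_\theta}(\mathcal{M})$ and $[L_{q_0}(\mathcal{N}),L_{q_1}(\mathcal{N})]_\theta = L_{q_\theta}(\mathcal{N})$, the whole argument reduces to running the elementary interpolation-of-operators lemma in the quasi-Banach category.

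First I would set up $T$ as a map on the embedded copies: via Kosaki's embeddings $x\mapsto h_\phi^{1/2p}xh_\phi^{1/2p}$ and $y\mapsto h_\psi^{1/2q}yh_\psi^{1/2q}$, $T$ induces (on the dense subspace $\mathcal{M}_a$, or just $\mathcal{M}$) maps $T_k: L_{p_k}(\mathcal{M})\to L_{q_k}(\mathcal{N})$ for $k=0,1$ with $\|T_k\| = \|T\|_{(p_k,\phi)\to(q_k,\psi)}=:N_k$; these are consistent on $\mathcal{M}_a$ because they are all induced by the single algebraic map $T$, so $(T_0,T_1)$ is a morphism of the compatible couples $(L_{p_0}(\mathcal M),L_{p_1}(\mathcal M))$ and $(L_{q_0}(\mathcal N),L_{q_1}(\mathcal N))$. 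Next, fix $x\in\mathcal{M}_a$ with $\|x\|_{p_\theta,\phi}<1$ (after rescaling), and use the definition of the interpolation (quasi) norm: for any $\epsilon>0$ pick $f\in\mathcal{F}(L_{p_0}(\mathcal M),L_{p_1}(\mathcal M))$ with $f(\theta)=x$ and $\|f\|_{\mathcal F}\le 1+\epsilon$. Apply $T$ coordinatewise to the finite sum defining $f$, i.e.\ $g(z):=\sum_k f_k(z)\,T(x_k)$, which lies in $\mathcal{F}(L_{q_0}(\mathcal N),L_{q_1}(\mathcal N))$ and satisfies $g(\theta)=T(x)$. On the boundary lines, $\|g(it)\|_{q_0,\psi}=\|T(f(it))\|_{q_0,\psi}\le N_0\|f(it)\|_{p_0,\phi}\le N_0(1+\epsilon)$ and similarly $\|g(1+it)\|_{q_1,\psi}\le N_1(1+\epsilon)$; hence after normalizing by $N_0^{1-\theta}N_1^{\theta}$ one gets, from the definition of $\|\cdot\|_\theta$ and Theorem \ref{thm:interpolation2} identifying $\|\cdot\|_\theta$ with $\|\cdot\|_{q_\theta,\psi}$ on $\mathcal{N}$, the bound $\|T(x)\|_{q_\theta,\psi}\le N_0^{1-\theta}N_1^{\theta}(1+\epsilon)$. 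Letting $\epsilon\to 0^+$ and then taking the supremum over $x\in\mathcal M_a$ with $\|x\|_{p_\theta,\phi}\le 1$ (which suffices by density of $\mathcal M_a$ in $L_{p_\theta}(\mathcal M)$, Lemma \ref{lem:dense}) yields $\|T\|_{(p_\theta,\phi)\to(q_\theta,\psi)}\le N_0^{1-\theta}N_1^{\theta}$.

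The one point requiring a little care — and the place I expect to be the main obstacle — is that in the quasi-Banach regime $\min(p_0,q_0)<1$ one cannot simply quote the classical "interpolation of operators" theorem, since the triangle inequality fails; instead I have to argue directly with the space $\mathcal F$ as defined in Section 4, as sketched above. The verification that applying $T$ to $f(z)=\sum_k f_k(z)x_k$ produces a legitimate element of $\mathcal F(L_{q_0}(\mathcal N),L_{q_1}(\mathcal N))$ is immediate (finite sums, same scalar functions $f_k$), and the boundary estimates only use the definition of the operator norms $\|T\|_{(p_k,\phi)\to(q_k,\psi)}$ together with the fact that $f(it)$, $f(1+it)$ are (images of) elements of $\mathcal M$, so no issue with the quasi-norm arises on a single line. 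One small wrinkle: the definition of $\|T\|_{(p,\phi)\to(q,\psi)}$ in the statement is a supremum over $X\in\mathcal M$, and $f(it)$ is of the form $h_\phi^{1/2p_0}(\cdot)h_\phi^{1/2p_0}$ with argument in $\mathcal M$ (a finite combination of the $x_k\in\mathcal M_a\subset\mathcal M$), so the operator-norm bound applies verbatim. Finally one should note the corollary is stated for a n.s.f.\ state, which in the $\sigma$-finite setting is just a n.f.\ state, so Theorem \ref{thm:interpolation2} applies as written; handling the endpoint cases $p_1=\infty$ or $q_1=\infty$ needs only the usual convention $\|\cdot\|_{\infty,\phi}=\|\cdot\|_{\mathcal M}$ and causes no difficulty.
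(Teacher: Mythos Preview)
Your proposal is correct and is precisely the ``standard approach in the interpolation theory'' that the paper's one-line proof invokes; the paper gives no further details beyond citing Theorem~\ref{thm:interpolation2} and \cite{BL1976}. The only cosmetic wrinkle is that $T(x_k)$ need not lie in $\mathcal{N}_a$, so $g$ is not literally in $\mathcal{F}(L_{q_0}(\mathcal N),L_{q_1}(\mathcal N))$ as defined in Section~4; but since $g\in AF(\mathcal{N})$, Proposition~\ref{prop:three-line-Haagerup} (applied to $N_0^{z-1}N_1^{-z}g(z)$, which is the ``normalization'' you allude to) yields $\|T(x)\|_{q_\theta,\psi}\le N_0^{1-\theta}N_1^{\theta}(1+\epsilon)$ directly, so no genuine issue arises.
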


\begin{proof}
	It is a direct corollary of Theorem \ref{thm:interpolation2} using the standard approach in the interpolation theory \cite{BL1976}. 
\end{proof}


\section{Applications to the Sandwiched R\'enyi relative entropy}
The history of the generalization of $p$-R\'{e}nyi divergence can be traced back to Petz's work \cite{Petz1986, Petz1993}, where he extended the notion of the $p$-R\'{e}nyi divergence to the general von Neumann algebra context.
Regarding to the sandwiched $p$-R\'{e}nyi divergence, there are mainly two approaches. 

One is introduced by Jencov\'{a} \cite{Jencova2017, Jencova2018} as follows
\begin{equation}\label{eq:def of Jencova divergence}
\tilde{D}_p^J (\psi \| \phi): = \frac{1}{p-1} \log \left\| h_\psi \right\|^p_{C_p(\mathcal{M}, \phi)}, \;  p \in (1, \infty),
\end{equation}
where $C_p(\mathcal{M}, \phi)$ is the Kosaki's noncommutative $L_p$-space associated with $(\mathcal{M}, \phi).$ Recall that
\begin{equation*}
C_p(\mathcal{M}, \phi) := \left[h_\phi^{\frac{1}{2}} \mathcal{M} h_\phi^{\frac{1}{2}}, L_1(\mathcal{M}, \phi) \right]_{1/p},
\end{equation*}
where $1\leq p \leq \infty$. Note that $C_\infty (\mathcal{M}, \phi) = h_\phi^{\frac{1}{2}} \mathcal{M} h_\phi^{\frac{1}{2}},$ and for any $x \in \mathcal{M},$ $\left\|h_\phi^{\frac{1}{2}} x h_\phi^{\frac{1}{2}}\right\|_{C_\infty (\mathcal{M}, \phi)} = \|x\|.$ Since the definition \eqref{eq:def of Jencova divergence} relies on Kosaki's construction of noncommutative $L_p$-space, the index $p$ can not go beyond the interval $(1, \infty).$

Another definition is derived by Berta, Scholz and Tomamichel \cite{BST2016} via a so-called \emph{noncommutative vector valued $L_p$-space}, denoted by $L_p^{BST}(\mathcal{M}, \phi)$. The definition of spaces $L_p^{BST}(\mathcal{M}, \phi)$ relies on the \emph{spatial derivative} $\triangle (\xi/ \phi),$ where $\xi$ is a vector in $L_2 (\mathcal{M}, \phi).$ The definition of the space $L_p^{BST}(\mathcal{M}, \phi)$ is subtle. We omit the details and refer to \cite{BST2016}. They defined the divergence as follows
\begin{equation}
\tilde{D}_p^{BST} (\psi \| \phi): = \frac{2p}{p-1} \log \left\| \xi_\psi \right\|_{L_{2p}^{BST}(\mathcal{M}, \phi)}, \;  p \in [1/2, 1) \cup (1, \infty),
\end{equation}
where $\xi_\psi$ is a vector representation of $\psi$ for a $*$-representation $\pi: \mathcal{M} \to B(H).$


Now let $\mathcal{M}$ be a $\sigma$-finite von Neumann algebra acting on $H$, and $\phi, \psi$ be two normal faithful states on $\mathcal{M}.$ For $p \in (0, 1) \cup (1, \infty)$ we define
\begin{equation}\label{eq:relative-entropy}
\begin{split}
\tilde{D}_p (\psi \| \phi) : 
&  = \frac{1}{p-1} \log  \left\| h_\phi^{\frac{1-p}{2p}} h_\psi h_\phi^{\frac{1-p}{2p}} \right\|^p_{L_p(\mathcal{M}, \phi)}\\
& =  \frac{1}{p-1} \log  \left\| h_\phi^{-\frac{1}{2}} h_\psi h_\phi^{-\frac{1}{2}} \right\|^p_{p, \phi},
\end{split}
\end{equation}
where $h_\psi$ and $h_\phi$ are Randon-Nikodym derivatives of dual weights $\hat{\psi}$ and $\hat{\phi}$ with respect to the canonical trace $\tau$ on $\mathcal{M}\rtimes_{\sigma^\phi}\mathbb{R}$, respectively.


Using Lemma \ref{lem:isometry}, we can show that the definition of \eqref{eq:relative-entropy} extends the notion of the sandwiched $p$-R\'enyi divergence to the $\sigma$-finite von Neumann algebra case. We recall that for two positive operators $\sigma, \rho \in B(H)$, the sandwiched $p$-R\'enyi divergence is defined as \cite{MDSFT2013}
\begin{equation}
\tilde{D}_p (\rho \| \sigma) = \frac{1}{p-1} \log  \left\| \sigma^{\frac{1-p}{2p}} \rho \sigma^{\frac{1-p}{2p}} \right\|^p_p, \; p \in (0, 1) \cup (1, \infty),
\end{equation}
where $\|\cdot\|_p$ is the Schatten $p$-norm on $B(H).$

\begin{proposition}\label{prop:covering Beigi's result}
	Let $\mathcal{M}$ be a finite von Neumann algebra with a n.f. tracial state $\tau$, and $\psi, \phi$ be two n.f. states on $\mathcal{M}$ defined by
	$$\psi(x)=\tau(\rho x),~~\phi(x)=\tau(\sigma x),~~x\in\mathcal{M},$$
	where $\rho$ and $\sigma$ are two positive operators with unit trace. Then
	\begin{equation}
	\tilde{D}_p (\psi \| \phi) = \frac{1}{p-1} \log  \tau \left[ \left( \sigma^{\frac{1-p}{2p}} \rho \sigma^{\frac{1-p}{2p}} \right)^p \right].
	\end{equation}
\end{proposition}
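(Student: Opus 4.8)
The plan is to reduce the claimed identity to Proposition~\ref{prop:fit} by recognizing that the quantity $\left\|h_\phi^{\frac{1-p}{2p}} h_\psi h_\phi^{\frac{1-p}{2p}}\right\|_{L_p(\mathcal{M},\phi)}$ is exactly a $\|\cdot\|_{p,\phi}$-norm of a suitable element of $\mathcal{M}$, namely the element $\sigma^{-1/2}\rho\,\sigma^{-1/2}$ transported along the isometry $\theta'$ from Lemma~\ref{lem:isometry}. First I would invoke Lemma~\ref{lem:isometry}: since $\phi=\tau(\sigma\cdot)$ and $\psi=\tau(\rho\cdot)$ with $\tau(\sigma)=\tau(\rho)=1$, the lemma gives an isometry $\theta':\Lambda_p(\mathcal{M},\tau)\to L_p(\mathcal{M},\phi)$ with $h_\phi=\theta'(\sigma)$ and $h_\psi=\theta'(\rho)$ (applying the ``$h_\psi=\theta'(\rho)$'' half of \eqref{eq:eq 1 in lem on isometry} together with \eqref{eq:eq 2 in lem on isometry}).

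The second step is to rewrite the argument of the norm. Using the second displayed form of \eqref{eq:relative-entropy}, we have
\begin{equation*}
\left\|h_\phi^{\frac{1-p}{2p}} h_\psi h_\phi^{\frac{1-p}{2p}}\right\|^p_{L_p(\mathcal{M},\phi)}
=\left\|h_\phi^{-\frac12}h_\psi h_\phi^{-\frac12}\right\|^p_{p,\phi}
=\left\|h_\phi^{\frac{1}{2p}}\bigl(h_\phi^{-\frac12}h_\psi h_\phi^{-\frac12}\bigr)h_\phi^{\frac{1}{2p}}\right\|^p_{L_p(\mathcal{M},\phi)},
\end{equation*}
so the element we are norming in the sense of $\|\cdot\|_{p,\phi}$ is $x:=h_\phi^{-\frac12}h_\psi h_\phi^{-\frac12}$. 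Transporting through $\theta'$, and using $h_\phi=\theta'(\sigma)$, $h_\psi=\theta'(\rho)$ together with the multiplicativity/compatibility of $\theta'$ with the functional calculus and left/right multiplication (this is the content of how $\theta'$ arises from the tracial picture $h_\tau=\mathrm{id}\otimes\exp(\cdot)$ in the identification of $L_p(\mathcal{M},\tau)$ with $\Lambda_p(\mathcal{M},\tau)$), one identifies $x$ with $\sigma^{-1/2}\rho\,\sigma^{-1/2}\in\mathcal{M}$. Then Proposition~\ref{prop:fit} applied to this element gives
\begin{equation*}
\left\|\sigma^{-\frac12}\rho\,\sigma^{-\frac12}\right\|^p_{p,\phi}
=\tau\!\left(\left|\sigma^{\frac{1}{2p}}\bigl(\sigma^{-\frac12}\rho\,\sigma^{-\frac12}\bigr)\sigma^{\frac{1}{2p}}\right|^p\right)
=\tau\!\left[\left(\sigma^{\frac{1-p}{2p}}\rho\,\sigma^{\frac{1-p}{2p}}\right)^p\right],
\end{equation*}
where the last equality uses $\frac{1}{2p}-\frac12=\frac{1-p}{2p}$ and the fact that $\sigma^{\frac{1-p}{2p}}\rho\,\sigma^{\frac{1-p}{2p}}$ is positive, so its modulus is itself. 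Dividing by $p-1$ and taking logarithms yields the asserted formula.

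The main obstacle is the bookkeeping in the second step: one must check carefully that the isometry $\theta'$ intertwines the relevant operations, i.e.\ that $h_\phi^{\frac{1-p}{2p}}h_\psi h_\phi^{\frac{1-p}{2p}}$, a product of (unbounded) elements of the Haagerup space $L_p(\mathcal{M},\phi)$, really corresponds under $\theta'$ to $\sigma^{\frac{1-p}{2p}}\rho\,\sigma^{\frac{1-p}{2p}}\in\Lambda_p(\mathcal{M},\tau)$. Since $h_\phi=\theta'(\sigma)$ holds only as an identification of $L_1$-elements, one should argue at the level of bounded elements first --- e.g.\ replace $\rho$ by $\rho$ with a spectral cutoff bounding $\sigma^{-1}$ away from $0$, or equivalently work with $y=\sigma^{1/2p}x\sigma^{1/2p}$ directly --- and then pass to the limit, using that $\theta'$ is an isometry and that all quantities are finite because $\mathcal{M}$ is finite and $\sigma$ is faithful. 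Once this identification is justified, the rest is the elementary algebra of exponents indicated above. I would also remark that the finiteness of $\mathcal{M}$ and faithfulness of $\phi,\psi$ (so $\sigma,\rho$ are invertible as affiliated operators) are exactly what make the expressions $\sigma^{-1/2}$ and the spatial manipulations legitimate.
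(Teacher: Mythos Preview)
Your proposal is correct and rests on the same tool as the paper---Lemma~\ref{lem:isometry} and the fact that the isometry $\theta'$ intertwines products and functional calculus---but the paper's proof is one step more direct. Rather than passing through the element $x=\sigma^{-1/2}\rho\,\sigma^{-1/2}$ and then invoking Proposition~\ref{prop:fit}, the paper applies $\theta'$ straight to $\sigma^{\frac{1-p}{2p}}\rho\,\sigma^{\frac{1-p}{2p}}$, obtaining $\theta'\bigl(\sigma^{\frac{1-p}{2p}}\rho\,\sigma^{\frac{1-p}{2p}}\bigr)=h_\phi^{\frac{1-p}{2p}}h_\psi h_\phi^{\frac{1-p}{2p}}$ and then using that $\theta'$ is an isometry onto $L_p(\mathcal{M},\phi)$. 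This bypasses the intermediate question of whether $\sigma^{-1/2}\rho\,\sigma^{-1/2}$ lies in $\mathcal{M}$ (it need not be bounded when $\sigma$ is not bounded below), which is precisely the obstacle you flag and propose to handle by approximation. Your route works after that approximation, but the direct application of $\theta'$ to the $L_p$-element $\sigma^{\frac{1-p}{2p}}\rho\,\sigma^{\frac{1-p}{2p}}$ avoids the detour entirely; the multiplicativity check for $\theta'$ is the same in either case.
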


\begin{proof}
	By Lemma \ref{lem:isometry}, there exists an isometry $\theta':\Lambda_p(\mathcal{M},\tau)\to L_p(\mathcal{M},\phi)$ such that 
	$$\theta'(\sigma)=h_{\phi},~~\theta'(\rho)=h_{\psi}.$$ 
	Then we have
	\begin{equation*}
	\begin{split}
	\left\| h_\phi^{\frac{1-p}{2p}} h_\psi h_\phi^{\frac{1-p}{2p}} \right\|^p_{L_p(\mathcal{M}, \phi)} 
	& = \left\|\theta' \left( \sigma^{\frac{1-p}{2p}}  \rho \sigma^{\frac{1-p}{2p}} \right) \right\|^p_{L_p(\mathcal{M}, \phi)} \\
	& = \left\| \sigma^{\frac{1-p}{2p}}  \rho \sigma^{\frac{1-p}{2p}} \right\|^p_{\Lambda_p(\mathcal{M}, \tau)}\\
	& = \tau \left[ \left( \sigma^{\frac{1-p}{2p}} \rho \sigma^{\frac{1-p}{2p}} \right)^p \right],
	\end{split}
	\end{equation*}
	which completes our proof.
\end{proof}

\begin{proposition}
	Our definition $\tilde{D}_p (\psi \| \phi)$ fits well with the ones defined by Jen\v{c}ov\'a and Berta et al.. More precisely, we have
	\begin{enumerate}[(i)] 
		\item $\tilde{D}_p (\psi \| \phi) = \tilde{D}_p^J (\psi \| \phi)$ for $1< p < \infty;$
		
		\item $\tilde{D}_p (\psi \| \phi) = \tilde{D}_p^{BST} (\psi \| \phi)$ for $1/2 \leq p <1.$
	\end{enumerate}
\end{proposition}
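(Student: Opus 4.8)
The plan is to prove the two assertions separately; in each case the key is to recognize the noncommutative $L_p$-space underlying the rival definition as a weighted Haagerup $L_p$-space, so that all three divergences become the same functional of $h_\phi$ and $h_\psi$.

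\emph{Proof of (i).} I would quote Kosaki's identification directly. For $1<p<\infty$ with $\tfrac1p+\tfrac1{p'}=1$, the map $b\mapsto h_\phi^{1/(2p')}\,b\,h_\phi^{1/(2p')}$ is an isometry of the Haagerup space $L_p(\mathcal{M},\phi)$ onto $C_p(\mathcal{M},\phi)$, realized as the subspace $h_\phi^{1/(2p')}L_p(\mathcal{M},\phi)h_\phi^{1/(2p')}$ of $L_1(\mathcal{M},\phi)$; this is precisely the reiteration/identification step in \cite{K1984} recalled after \eqref{eq:def of Jencova divergence}. Applying it to $h_\psi\in L_1(\mathcal{M},\phi)=\mathcal{M}_*$ and setting $b=h_\phi^{-1/(2p')}h_\psi h_\phi^{-1/(2p')}$ yields
$$\|h_\psi\|_{C_p(\mathcal{M},\phi)}=\big\|h_\phi^{-1/(2p')}\,h_\psi\,h_\phi^{-1/(2p')}\big\|_{L_p(\mathcal{M},\phi)},$$
valid in $[0,+\infty]$, with both sides $=+\infty$ exactly when $h_\psi\notin C_p(\mathcal{M},\phi)$, i.e.\ when the right-hand product does not define an element of $L_p(\mathcal{M},\phi)$. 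Since $-\tfrac1{2p'}=\tfrac1{2p}-\tfrac12=\tfrac{1-p}{2p}$, the right-hand side equals $\big\|h_\phi^{(1-p)/(2p)}h_\psi h_\phi^{(1-p)/(2p)}\big\|_{L_p(\mathcal{M},\phi)}=\big\|h_\phi^{-1/2}h_\psi h_\phi^{-1/2}\big\|_{p,\phi}$, the quantity in \eqref{eq:relative-entropy}. Applying $\tfrac1{p-1}\log(\cdot)^p$ to both sides gives $\tilde D_p(\psi\|\phi)=\tilde D_p^J(\psi\|\phi)$ for $1<p<\infty$.

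\emph{Proof of (ii).} For $1/2\le p<1$ I would proceed by one of two routes. Route (a): one checks that $\|\xi_\psi\|_{L_{2p}^{BST}(\mathcal{M},\phi)}=\big\|h_\phi^{(1-p)/(2p)}h_\psi^{1/2}\big\|_{L_{2p}(\mathcal{M},\phi)}$ (the product lies in $L_{2p}(\mathcal{M},\phi)$ since $\tfrac{1-p}{2p}+\tfrac12=\tfrac1{2p}$); then \eqref{eq:p norm and 2p norm} gives $\|\xi_\psi\|_{L_{2p}^{BST}}^2=\big\|h_\phi^{(1-p)/(2p)}h_\psi h_\phi^{(1-p)/(2p)}\big\|_{L_p(\mathcal{M},\phi)}$, whence $\tfrac{2p}{p-1}\log\|\xi_\psi\|_{L_{2p}^{BST}}=\tfrac1{p-1}\log\big\|h_\phi^{(1-p)/(2p)}h_\psi h_\phi^{(1-p)/(2p)}\big\|_{L_p(\mathcal{M},\phi)}^p=\tilde D_p(\psi\|\phi)$. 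Route (b): Proposition \ref{prop:covering Beigi's result} shows that when $\mathcal{M}$ is finite with tracial state $\tau$, $\psi=\tau(\rho\,\cdot)$, $\phi=\tau(\sigma\,\cdot)$, one has $\tilde D_p(\psi\|\phi)=\tfrac1{p-1}\log\tau\big[(\sigma^{(1-p)/(2p)}\rho\,\sigma^{(1-p)/(2p)})^p\big]$, while \cite{BST2016} proved that $\tilde D_p^{BST}$ reduces to the same trace expression in this setting (their vector-valued $L_p$-norm specializes to the formula of \cite{MDSFT2013}); one then passes to a general $\sigma$-finite $\mathcal{M}$ through $\mathcal{R}=\mathcal{M}\rtimes_{\sigma^\phi}G$ and the increasing net $\{\mathcal{R}_n\}$ of Theorem \ref{thm:reduction}, using the isometric embedding $L_p(\mathcal{M},\phi)\hookrightarrow L_p(\mathcal{R},\hat\phi)$, the data processing inequality for both divergences, and the density statement of Theorem \ref{thm:reduction} to conclude that both quantities equal $\lim_n$ of their (common) values on the finite $\mathcal{R}_n$.

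\emph{Main obstacle.} Part (i) is bookkeeping once Kosaki's theorem is invoked. The real work lies in (ii): in route (a) one must push the spatial-derivative definition of $L_{2p}^{BST}(\mathcal{M},\phi)$ far enough to make the Haagerup density operators $h_\phi,h_\psi$ visible, so that the correspondence $\|\xi_\psi\|_{L_{2p}^{BST}}=\|h_\phi^{(1-p)/(2p)}h_\psi^{1/2}\|_{L_{2p}(\mathcal{M},\phi)}$ is legitimate; in route (b) one must justify the limiting argument, i.e.\ the continuity of both $\tilde D_p$ and $\tilde D_p^{BST}$ along the Haagerup reduction net — for $\tilde D_p$ this follows from Theorem \ref{thm:reduction} and the interpolation results established above, but for $\tilde D_p^{BST}$ it needs the corresponding approximation property inside the Berta--Scholz--Tomamichel formalism. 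I expect one of these two identifications (the isometry of $L_{2p}^{BST}$ with a Haagerup $L_p$-space, or its behaviour under reduction) to be the only genuinely nontrivial point.
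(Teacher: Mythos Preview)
Your treatment of (i) is exactly the paper's: both invoke Kosaki's isometry $i_p:L_p(\mathcal{M},\phi)\to C_p(\mathcal{M},\phi)$, $b\mapsto h_\phi^{1/(2p')}b\,h_\phi^{1/(2p')}$, apply it with $b=h_\phi^{(1-p)/(2p)}h_\psi h_\phi^{(1-p)/(2p)}$, and read off $\|h_\psi\|_{C_p}=\|h_\phi^{(1-p)/(2p)}h_\psi h_\phi^{(1-p)/(2p)}\|_{L_p}$.

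For (ii) the paper does not argue at all: it simply observes that the identity $\tilde D_p=\tilde D_p^{BST}$ for $1/2\le p<1$ is already established by Jen\v{c}ov\'a \cite[Theorem~5]{Jencova2017} and cites that. Your Route~(a) is precisely the content of that theorem --- the identification $\|\xi_\psi\|_{L_{2p}^{BST}}=\|h_\phi^{(1-p)/(2p)}h_\psi^{1/2}\|_{L_{2p}(\mathcal{M},\phi)}$ is exactly what Jen\v{c}ov\'a proves via the spatial derivative/Haagerup $L_p$ dictionary --- so you have correctly located the substance, but there is no need to redo it here. Your Route~(b) is a genuine detour: the Haagerup reduction argument would require knowing that $\tilde D_p^{BST}$ is continuous along the net $\{\mathcal{R}_n\}$ and compatible with the conditional expectations onto $L_p(\mathcal{M},\phi)\subset L_p(\mathcal{R},\hat\phi)$, neither of which is available without essentially redoing the spatial-derivative analysis you were trying to avoid; moreover DPI gives only one inequality, not the needed two-sided control. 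Drop Route~(b) and either cite \cite{Jencova2017} as the paper does, or commit to Route~(a) and carry out the spatial-derivative computation in full.
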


\begin{proof}
	We note that (ii) has been proved by Jen\v{c}ov\'{a} in \cite[Theorem 5]{Jencova2017}.
	
	For (i), we will use the following fact proved by Kosaki \cite{K1984}: For $1\leq p \leq \infty$ and $1/p + 1/p' =1$, the map $i_p:x\mapsto h_\phi^{1/2p'} x h_\phi^{1/2p'}$ is an isometric isomorphism from $L_p(\mathcal{M}, \phi)$ onto $C_p (\mathcal{M}, \phi)$. From this it follows that
	\begin{equation}\label{eq:Jencova1}
	\left\| h_\phi^{\frac{1-p}{2p}} h_\psi h_\phi^{\frac{1-p}{2p}} \right\|_{L_p(\mathcal{M}, \phi)} 
	= \left\| i_p \left( h_\phi^{\frac{1-p}{2p}} h_\psi h_\phi^{\frac{1-p}{2p}}  \right) \right\|_{C_p(\mathcal{M},  \phi)} 
	= \left\| h_\psi \right\|_{C_p(\mathcal{M}, \phi)}.
	\end{equation}
	Hence for $1< p < \infty,$ the equation \eqref{eq:relative-entropy} coincides with Jen\v{c}ov\'a's definition, i.e.,
	\begin{equation}\label{eq:Jencova2}
	\tilde{D}_p (\psi \| \phi) = \frac{1}{p-1} \log \|h_\psi\|_{C_p(\mathcal{M}, \phi)}^p = \tilde{D}_p^J (\psi \| \phi).
	\end{equation}
	
\end{proof}

\begin{proposition}(Monotonicity in $p$). For all n.f. states $\psi, \phi$ on $\mathcal{M},$ the function $p \mapsto \tilde{D}_p ( \psi || \phi)$ is increasing for $p \in (0, 1) \cup (1, \infty).$
\end{proposition}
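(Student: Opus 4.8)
The plan is to reduce the monotonicity in $p$ to the Hadamard three lines theorem for Haagerup noncommutative $L_p$-spaces, Proposition \ref{prop:three-line-Haagerup}, applied to the \emph{constant} function $G(z) \equiv h_\phi^{-1/2} h_\psi h_\phi^{-1/2}$ (interpreted appropriately via analytic elements, as in Lemma \ref{lem:dense} and the proof of Corollary \ref{cor:fixed}). Concretely, fix $0 < p < q$ with both in $(0,1)\cup(1,\infty)$, and recall from \eqref{eq:relative-entropy} that
\begin{equation*}
\tilde D_p(\psi\|\phi) = \frac{1}{p-1}\log \left\| h_\phi^{-\frac12} h_\psi h_\phi^{-\frac12}\right\|_{p,\phi}^{p}.
\end{equation*}
So it suffices to understand how $p \mapsto \left\| h_\phi^{-\frac12} h_\psi h_\phi^{-\frac12}\right\|_{p,\phi}$ behaves and then track the effect of the prefactor $\frac{1}{p-1}$ and the power $p$ inside the logarithm.

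First I would establish the key convexity estimate: for $0 < p_0 < p_1 \le \infty$ and $1/p_\theta = (1-\theta)/p_0 + \theta/p_1$, Corollary \ref{cor:fixed} (applied to $X := h_\phi^{-1/2} h_\psi h_\phi^{-1/2}$, after an approximation argument since $X$ need not lie in $\mathcal{M}$ — one writes $h_\psi = y h_\phi y^*$ with $y$ suitable analytic, as in Lemma \ref{lem:isometry}, or truncates $X$) gives
\begin{equation*}
\left\| X\right\|_{p_\theta,\phi} \le \left\| X\right\|_{p_0,\phi}^{1-\theta}\,\left\| X\right\|_{p_1,\phi}^{\theta}.
\end{equation*}
Equivalently, writing $Q_p := \log \left\| X\right\|_{p,\phi}$, the function $1/p \mapsto Q_p$ is convex on $(0,\infty]$. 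Next I would observe the normalization fact $\tr(h_\psi) = \psi(1) = 1$, which identifies the endpoint value at $p=1$: by Kosaki's embedding (as in \eqref{eq:Jencova1}) $\left\| X\right\|_{1,\phi} = \left\| h_\psi\right\|_1 = 1$, hence $Q_1 = 0$. Then the proof becomes a purely one-variable calculus argument. Set $f(p) := \tilde D_p(\psi\|\phi) = \frac{p}{p-1}\,Q_p$. Introducing the variable $s = 1/p \in (0,\infty)$ and the convex function $g(s) := Q_{1/s}$, one has $g(1) = 0$ and
\begin{equation*}
f(1/s) = \frac{g(s)}{1-s}.
\end{equation*}
The claim $f$ is increasing in $p$ is the claim that $s \mapsto g(s)/(1-s)$ is \emph{decreasing} in $s$ on $(0,1)\cup(1,\infty)$. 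This is a standard consequence of convexity of $g$ together with $g(1)=0$: for a convex function vanishing at $1$, the slope $\frac{g(s)-g(1)}{s-1} = \frac{g(s)}{s-1}$ is nondecreasing in $s$, hence $\frac{g(s)}{1-s}$ is nonincreasing in $s$, which is exactly what is needed. I would spell this out by a direct difference-quotient comparison rather than invoking differentiability of $g$, so as to avoid regularity issues in $p$.

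The main obstacle, and the step deserving the most care, is the passage from Corollary \ref{cor:fixed} — which is stated for $x \in \mathcal{M}$, i.e. for elements of the form $h_\phi^{1/2p}xh_\phi^{1/2p}$ with $x$ bounded — to the operator $h_\phi^{-1/2}h_\psi h_\phi^{-1/2}$, which is genuinely unbounded and whose "symmetric" form only makes literal sense through analytic elements. The clean way around this is to work in the finite setting of Lemma \ref{lem:isometry} (or to use the Haagerup reduction theorem, Theorem \ref{thm:reduction}, which already underlies Proposition \ref{prop:three-line-Haagerup}): in a finite von Neumann algebra $\mathcal{M}$ with tracial state $\tau$ and $\phi = \tau(\sigma\cdot)$, $\psi = \tau(\rho\cdot)$, Proposition \ref{prop:covering Beigi's result} gives the explicit formula $\tilde D_p(\psi\|\phi) = \frac{1}{p-1}\log\tau\big[(\sigma^{\frac{1-p}{2p}}\rho\,\sigma^{\frac{1-p}{2p}})^p\big]$, and then monotonicity is exactly the known monotonicity of the matricial sandwiched R\'enyi divergence, whose proof is precisely the log-convexity-plus-normalization argument above applied to $G(z) = \sigma^{\frac{1-z}{2}}\rho\,\sigma^{\frac{1-z}{2}}$ (now bona fide bounded-operator-valued and analytic). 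For the general $\sigma$-finite case one transports this via the reduction theorem: approximate, use that $\tilde D_p$ for $(\mathcal{M},\phi)$ equals $\tilde D_p$ for the finite algebras $\mathcal{R}_n$ on the relevant dense subspaces (the Haagerup $L_p$-norms agree under $\theta_n$), and pass to the limit using lower semicontinuity; the inequality $\tilde D_p \le \tilde D_q$ survives the limit. Once this reduction is in place the remaining computation is the elementary convexity lemma and requires no further input.
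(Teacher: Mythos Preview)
Your proposal is correct and follows essentially the same route as the paper: both rest on applying Corollary~\ref{cor:fixed} to $x=h_\phi^{-1/2}h_\psi h_\phi^{-1/2}$ together with the normalization $\|h_\psi\|_{L_1}=1$, the only difference being that the paper carries out the resulting inequality by explicit case analysis (choosing a specific $\theta$ with one endpoint at $p=1$ and checking the exponent identity $(1-\theta)p_1'=p_0'$), whereas you package the same computation as the standard ``slopes of a convex function through a zero are monotone'' lemma. Your discussion of the unboundedness of $x$ is in fact more careful than the paper's, which simply invokes Corollary~\ref{cor:fixed} with $x=h_\phi^{-1/2}h_\psi h_\phi^{-1/2}$ without further comment.
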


\begin{proof}
	For $p \in (1, \infty),$ by a direct modification of the proof in \cite[Theorem 7.1]{Beigi2013} we can deduce our result. It was also proved by Jen\v{c}ov\'a \cite{Jencova2018}.
	
	Since the logarithm function is non-decreasing, it remains to show that when $0<p_0<p_1<1$ or $0<p_0<1<p_1$, we have
	\begin{equation}\label{eq:monotone in p}
	\left\| h_\phi^{\frac{1-p_0}{2p_0}} h_\psi h_\phi^{\frac{1-p_0}{2p_0}}\right\|_{L_{p_0}(\mathcal{M}, \phi)}^{p_0'} 
	\leq \left\| h_\phi^{\frac{1-p_1}{2p_1}} h_\psi h_\phi^{\frac{1-p_1}{2p_1}} \right\|_{L_{p_1}(\mathcal{M}, \phi)}^{p_1'}.
	\end{equation}
	Here we use $s'$ to denote the conjugate number of $s$. Note that for $s \in (0,1), s'= \frac{s}{s-1}<0.$
	
	If $0< p_0< p_1 <1,$ then there exists $0< \theta < 1$ such that
	\begin{equation}\label{eq:conjugate-1}
	\frac{1}{p_1} = \frac{1-\theta}{p_0} + \frac{\theta}{1}.
	\end{equation}
	Using Corollary \ref{cor:fixed} (by letting $x = h_\phi^{-\frac{1}{2}} h_\psi h_\phi^{-\frac{1}{2}}$) we have
	$$\left\|  h_\phi^{\frac{1-p_1}{2p_1}} h_\psi h_\phi^{\frac{1-p_1}{2p_1}}\right\|_{L_{p_1}(\mathcal{M}, \phi)} 
	\leq \left\| h_\phi^{\frac{1-p_0}{2p_0}} h_\psi h_\phi^{\frac{1-p_0}{2p_0}} \right\|^{1-\theta}_{L_{p_0}(\mathcal{M}, \phi)}   \left\| h_\psi \right\|_{L_1(\mathcal{M}, \phi)}^{\theta}.$$
	Since $\|h_\psi\|_{L_1(\mathcal{M}, \phi)}=1$ and $p_1'<0$, we have
	$$\left\| h_\phi^{\frac{1-p_1}{2p_1}} h_\psi h_\phi^{\frac{1-p_1}{2p_1}} \right\|_{L_{p_1}(\mathcal{M}, \phi)}^{p_1'} 
	\geq \left\| h_\phi^{\frac{1-p_0}{2p_0}} h_\psi h_\phi^{\frac{1-p_0}{2p_0}} \right\|_{L_{p_0}(\mathcal{M}, \phi)}^{(1-\theta) p_1'}.$$
	By equation \eqref{eq:conjugate-1}, we have $(1-\theta) p_1' = p_0'$, which gives \eqref{eq:monotone in p}.
	
	The proof for $0<p_0<1<p_1$ is similar. In this case there exists $0< \theta < 1$ such that
	\begin{equation}\label{eq:conjugate-2}
	\frac{1}{1} = \frac{1-\theta}{p_0} + \frac{\theta}{p_1}.
	\end{equation}
	Using Corollary \ref{cor:fixed} we have
	$$1=\left\|  h_\psi  \right\|_{L_{1}(\mathcal{M}, \phi)} 
	\leq \left\|  h_\phi^{\frac{1-p_0}{2p_0}} h_\psi h_\phi^{\frac{1-p_0}{2p_0}} \right\|^{1-\theta}_{L_{p_0}(\mathcal{M}, \phi)}   \left\|h_\phi^{\frac{1-p_1}{2p_1}} h_\psi h_\phi^{\frac{1-p_1}{2p_1}} \right\|_{L_{p_1}(\mathcal{M}, \phi)}^{\theta}.$$
	Recall that $p_0'<0$, then 
	$$\left\|  h_\phi^{\frac{1-p_0}{2p_0}} h_\psi h_\phi^{\frac{1-p_0}{2p_0}} \right\|^{p_0'}_{L_{p_0}(\mathcal{M}, \phi)}   \leq \left\|h_\phi^{\frac{1-p_1}{2p_1}} h_\psi h_\phi^{\frac{1-p_1}{2p_1}} \right\|_{L_{p_1}(\mathcal{M}, \phi)}^{\frac{\theta}{\theta-1}p_0'}.$$
	From \eqref{eq:conjugate-2} we have $(\theta-1)p_1'=\theta p_0'$ and then obtain \eqref{eq:monotone in p}.
\end{proof}


We fix some notations that will be used in the remaining part of the paper. Let $\mathcal{M}$ be a $\sigma$-finite von Neumann algebra with a normal faithful state $\phi$. Let $\mathcal{N}$ be a von Neumann subalgebra of $\mathcal{M}$ and set $\phi_{\mathcal{N}}: = \phi|_\mathcal{N}.$  Let $\tau$ (resp. $\tau_{\mathcal{N}}$) be the canonical trace on $\mathcal{R}:=\mathcal{M}\rtimes_{\sigma^\phi}\mathbb{R}$ (resp. $\mathcal{R}_{\mathcal{N}}:=\mathcal{N}\rtimes_{\sigma^{\phi_{\mathcal{N}}}}\mathbb{R}$). Let $h_\psi$ (resp. $h_{\psi_N}$) be the Randon-Nikodym derivative of dual weight $\hat{\phi}$ (resp. $\widehat{\phi_{\mathcal{N}}}$) with respect to $\tau$ (resp. $\tau_{\mathcal{N}}$).

\begin{theorem}
	Let $\mathcal{N}$ be a von Neumann subalgebra of $\mathcal{M}$ and $\psi, \phi$ be n.f. states on $\mathcal{M}.$ Then for $p \in (1, \infty),$ we have the following data processing inequality (DPI):
	\begin{equation*}
	\tilde{D}_p  (\psi_{\mathcal{N}} \| \phi_{\mathcal{N}} ) \leq \tilde{D}_p  (\psi \| \phi ).
	\end{equation*}
\end{theorem}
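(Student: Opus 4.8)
The plan is to prove the data processing inequality by interpolation, following the strategy of Beigi \cite{Beigi2013} and Jen\v{c}ov\'a \cite{Jencova2018}, but now phrased entirely within the Haagerup $L_p$-space framework so that the complex interpolation result of Theorem \ref{thm:interpolation2} (and in particular Corollary \ref{cor:fixed}) can be invoked directly. Let $E\colon\mathcal{M}\to\mathcal{N}$ denote the $\phi$-preserving normal conditional expectation onto $\mathcal{N}$ (it exists since $\sigma_t^\phi(\mathcal{N})=\mathcal{N}$ for the states of interest, or more precisely one works with the conditional expectation compatible with $\phi$; this is the standard setup in which the DPI is formulated). The key object is the adjoint-type map at the level of the $L_1$-spaces: there is a normal unital completely positive map, namely $E$ itself, whose predual sends $h_\psi\mapsto h_{\psi_\mathcal{N}}$, i.e. the restriction of states corresponds to compression of densities. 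Writing $p'$ for the conjugate index, the quantity $\tilde D_p(\psi\|\phi)=\frac{1}{p-1}\log\|h_\phi^{(1-p)/2p}h_\psi h_\phi^{(1-p)/2p}\|_{L_p(\mathcal{M},\phi)}^p$, and since $p>1$ the logarithm is increasing, the inequality to prove reduces to the norm bound
\begin{equation*}
\left\| h_{\phi_\mathcal{N}}^{\frac{1-p}{2p}} h_{\psi_\mathcal{N}} h_{\phi_\mathcal{N}}^{\frac{1-p}{2p}} \right\|_{L_p(\mathcal{N},\phi_\mathcal{N})}
\le
\left\| h_{\phi}^{\frac{1-p}{2p}} h_{\psi} h_{\phi}^{\frac{1-p}{2p}} \right\|_{L_p(\mathcal{M},\phi)}.
\end{equation*}

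The core of the argument is to establish the two endpoint estimates and then interpolate. At $p=1$ both sides equal $1$ (the densities have unit trace, and the conditional expectation preserves the state, so $\|h_{\psi_\mathcal{N}}\|_{L_1(\mathcal{N})}=\|h_\psi\|_{L_1(\mathcal{M})}=1$); this is the easy endpoint. At $p=\infty$ the left side is $\|h_{\phi_\mathcal{N}}^{-1/2}h_{\psi_\mathcal{N}}h_{\phi_\mathcal{N}}^{-1/2}\|$, which is the norm of the relative density (the spatial derivative / Connes cocycle derivative $d\psi_\mathcal{N}/d\phi_\mathcal{N}$ in the appropriate sense), and one needs the operator inequality that this relative density for the restricted states is dominated in norm by that for the original states — concretely $\| (d\psi_\mathcal{N}/d\phi_\mathcal{N}) \| \le \| (d\psi/d\phi)\|$, which follows because the conditional expectation does not increase the relevant Radon--Nikodym quotient (this is a Kadison--Schwarz / operator-Jensen type fact). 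Granting these two endpoint bounds, one forms the analytic family $G(z)$ built from complex powers $h_\phi^{z}$ and $h_{\phi_\mathcal{N}}^{z}$ sandwiching the densities — precisely the family that gives $\|G(\theta)\|_{p_\theta,\phi_\mathcal{N}}$ on the left with $1/p_\theta=(1-\theta)/1+\theta/\infty=1-\theta$ — and applies the Hadamard three lines theorem Proposition \ref{prop:three-line-Haagerup} (equivalently Corollary \ref{cor:fixed} with the ambient algebra $\mathcal{N}$), combined with the norm-nonincreasing property of the compression on each vertical line. Letting $p$ range over $(1,\infty)$ via $1/p=(1-\theta)\cdot 1$ recovers the claimed DPI.

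The step I expect to be the main obstacle is the rigorous identification, at the level of Haagerup $L_p$-spaces associated to \emph{different} algebras $\mathcal{N}\subset\mathcal{M}$, of the map that implements "restriction of the state / compression by the conditional expectation" and the verification that it is a contraction $L_p(\mathcal{M},\phi)\to L_p(\mathcal{N},\phi_\mathcal{N})$ on the relevant vertical lines — in particular the $p=\infty$ endpoint operator inequality for the relative densities, since the cross-product constructions for $\mathcal{M}$ and for $\mathcal{N}$ live on a priori different semifinite algebras and one must use the compatibility of the two constructions (the embedding $\mathcal{R}_\mathcal{N}\hookrightarrow\mathcal{R}$ intertwining dual weights, as in \cite{HJX2010,Jencova2018}). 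Once that contraction property is in place on the two boundary lines $\mathrm{Re}\,z=0$ and $\mathrm{Re}\,z=1$, the interior bound is immediate from Proposition \ref{prop:three-line-Haagerup}. A convenient way to sidestep some of the bookkeeping is to first reduce to the finite case by Haagerup's reduction theorem (Theorem \ref{thm:reduction}), where $h_\phi,h_\psi$ become genuine positive densities with respect to a tracial state, the conditional expectation is the usual trace-preserving one, the $p=\infty$ inequality becomes the elementary $\|\sigma_\mathcal{N}^{-1/2}\rho_\mathcal{N}\sigma_\mathcal{N}^{-1/2}\|\le\|\sigma^{-1/2}\rho\sigma^{-1/2}\|$ for the compressed operators, and Corollary \ref{cor:fixed} applies verbatim in $\mathcal{N}$; one then passes to the limit.
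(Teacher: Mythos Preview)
Your outline has the right shape (interpolate between the endpoints $p=1$ and $p=\infty$), but there are two genuine gaps.

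First, you assume a $\phi$-preserving conditional expectation $E:\mathcal{M}\to\mathcal{N}$ exists. That requires $\sigma_t^\phi(\mathcal{N})=\mathcal{N}$, which is \emph{not} part of the hypothesis here (it appears only later, in Theorem~\ref{thm:sufficient}). The paper avoids this entirely: it works with the restriction map on the predual, $T:L_1(\mathcal{M},\phi)\to L_1(\mathcal{N},\phi_\mathcal{N})$, $h_\omega\mapsto h_{\omega|_\mathcal{N}}$, which is always defined, positive and $\tr$-preserving.

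Second, and more seriously, Corollary~\ref{cor:fixed} and Proposition~\ref{prop:three-line-Haagerup} bound a single element (or analytic family) inside a \emph{single} algebra; they cannot by themselves compare $L_p$-norms taken in $\mathcal{N}$ with $L_p$-norms taken in $\mathcal{M}$. If you apply Corollary~\ref{cor:fixed} in $\mathcal{N}$ to $x=h_{\phi_\mathcal{N}}^{-1/2}h_{\psi_\mathcal{N}}h_{\phi_\mathcal{N}}^{-1/2}$ you get $\|x\|_{p,\phi_\mathcal{N}}\le \|x\|_{\infty}^{\theta}$, and with your $p=\infty$ endpoint this yields $\|x\|_{p,\phi_\mathcal{N}}\le \|h_\phi^{-1/2}h_\psi h_\phi^{-1/2}\|_\infty^{\theta}$; but the same corollary applied in $\mathcal{M}$ bounds the right-hand side of the DPI by the \emph{same} quantity, so no comparison follows. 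The phrase ``combined with the norm-nonincreasing property of the compression on each vertical line'' is precisely the statement to be proved, so invoking it is circular. What is needed is the Riesz--Thorin form, Corollary~\ref{cor:RT}, applied to one fixed linear map between the two weighted scales. The paper's map is
\[
S(x)=h_{\phi_\mathcal{N}}^{-\frac12}\,T\bigl(h_\phi^{\frac12}xh_\phi^{\frac12}\bigr)\,h_{\phi_\mathcal{N}}^{-\frac12},
\]
which sends $h_\phi^{-1/2}h_\psi h_\phi^{-1/2}$ to $h_{\phi_\mathcal{N}}^{-1/2}h_{\psi_\mathcal{N}}h_{\phi_\mathcal{N}}^{-1/2}$. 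Trace-preservation of $T$ gives $\|S\|_{(1,\phi)\to(1,\phi_\mathcal{N})}=1$, and since $S$ is positive with $S(1)=h_{\phi_\mathcal{N}}^{-1/2}T(h_\phi)h_{\phi_\mathcal{N}}^{-1/2}=1$ one has $\|S\|_{(\infty,\phi)\to(\infty,\phi_\mathcal{N})}=1$. This disposes of your ``main obstacle'' and your $p=\infty$ endpoint in one stroke---no Kadison--Schwarz argument, no Haagerup reduction, and no conditional-expectation hypothesis are needed.
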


\begin{proof}
	Basically it is an adaption of Beigi's proof \cite{Beigi2013}. It is sufficient to prove that
	\begin{equation}\label{eq:DPI}
	\left\| h_{\phi_{\mathcal{N}}}^{\frac{1-p}{2p}} h_{\psi_{\mathcal{N}}} h_{\phi_{\mathcal{N}}}^{\frac{1-p}{2p}} \right\|_{L_p(\mathcal{N}, \phi_{\mathcal{N}})}
	\leq  \left\| h_{\phi}^{\frac{1-p}{2p}} h_{\psi} h_{\phi}^{\frac{1-p}{2p}} \right\|_{L_p(\mathcal{M}, \phi)},
	\end{equation}
	for $1< p < \infty.$
	
	Recall that $\mathcal{M}_*=L_1(\mathcal{M}, \phi)$. Then for any $x = h_\omega \in L_1(\mathcal{M}, \phi)$ with $\omega\in \mathcal{M}_*$, define a map $T: L_1(\mathcal{M}, \phi) \to L_1(\mathcal{N}, \phi_{\mathcal{N}})$ by
	$$T (x):= h_{\omega_\mathcal{N}}.$$
	It is easy to see that $T$ is positive and $\tr$-preserving on $L_1(\mathcal{M}, \phi)$. Consider the map $S:  L_1(\mathcal{M}, \phi) \to L_1(\mathcal{N}, \phi_{\mathcal{N}})$ given by
	$$S (x) =h_{\phi_{\mathcal{N}}}^{-\frac{1}{2}} T \left( h_{\phi}^{\frac{1}{2}} x h_{\phi}^{\frac{1}{2}} \right) h_{\phi_{\mathcal{N}}}^{-\frac{1}{2}}.$$
	Clearly $S \left(h_{\phi}^{-\frac{1}{2}} h_{\psi} h_{\phi}^{-\frac{1}{2}} \right) = h_{\phi_{\mathcal{N}}}^{-\frac{1}{2}} h_{\psi_{\mathcal{N}}} h_{\phi_{\mathcal{N}}}^{-\frac{1}{2}}.$ Using Corollary \ref{cor:RT}, to show \eqref{eq:DPI} it is sufficient to prove that
	$$\|S\|_{(p, \phi) \to (p, \phi_\mathcal{N})} \leq 1,$$
	for $p =1$ and $p=\infty.$
	
	For $p= 1$, since $T$ is $\tr$-preserving, we have
	\begin{equation}\label{eq:L1}
	\| S (x) \|_{1, \phi_{\mathcal{N}}} = \left\| T \left( h_{\phi}^{\frac{1}{2}} x h_{\phi}^{\frac{1}{2}} \right) \right\|_{1, \phi_{\mathcal{N}}}
	= \left\|  h_{\phi}^{\frac{1}{2}} x h_{\phi}^{\frac{1}{2}}  \right\|_{L_1(\mathcal{M}, \phi)} = \|x\|_{1, \phi}.
	\end{equation}
	
	For $p =\infty,$ using the positivity of $T$, we have
	$$\|S\|_{(\infty, \phi) \to (\infty, \phi_\mathcal{N})}
	=\|S(1)\|_{\mathcal{N}}
	=\|h_{\phi_{\mathcal{N}}}^{-\frac{1}{2}}  h_{\phi_{\mathcal{N}}} h_{\phi_{\mathcal{N}}}^{-\frac{1}{2}}\|_{\mathcal{N}}
	=1.\qedhere$$
\end{proof}

\begin{remark}
	It is known that \cite{FL2013} the sandwiched $p$-R\'enyi divergence satisfies DPI for $p\in(\frac{1}{2},1)\cup(1,\infty)$. DPI for a more general family of $p$-R\'enyi divergences, knowns as \textit{$\alpha$-$z$ R\'enyi relative entropies}, was determined by the last author in \cite{Z2018}. See also a survey paper \cite{CFL2018} for more information of DPI.
\end{remark}

Suppose $\mathcal{N}$ is invariant under the automorphism group $\sigma_t^\phi,$ i.e., $\sigma_t^\phi (\mathcal{N}) = \mathcal{N}$ for all $t\in\mathbb{R}$. Then due to the modular theory \cite{Takesaki}, there exists a  conditional expectation $\mathcal{E}: \mathcal{M} \to \mathcal{N}$ such that $\phi \circ \mathcal{E} = \phi.$ Moreover \cite[Theorem 4.1]{HJX2010}, $\mathcal{E}$ can be naturally extended to a conditional expectation  $\hat{\mathcal{E}}: \mathcal{R} \to \mathcal{R_N}$ satisfying
\begin{equation}\label{eq:trace-preserve}
\tau_{\mathcal{N}} [\hat{\mathcal{E}}(y)] = \tau (y),
\end{equation}
for any $ y \in \mathcal{R}.$

Denote by $\iota:\mathcal{R_N} \to \mathcal{R}$ the natural embedding.
By \eqref{eq:trace-preserve}, we have
\begin{equation}\label{eq:5.14}
\tau [\iota (x) y] = \tau_{\mathcal{N}} [\hat{\mathcal{E}}(\iota (x) y)] = \tau_{\mathcal{N}} [ \hat{\mathcal{E}} (x y)] = \tau_{\mathcal{N}} [x \hat{\mathcal{E}} (y)],
\end{equation}
for any $x \in\mathcal{R_N}, y \in \mathcal{R}$.

\begin{proposition}\label{prop:expectation}
	Let $\mathcal{N}$ be a von Neumann subalgebra of $\mathcal{M}$ and $\psi, \phi$ be n.f. states on $\mathcal{M}.$ Suppose that $\sigma_t^\phi (\mathcal{N}) =\mathcal{N}$ for all $t\in\mathbb{R}$, and let $\mathcal{E}:\mathcal{M}\to\mathcal{N}$ be the $\phi$-preserving conditional expectation. Then we have
	\begin{equation*}
	\hat{\mathcal{E}} (h_{\psi}) = h_{\psi_{\mathcal{N}}}.
	\end{equation*}
\end{proposition}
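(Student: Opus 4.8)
The plan is to identify $\hat{\mathcal{E}}(h_\psi)$ by pairing it against positive elements of $\mathcal{R}_{\mathcal{N}}$ and then invoking uniqueness of the Radon--Nikodym derivative with respect to the canonical trace $\tau_{\mathcal{N}}$. First I would collect the two structural facts that drive the computation. Since $\sigma_t^\phi(\mathcal{N}) = \mathcal{N}$, Takesaki's theorem gives $\sigma_t^\phi|_{\mathcal{N}} = \sigma_t^{\phi_{\mathcal{N}}}$, so $\mathcal{R}_{\mathcal{N}} = \mathcal{N}\rtimes_{\sigma^{\phi_{\mathcal{N}}}}\mathbb{R}$ sits inside $\mathcal{R}$ via $\iota$ compatibly with the generators $\pi(\cdot),\lambda(\cdot)$; hence the dual actions match, $\hat{\sigma}_t^\phi\circ\iota = \iota\circ\hat{\sigma}_t^{\phi_{\mathcal{N}}}$, and (from the construction of $\hat{\mathcal{E}}$ in \cite{HJX2010}) $\hat{\mathcal{E}}$ commutes with the dual actions, so it carries $L_1(\mathcal{M},\phi)$ into $L_1(\mathcal{N},\phi_{\mathcal{N}})$ and $\hat{\mathcal{E}}(h_\psi)$ is a well-defined positive element there. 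Combining the dual-weight formula $\hat{\psi}(y) = \psi[\int_{\mathbb{R}}\hat{\sigma}_t^\phi(y)\,dt]$ with the previous identity and with $\psi|_{\mathcal{N}} = \psi_{\mathcal{N}}$ then yields $\hat{\psi}\circ\iota = \widehat{\psi_{\mathcal{N}}}$ on $\mathcal{R}_{\mathcal{N}}^+$ (note that $\int_{\mathbb{R}}\hat{\sigma}_t^{\phi_{\mathcal{N}}}(x)\,dt$ lies in the fixed-point algebra $\cong\mathcal{N}$, on which $\psi\circ\iota$ restricts to $\psi|_{\mathcal{N}}$).

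With these in hand, for $x\in\mathcal{R}_{\mathcal{N}}^+$ I would compute, using \eqref{eq:5.14}, the trace property of $\tau$, the defining relation $\tau(h_\psi\,\cdot\,)=\hat{\psi}(\cdot)$ (understood as $\tau(h_\psi^{1/2}\,\cdot\,h_\psi^{1/2})$), and the identity $\hat{\psi}\circ\iota=\widehat{\psi_{\mathcal{N}}}$:
\begin{equation*}
\tau_{\mathcal{N}}\!\left(x^{1/2}\hat{\mathcal{E}}(h_\psi)x^{1/2}\right)
= \tau\!\left(\iota(x)\,h_\psi\right)
= \tau\!\left(h_\psi^{1/2}\iota(x)h_\psi^{1/2}\right)
= \hat{\psi}(\iota(x))
= \widehat{\psi_{\mathcal{N}}}(x)
= \tau_{\mathcal{N}}\!\left(x^{1/2} h_{\psi_{\mathcal{N}}} x^{1/2}\right).
\end{equation*}
Thus the normal weights $\tau_{\mathcal{N}}(\hat{\mathcal{E}}(h_\psi)\,\cdot\,)$ and $\tau_{\mathcal{N}}(h_{\psi_{\mathcal{N}}}\,\cdot\,)=\widehat{\psi_{\mathcal{N}}}$ on $\mathcal{R}_{\mathcal{N}}$ coincide, and by uniqueness of the Radon--Nikodym derivative with respect to $\tau_{\mathcal{N}}$ we conclude $\hat{\mathcal{E}}(h_\psi)=h_{\psi_{\mathcal{N}}}$.

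The step I expect to be the main obstacle is that $h_\psi$ is unbounded --- only affiliated with $\mathcal{R}$, and not in $L_1(\mathcal{R},\tau)$ --- so $\hat{\mathcal{E}}$ together with \eqref{eq:5.14}, the cyclicity of $\tau$, and the dual-weight identity must be pushed from bounded operators to this positive affiliated operator. This is handled by the routine device of replacing $h_\psi$ by the increasing net $h_\psi e_n$, where $(e_n)$ are the spectral projections of $h_\psi$, applying the bounded versions of all the identities, and passing to the limit using normality of $\tau$, $\tau_{\mathcal{N}}$, $\hat{\mathcal{E}}$ and monotone convergence for normal weights. Everything else reduces to formal bookkeeping with the dual-weight formula and Takesaki's characterisation of $\sigma^\phi$-invariant subalgebras.
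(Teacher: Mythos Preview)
Your proof is correct and follows essentially the same route as the paper's: pair $\hat{\mathcal{E}}(h_\psi)$ against positive $x\in\mathcal{R}_{\mathcal{N}}$, use \eqref{eq:5.14} together with the dual-weight formula and the compatibility of the dual actions to reduce to $\widehat{\psi_{\mathcal{N}}}(x)=\tau_{\mathcal{N}}(h_{\psi_{\mathcal{N}}}x)$, and conclude by faithfulness of $\tau_{\mathcal{N}}$. The only cosmetic difference is that you isolate the identity $\hat{\psi}\circ\iota=\widehat{\psi_{\mathcal{N}}}$ before the main computation whereas the paper expands it inline via the integral $\int\hat{\sigma}_t^\phi(x)\,dt$, and you add a careful remark about handling the unboundedness of $h_\psi$ that the paper leaves implicit.
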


\begin{proof}
	Since $\phi \circ \mathcal{E} = \phi$, we have 
	$\mathcal{E} \circ \sigma_t^{\phi} = \sigma_t^{\phi_{\mathcal{N}}} \circ \mathcal{E}.$
	Thus by a result of Haagerup, Junge and Xu \cite[Theorem 4.1]{HJX2010}, we have $\hat{\mathcal{E}}\circ\hat{\sigma}_{t}^{\phi}= \hat{\sigma}_t^{{\phi_{\mathcal{N}}}}\circ\hat{\mathcal{E}}$. It follows immediately that
	\begin{equation*}
	\hat{\sigma}_t^{\phi} |_\mathcal{N}= \hat{\sigma}_t^{{\phi_{\mathcal{N}}}}.
	\end{equation*}
	Therefore for any $ x \in \mathcal{R_N}, x \geq 0,$
	\begin{equation*}
	\begin{split}
	\tau_{\mathcal{N}} [ \hat{\mathcal{E}} (h_{\psi})  x]  
	& = \tau (h_{\psi} x) \\
	&= \psi  \left[ \int_{-\infty}^{\infty} \hat{\sigma}_t^{\phi} (x) dt \right]  = \psi  \left[ \int_{-\infty}^{\infty} \hat{\mathcal{E}} \circ \hat{\sigma}_t^{{\phi_{\mathcal{N}}}} (x) dt \right] \\
	&=  \psi_{\mathcal{N}}  \left[ \int_{-\infty}^{\infty} \hat{\sigma}_t^{{\phi_{\mathcal{N}}}} (x) dt \right] = \tau_{\mathcal{N}} [h_{\psi_{\mathcal{N}}}  x].
	\end{split}
	\end{equation*}
	This completes our proof, since $\tau_\mathcal{N}$ is faithful.
\end{proof}

\begin{proposition}\label{prop:embed}
	Let $\mathcal{N}$ be a von Neumann subalgebra of $\mathcal{M}$ and $\psi, \phi$ be n.f. states on $\mathcal{M}.$ Suppose that $\sigma_t^\phi (\mathcal{N}) = \mathcal{N}$ for all $t\in\mathbb{R}$, and let $\mathcal{E}:\mathcal{M}\to\mathcal{N}$ be the $\phi$-preserving conditional expectation. Then we have 
	\begin{equation*}
	\iota (h_{\psi_{\mathcal{N}}}) = h_{\psi_{\mathcal{N}} \circ \mathcal{E}}.
	\end{equation*}
\end{proposition}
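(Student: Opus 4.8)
The plan is to verify the asserted identity through the defining property of the Radon--Nikodym derivative, namely that for a normal positive functional $\omega$ on $\mathcal{M}$, $h_\omega$ is the unique positive operator affiliated with $\mathcal{R}$ with $\tau(h_\omega\,\cdot)=\hat\omega$ on $\mathcal{R}^+$. Note first that $\psi_{\mathcal{N}}\circ\mathcal{E}$ is a normal state on $\mathcal{M}$ (since $\mathcal{E}$ is normal). Since $\tau$ is faithful, it then suffices to prove
\begin{equation*}
\tau\bigl(\iota(h_{\psi_{\mathcal{N}}})\,z\bigr)=\widehat{\psi_{\mathcal{N}}\circ\mathcal{E}}(z),\qquad z\in\mathcal{R}^+,
\end{equation*}
where the left-hand side is read, as usual, as the normal weight $z\mapsto\tau\bigl(\iota(h_{\psi_{\mathcal{N}}})^{1/2}z\,\iota(h_{\psi_{\mathcal{N}}})^{1/2}\bigr)$.

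I would first reduce the left-hand side to a computation inside $\mathcal{R}_{\mathcal{N}}$. Recall from the proof of Proposition~\ref{prop:expectation} that $\sigma_t^{\phi_{\mathcal{N}}}=\sigma_t^{\phi}|_{\mathcal{N}}$, so $\mathcal{R}_{\mathcal{N}}$ is canonically a von Neumann subalgebra of $\mathcal{R}$ and $\hat\sigma_t^{\phi}|_{\mathcal{R}_{\mathcal{N}}}=\hat\sigma_t^{\phi_{\mathcal{N}}}$; in particular $\iota(h_{\psi_{\mathcal{N}}})\in L_1(\mathcal{M},\phi)$. Applying \eqref{eq:5.14} with $x$ the bounded truncation $e_n h_{\psi_{\mathcal{N}}} e_n$, where $e_n:=\chi_{[0,n]}(h_{\psi_{\mathcal{N}}})\in\mathcal{R}_{\mathcal{N}}$, together with the cyclicity of $\tau$, and then letting $n\to\infty$ using normality of $\tau$ and $\tau_{\mathcal{N}}$ (monotone convergence), I obtain
\begin{equation*}
\tau\bigl(\iota(h_{\psi_{\mathcal{N}}})\,z\bigr)=\tau_{\mathcal{N}}\bigl(h_{\psi_{\mathcal{N}}}\,\hat{\mathcal{E}}(z)\bigr)=\widehat{\psi_{\mathcal{N}}}\bigl(\hat{\mathcal{E}}(z)\bigr),\qquad z\in\mathcal{R}^+,
\end{equation*}
the last equality being the defining property of $h_{\psi_{\mathcal{N}}}$ inside $\mathcal{R}_{\mathcal{N}}$.

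It then remains to identify $\widehat{\psi_{\mathcal{N}}}\circ\hat{\mathcal{E}}$ with $\widehat{\psi_{\mathcal{N}}\circ\mathcal{E}}$ on $\mathcal{R}^+$. For this I would unfold both dual weights through the canonical operator-valued integral: $\widehat{\psi_{\mathcal{N}}}(y)=\psi_{\mathcal{N}}\bigl[\int_{-\infty}^{\infty}\hat\sigma_t^{\phi_{\mathcal{N}}}(y)\,dt\bigr]$ for $y\in\mathcal{R}_{\mathcal{N}}^+$, and $\widehat{\psi_{\mathcal{N}}\circ\mathcal{E}}(z)=(\psi_{\mathcal{N}}\circ\mathcal{E})\bigl[\int_{-\infty}^{\infty}\hat\sigma_t^{\phi}(z)\,dt\bigr]$ for $z\in\mathcal{R}^+$. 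The proof of Proposition~\ref{prop:expectation} supplies exactly the two ingredients I need: $\hat{\mathcal{E}}\circ\hat\sigma_t^{\phi}=\hat\sigma_t^{\phi_{\mathcal{N}}}\circ\hat{\mathcal{E}}$ for all $t$, and $\hat{\mathcal{E}}$ restricts to $\mathcal{E}$ on $\mathcal{M}$ by \cite[Theorem~4.1]{HJX2010}, the latter extending by normality to the extended positive part of $\mathcal{M}$, where the integral takes values. Commuting the normal map $\hat{\mathcal{E}}$ past the integral (understood as an increasing limit over compact intervals), one gets for $z\in\mathcal{R}^+$
\begin{equation*}
\int_{-\infty}^{\infty}\hat\sigma_t^{\phi_{\mathcal{N}}}\bigl(\hat{\mathcal{E}}(z)\bigr)\,dt=\int_{-\infty}^{\infty}\hat{\mathcal{E}}\bigl(\hat\sigma_t^{\phi}(z)\bigr)\,dt=\hat{\mathcal{E}}\!\left(\int_{-\infty}^{\infty}\hat\sigma_t^{\phi}(z)\,dt\right)=\mathcal{E}\!\left(\int_{-\infty}^{\infty}\hat\sigma_t^{\phi}(z)\,dt\right),
\end{equation*}
and applying $\psi_{\mathcal{N}}$ yields $\widehat{\psi_{\mathcal{N}}}(\hat{\mathcal{E}}(z))=(\psi_{\mathcal{N}}\circ\mathcal{E})\bigl[\int_{-\infty}^{\infty}\hat\sigma_t^{\phi}(z)\,dt\bigr]=\widehat{\psi_{\mathcal{N}}\circ\mathcal{E}}(z)$. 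Combined with the previous display this gives the desired equality, hence $\iota(h_{\psi_{\mathcal{N}}})=h_{\psi_{\mathcal{N}}\circ\mathcal{E}}$.

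The main obstacle is purely technical: one must make rigorous sense of $\tau(\iota(h_{\psi_{\mathcal{N}}})z)$ and of $\hat{\mathcal{E}}$ applied to the generally unbounded positive element $\int_{-\infty}^{\infty}\hat\sigma_t^{\phi}(z)\,dt$ of the extended positive part of $\mathcal{M}$, and justify interchanging $\hat{\mathcal{E}}$ with the improper integral. All of this is handled by the standard truncation-and-normality arguments indicated above, so nothing genuinely new is required beyond what already appears in the proof of Proposition~\ref{prop:expectation}.
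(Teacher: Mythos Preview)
Your proof is correct and follows essentially the same route as the paper's: both arguments verify the identity by testing against positive $y\in\mathcal{R}$, use the intertwining $\hat{\mathcal{E}}\circ\hat\sigma_t^{\phi}=\hat\sigma_t^{\phi_{\mathcal{N}}}\circ\hat{\mathcal{E}}$ from \cite[Theorem~4.1]{HJX2010} to pass between $\widehat{\psi_{\mathcal{N}}\circ\mathcal{E}}$ and $\widehat{\psi_{\mathcal{N}}}\circ\hat{\mathcal{E}}$, and invoke \eqref{eq:5.14} to identify $\tau_{\mathcal{N}}(h_{\psi_{\mathcal{N}}}\hat{\mathcal{E}}(y))$ with $\tau(\iota(h_{\psi_{\mathcal{N}}})y)$. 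The only difference is cosmetic: the paper runs the chain starting from $\tau(h_{\psi_{\mathcal{N}}\circ\mathcal{E}}\,y)$ while you start from the other end, and you spell out the truncation/normality justifications that the paper leaves implicit.
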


\begin{proof}
	Since $\mathcal{E} \circ \sigma_t^{\phi} = \sigma_t^{\phi_{\mathcal{N}}} \circ \mathcal{E}$, we have by \cite[Theorem 4.1]{HJX2010} that
	\begin{equation*}
	\hat{\mathcal{E}}  \circ \hat{\sigma}_t^{\phi} = \hat{\sigma}_t^{{\phi_{\mathcal{N}}}} \circ \hat{\mathcal{E}}.
	\end{equation*}
	Therefore for any $ y \in  \mathcal{R}, y \geq 0,$
	\begin{equation*}
	\begin{split}
	\tau [h_{\psi_{\mathcal{N}} \circ \mathcal{E}} y] & = \psi_{\mathcal{N}} \circ \mathcal{E}  \left[ \int_{-\infty}^{\infty} \hat{\sigma}_t^{\phi} (y) dt \right]  \\
	&= \psi_{\mathcal{N}}  \left[ \int_{-\infty}^{\infty}  \hat{\mathcal{E}} \circ \hat{\sigma}_t^{\phi} (y) dt \right]  \\
	&= \psi_{\mathcal{N}}  \left[ \int_{-\infty}^{\infty} \hat{\sigma}_t^{{\phi_{\mathcal{N}}}}  \circ \hat{\mathcal{E}} (y) dt \right]  \\
	& = \tau_{\mathcal{N}}  [h_{\psi_{\mathcal{N}}} \hat{\mathcal{E}}(y)] =  \tau  [ \iota (h_{\psi_{\mathcal{N}}} )(y)],
	\end{split}
	\end{equation*}
	where the last equality uses \eqref{eq:5.14}. This completes our proof, since $\tau$ is faithful.
\end{proof}

\begin{theorem}\label{thm:sufficient}
	Let $\mathcal{N}$ be a von Neumann subalgebra of $\mathcal{M}$ and $\psi, \phi$ be n.f. states on $\mathcal{M}.$ Suppose that $\sigma_t^\phi (\mathcal{N}) = \mathcal{N}$, and $\mathcal{E}:\mathcal{M}\to\mathcal{N}$ is the induced conditional expectation. If $\psi_{\mathcal{N}} \circ \mathcal{E} = \psi$, then we have
	\begin{equation*}
	\tilde{D}_p  (\psi_{\mathcal{N}} \| \phi_{\mathcal{N}} ) = \tilde{D}_p  (\psi \| \phi ),~~p\in(0,1)\cup(1,\infty).
	\end{equation*}
\end{theorem}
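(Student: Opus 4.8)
The plan is to use the hypothesis $\psi_{\mathcal{N}}\circ\mathcal{E}=\psi$ to pull the whole computation back into the single Haagerup space $L_p(\mathcal{N},\phi_{\mathcal{N}})$. By Proposition \ref{prop:embed} (applied to the state $\psi$) we have $\iota(h_{\psi_{\mathcal{N}}})=h_{\psi_{\mathcal{N}}\circ\mathcal{E}}=h_\psi$, and applying the same proposition with $\psi$ replaced by the distinguished state $\phi$ — and using that $\mathcal{E}$ is $\phi$-preserving, so $\phi_{\mathcal{N}}\circ\mathcal{E}=\phi$ — we get $\iota(h_{\phi_{\mathcal{N}}})=h_{\phi_{\mathcal{N}}\circ\mathcal{E}}=h_\phi$. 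In view of the definition \eqref{eq:relative-entropy}, the theorem therefore reduces to the single identity
\begin{equation*}
\left\| h_\phi^{\frac{1-p}{2p}} h_\psi h_\phi^{\frac{1-p}{2p}} \right\|_{L_p(\mathcal{M},\phi)}
= \left\| h_{\phi_{\mathcal{N}}}^{\frac{1-p}{2p}} h_{\psi_{\mathcal{N}}} h_{\phi_{\mathcal{N}}}^{\frac{1-p}{2p}} \right\|_{L_p(\mathcal{N},\phi_{\mathcal{N}})},\qquad p\in(0,1)\cup(1,\infty),
\end{equation*}
after which one applies $\frac{1}{p-1}\log(\cdot)^p$ to both sides.

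The key step is to show that $\iota:\mathcal{R_N}\to\mathcal{R}$ extends to an isometric embedding $L_p(\mathcal{N},\phi_{\mathcal{N}})\hookrightarrow L_p(\mathcal{M},\phi)$ for every $0<p\le\infty$; this extension is moreover multiplicative and commutes with the measurable functional calculus. Three ingredients enter. First, as established in the proof of Proposition \ref{prop:expectation}, $\hat{\sigma}_t^\phi|_{\mathcal{R_N}}=\hat{\sigma}_t^{\phi_{\mathcal{N}}}$, so $\iota$ carries the eigenspace defining $L_p(\mathcal{N},\phi_{\mathcal{N}})$ in \eqref{eq:defn of Haagerup Lp space} into the one defining $L_p(\mathcal{M},\phi)$. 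Second, $\iota$ is a normal $\ast$-isomorphism onto the von Neumann subalgebra $\iota(\mathcal{R_N})\subseteq\mathcal{R}$, which by \cite[Theorem 4.1]{HJX2010} carries a $\tau$-preserving conditional expectation; hence $\iota$ extends to a $\ast$-homomorphism between the associated algebras of measurable operators (mapping spectral projections to spectral projections), commuting with Borel functional calculus and in particular with $x\mapsto|x|^p$, and sending $\tau_{\mathcal{N}}$-measurable operators to $\tau$-measurable operators. Third, $\tau\circ\iota=\tau_{\mathcal{N}}$ on $\mathcal{R_N}$ by \eqref{eq:trace-preserve}; combined with $\iota(h_\omega)=h_{\omega\circ\mathcal{E}}$ and $\mathcal{E}(1)=1$ this yields $\tr\circ\iota=\tr$ on $L_1(\mathcal{N},\phi_{\mathcal{N}})$, so that for $x\in L_p(\mathcal{N},\phi_{\mathcal{N}})$ we obtain from \eqref{eq:defn of haagerup lp norm}
$$\|\iota(x)\|_p^p=\tr\bigl(|\iota(x)|^p\bigr)=\tr\bigl(\iota(|x|^p)\bigr)=\tr\bigl(|x|^p\bigr)=\|x\|_p^p.$$

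Granting this, set $a:=h_{\phi_{\mathcal{N}}}^{\frac{1-p}{2p}}h_{\psi_{\mathcal{N}}}h_{\phi_{\mathcal{N}}}^{\frac{1-p}{2p}}\in L_p(\mathcal{N},\phi_{\mathcal{N}})$. Since $\iota$ is multiplicative and commutes with the (possibly unbounded, negative) powers of the positive operator $h_{\phi_{\mathcal{N}}}$, and $\iota(h_{\phi_{\mathcal{N}}})=h_\phi$, $\iota(h_{\psi_{\mathcal{N}}})=h_\psi$, we get $\iota(a)=h_\phi^{\frac{1-p}{2p}}h_\psi h_\phi^{\frac{1-p}{2p}}$, whence $\|h_\phi^{\frac{1-p}{2p}}h_\psi h_\phi^{\frac{1-p}{2p}}\|_{L_p(\mathcal{M},\phi)}=\|a\|_{L_p(\mathcal{N},\phi_{\mathcal{N}})}$; this is the displayed identity, and the theorem follows. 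The main obstacle is the middle paragraph: one must verify carefully that $\iota$, which is a priori defined only on the bounded algebra $\mathcal{R_N}$, genuinely transports the unbounded building blocks $h_{\phi_{\mathcal{N}}}^{(1-p)/(2p)}$ and their products faithfully into $L_0(\mathcal{R},\tau)$ and does so isometrically on each $L_p$. Once the trace relation $\tau\circ\iota=\tau_{\mathcal{N}}$ and the intertwining $\hat{\sigma}_t^\phi|_{\mathcal{R_N}}=\hat{\sigma}_t^{\phi_{\mathcal{N}}}$ are in place, this is a routine extension argument within the theory of Haagerup $L_p$-spaces, and, unlike the data processing inequality above, it produces equality for all $p\in(0,1)\cup(1,\infty)$ at once.
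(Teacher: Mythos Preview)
Your proposal is correct and follows essentially the same route as the paper: both reduce the claim to the norm identity via Proposition~\ref{prop:embed} (giving $\iota(h_{\psi_{\mathcal{N}}})=h_\psi$ and $\iota(h_{\phi_{\mathcal{N}}})=h_\phi$) together with the fact that $\iota$ is an isometric, multiplicative embedding of $L_p(\mathcal{N},\phi_{\mathcal{N}})$ into $L_p(\mathcal{M},\phi)$. The only difference is cosmetic: the paper simply cites \cite{HJX2010} for the isometric embedding and writes the chain $\|\,\cdot\,\|_{L_p(\mathcal{N},\phi_{\mathcal{N}})}=\|\iota(\,\cdot\,)\|_{L_p(\mathcal{M},\phi)}$ directly, whereas you unpack the justification (dual action intertwining, trace compatibility, functional calculus) in more detail.
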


\begin{proof}
	Obviously we have $\phi_{\mathcal{N}} \circ \mathcal{E} = \phi \circ \mathcal{E} = \phi.$ Thus using Proposition \ref{prop:embed} and the fact that $\iota$ naturally embeds $L_p(\mathcal{N}, \phi_{\mathcal{N}})$ into $L_p(\mathcal{M}, \phi)$ for all $0< p < \infty$ \cite{HJX2010}, we have
	\begin{equation*}
	\begin{split}
	\left\| h_{\phi_{\mathcal{N}}}^{\frac{1-p}{2p}} h_{\psi_{\mathcal{N}}} h_{\phi_{\mathcal{N}}}^{\frac{1-p}{2p}} \right\|_{L_p(\mathcal{N}, \phi_{\mathcal{N}})} 
	& = \left\| \iota \left( h_{\phi_{\mathcal{N}}}^{\frac{1-p}{2p}} h_{\psi_{\mathcal{N}}} h_{\phi_{\mathcal{N}}}^{\frac{1-p}{2p}} \right) \right\|_{L_p(\mathcal{M}, \phi)}\\
	& = \left\| h_{\phi_{\mathcal{N}} \circ \mathcal{E}}^{\frac{1-p}{2p}} h_{\psi_{\mathcal{N}} \circ \mathcal{E}} h_{\phi_{\mathcal{N}} \circ \mathcal{E}}^{\frac{1-p}{2p}} \right\|_{L_p(\mathcal{M}, \phi)}\\
	& =  \left\| h_{\phi}^{\frac{1-p}{2p}} h_{\psi} h_{\phi}^{\frac{1-p}{2p}} \right\|_{L_p(\mathcal{M}, \phi)}.\qedhere
	\end{split}
	\end{equation*}
\end{proof}


\subsection*{Acknowledgements}
We would like to thank Quanhua Xu and Simeng Wang for helpful comments and discussions. JG and ZY are partially supported by NSFC No. 11771106, No. 11431011 and No. 11826012. HZ is partially supported by the French project ISITE-BFC (contract ANR-15-IDEX-03) and the NCN (National Centre of Science) grant 2014/14/E/ST1/00525.


\normalsize


\begin{thebibliography}{1}



\baselineskip=17pt


	
	\bibitem{BL1976} J. Bergh, J. L\"{o}fstr\"{o}m, 
	\emph{Interpolation spaces: an introduction},
	Springer (1976).
	
	\bibitem{B2010} R. Bhatia, 
	\emph{Matrix analysis},
	Springer (2010).
	
	\bibitem{Beigi2013} S. Beigi, 
	\emph{Sandwiched R\'enyi divergence satisfies data processing inequality},
	J. Math. Phys. \textbf{54} (2013) 122202.
	
	\bibitem{BL2006} D. P. Blecher, L. E. Labuschagne, 
	\emph{Applications of the Fuglede-Kadison Determinant: Szeg\"{o}'s Theorem and Outers for Noncommutative $H^{p}$},
	Trans. Amer. Math. Soc. \textbf{360} (2008) 6131-6147.
	
	\bibitem{BX2007} T. N. Bekjan, Q. Xu, 
	\emph{Riesz and Szeg\"{o} type factorizations for noncommutative Hardy spaces},
	J. Operat. Theor. \textbf{62} (2009) 215-231.
	
	\bibitem{BC2012} T. N. Bekjan, Z. Chen, 
	\emph{Interpolation and $\Phi$-moment inequalities of noncommutative martingales}, 
	Probab Theory Relat Fields. \textbf{152} (2012) 179-206.
	
	\bibitem{BCO2016} T. N. Bekjan, Z. Chen, A. Osekowski, 
	\emph{Noncommutative maximal inequalities associated with convex functions},
	Trans. Amer. Math. Soc. \textbf{369} (2017) 409-427.
	
	\bibitem{BO2017} T. N. Bekjan, K. N. Ospanov, 
	\emph{Complex interpolation of noncommutative Hardy spaces associated semifinite von Neumann algebra},
	arXiv:1711.01532 (2017).
	
	\bibitem{BST2016} M. Berta, V. B. Scholz, M. Tomamichel, 
	\emph{R\'{e}nyi divergences as weighted non-commutative vector valued $L_p$ spaces}, 
	Ann. Henri Poincaré. \textbf{19} (2018) 1843-1867.
	
	\bibitem{CFL2018} E. A. Carlen, R. L. Frank, E. H. Lieb. 
	\emph{Inequalities for quantum divergences and the Audenaert-Datta conjecture}, 
	J. Phys. A. \textbf{51} (2018) 483001.
	
	\bibitem{Dixmier1953} J. Dixmier. 
	\emph{Formes lin\'eaires sur un anneau d'op\'erateurs},
	Bull. Soc. Math. France, \textbf{81} (1953) 9--39.
	
	\bibitem{FL2013} R. L. Frank, E. H. Lieb. 
	\emph{Monotonicity of a relative {R}\'{e}nyi entropy},
	J. Math. Phys. \textbf{54} (2013) 122201.
	
	\bibitem{Hir1952} I. I. Hirschman, 
	\emph{A convexity theorem for certain groups of transformations},
	J. Anal. Math. \textbf{2} (1952) 209-218.
	
	\bibitem{HJX2010} U. Haagerup, M. Junge, Q. Xu, 
	\emph{A reduction method for noncommutative $L_p$ spaces and applications},
	Trans. Amer. Math. Soc. \textbf{362} (2010) 2125-2165.
	
	\bibitem{H1979} U. Haagerup, 
	\emph{``$L_p$ spaces associated with an arbitrary von Neumann algebra'' in Alg\'{e}bres d'op\'{e}rateurs et leurs applications en physique math\'{e}matique (Marseille, 1977)},
	Proc. Colloq. CNRS 274, CNRS, Paris, (1979) 175-184.
	
	\bibitem{HKT2017} F. Hiai, R. K\"{o}nig, M. Tomamichel, 
	\emph{Generalized log-majorization and multivariate trace inequalities. }
	Ann. Henri Poincar\'e. \textbf{18} (2017), 2499-2521.
	
	\bibitem{JX2003} M. Junge, Q. Xu, 
	\emph{Noncommutative Burkholder/Rosenthal inequalities},
	Ann. Probab. \textbf{31} (2003) 948-995.
	
	\bibitem{JX2007} M. Junge, Q. Xu, 
	\emph{Noncommutative maximal ergodic theorems},
	J. Amer. Math. Soc. \textbf{20} (2007) 385-439.
	
	\bibitem{JRSWW2018} M. Junge, R. Renner, D. Sutter, M. M. Wilde, A. Winter, \emph{Universal recovery from a decrease of quantum relative entropy},
	Ann. Henri Poincar\'e. \textbf{19} (2018) 2955-2978.
	
	\bibitem{Jencova2017} A. Jen\v{c}ov\'a, 
	\emph{R\'enyi relative entropies and noncommutative $L_p$ spaces II}, arXiv:1707.00047 (2017).
	
	\bibitem{Jencova2018} A. Jen\v{c}ov\'a, 
	\emph{R\'enyi relative entropies and noncommutative $L_p$ spaces}, 
	Ann. Henri Poinca\'re.  \textbf{19} (2018) 2513-2542.
	
	\bibitem{K1984} H. Kosaki, 
	\emph{Applications of the complex interpolation method to a von Neumann algebra: non-commutative $L_p$ spaces},
	J. Funct. Anal. \textbf{56} (1984) 26-78.
	
	\bibitem{KT2013} M. J. Kastoryano, K. Temme, 
	\emph{Quantum logarithmic Sobolev inequalities and rapid mixing},
	J. Math. Phys. \textbf{54} (2013) 052202.
	
	\bibitem{M2007} T. Mei, 
	\emph{Operator valued Hardy spaces},
	Mem. Amer. Math. Soc. \textbf{188} (2007).
	
	\bibitem{MDSFT2013} M. M\"{u}ller Lennert, F. Dupuis, O. Szehr, S. Fehr, M. Tomamichel, 
	\emph{On quanum R\'enyi entropies: a new generalization and some properties},
	J. Math. Phys. \textbf{54} (2013) 122203.
	
	\bibitem{OZ1999} R. Olkiewicz, B. Zegarlinski, 
	\emph{Hypercontractivity in noncommutative $L_p$ spaces},
	J. Funct. Anal. \textbf{161} (1999) 246-285.
	
	\bibitem{Petz1986} D. Petz, 
	\emph{Quasi-entropies for finite quantum systems},
	Rep. Math. Phys. \textbf{23} (1986) 57-65.
	
	\bibitem{Petz1993} M. Ohya, D. Petz, 
	\emph{Quantum entropy and its use},
	Springer (2004).
	
	\bibitem{P1992} G. Pisier, 
	\emph{Interpolation between $H_p$ spaces and non-commutative generalizations I}, 
	Pacific J. Math. \textbf{155} (1992) 341-368.
	
	\bibitem{PX1997} G. Pisier, Q. Xu, 
	\emph{Non-commutative martingale inequalities},
	Comm. Math. Phys. \textbf{189} (1997) 667-698.
	
	\bibitem{PX2003}  G. Pisier, Q. Xu, 
	\emph{Noncommutative $L^p$-spaces. {\it Handbook of the geometry of Banach spaces Vol. 2}},
	ed. W.B.Johnson and J.Lindenstrauss, 2003, 1459-1517, North-Holland, Amsterdan.
	
	\bibitem{RX2011} E. Ricard, Q. Xu, 
	\emph{Complex interpolation of weighted noncommutative $L_p$ spaces},
	Houston J. Math. \textbf{37} (2011) 1165-1179.
	
	\bibitem{Segal1953} I. E. Segal. 
	\emph{A non-commutative extension of abstract integration},
	Ann. Math. \textbf{57} (1953) 401--457.
	
	\bibitem{SBT2017} D. Sutter, M. Berta, M. Tomamichel, 
	\emph{Multivariate trace inequalities},
	Commun. Math. Phys. \textbf{352} (2017) 37-58.
	
	\bibitem{T1981} M. Terp, 
	\emph{$L_p$ spaces associated with von Neumann algebras},
	Notes, Math. Institute, Copenhagen Univ. (1981).
	
	\bibitem{Takesaki} M. Takesaki, 
	\emph{Theory of operator algebras. II},
	volume \textbf{125} of Encyclopaedia of Mathematical Sciences. Springer-Verlag, Berlin, (2003). 
	
	\bibitem{X1990} Q. Xu,  
	\emph{Applications du th\'eor\`eme de factorisation pour des fonctions \`a valeurs op\'erateurs},
	Studia Math. \textbf{95} (1990) 273-292.
	
	\bibitem{Z2018} H. Zhang, 
	\emph{Carlen-Frank-Lieb conjecture and monotonicity of $\alpha$-$z$ R\'enyi relative entropy},
	arXiv:1811.01205. (2018)
	
	
\end{thebibliography}
\end{document}